\definecolor{labelkey}{rgb}{0,0.08,0.45}
\definecolor{refkey}{rgb}{0,0.6,0.0}
\definecolor{Brown}{rgb}{0.45,0.0,0.05}
\definecolor{dgreen}{rgb}{0.00,0.49,0.00}
\definecolor{dblue}{rgb}{0,0.08,0.75}
\providecommand{\scalT}[2]{\left\langle{#1},{#2}\right\rangle}
\DeclareMathOperator*{\argmin}{arg\,min}
\newcommand{\prox}[2]{\text{prox}_{#1}(#2)}
\newtheorem{theorem}{Theorem}
\newtheorem{corollary}{Corollary}
\newtheorem{lemma}{Lemma}
\newtheorem{proposition}{Proposition}
\newtheorem{remark}{Remark}
\newtheorem{assumption}{Assumption}
\providecommand{\scalT}[2]{\left\langle{#1},{#2}\right\rangle}
\begin{document}

\title{A Structured Proximal Stochastic Variance Reduced Zeroth-order Algorithm}

\author{Marco Rando\thanks{Malga - DIBRIS, University of Genova, IT
		({\tt marco.rando@edu.unige.it}, {\tt lorenzo.rosasco@unige.it}).}
    \and Cheik Traor\'e\thanks{Department of Mathematics and Computer Science, Saarland University, DE ({\tt cheik.traore@math.uni-sb.de})}
	\and Cesare Molinari\thanks{MaLGa - DIMA, University of Genova, IT 
		({\tt molinari@dima.unige.it}, {\tt silvia.villa@unige.it}).}
	\and Lorenzo Rosasco\footnotemark[1] \thanks{Istituto Italiano di Tecnologia, IT and CBMM - MIT, Cambridge, MA, USA}
	\and Silvia Villa\footnotemark[3]
}

\date{}

\maketitle

\begin{abstract}
\noindent Minimizing finite sums of functions is a central problem in optimization, arising in numerous practical applications. Such problems are commonly addressed using first-order optimization methods. However, these procedures cannot be used in settings where gradient information is unavailable. Finite-difference methods provide an alternative by approximating gradients through function evaluations along a set of directions. For finite-sum minimization problems, it was shown that incorporating variance-reduction techniques into finite-difference methods can improve convergence rates. Additionally, recent studies showed that imposing structure on the directions (e.g., orthogonality) enhances performance. However, the impact of structured directions on variance-reduced finite-difference methods remains unexplored. In this work, we close this gap by proposing a structured variance-reduced finite-difference algorithm for non-smooth finite-sum minimization. We analyze the proposed method, establishing convergence rates for non-convex functions and those satisfying the Polyak-Łojasiewicz condition. Our results show that our algorithm achieves state-of-the-art convergence rates while incurring lower per-iteration costs. Finally, numerical experiments highlight the strong practical performance of our method.%
\end{abstract}
{{\bf Keywords:} Derivative-free Optimization, Black-box Optimization, Stochastic Optimization, Variance reduction.}\\
{{\bf AMS Mathematics Subject Classification:} 90C56, 90C15, 90C25, 90C30.}

\section{Introduction}\label{sec:introduction}

Finite-sum optimization is a class of problems that consist of minimizing the sum of a (typically large) number of functions. These problems are very relevant in many fields such as machine learning and they are typically tackled using first-order methods, which rely on the availability of gradient information. However, in many practical scenarios, such an information is unavailable, and only function values can be accessed. These problems are called black-box finite-sum optimization problems and are particularly important, as they arise in a wide range of real-world applications \cite{bb_univ_pert,var_red_llm,pmlr-v80-ilyas18a,pmlr-v119-huang20j}. An example is the generation of universal adversarial perturbations \cite{Moosavi-Dezfooli_2017_CVPR}, where the goal is to find a small perturbation that causes a trained classifier to misclassify inputs. This involves minimizing an accuracy metric over the inputs, which measures the classifier's confidence in the perturbed outputs. Note that, in the black-box setting, the classifier is unknown and, thus, gradients cannot be computed. Another example is fine-tuning large language models (LLMs). Recently, fine-tuning pre-trained LLMs has become standard in natural language processing \cite{raffel_llms}, but gradient computations pose memory challenges. To address this, several works suggest using black-box optimization methods to bypass the need for backpropagation \cite{mezo,zo_llm_benchmark}. 

\noindent In the literature, various methods have been proposed to address black-box optimization problems - see e.g. \cite{salimans2017evolution,pmlr-v151-rando22a,Totzeck2022,LEWIS2000191,tutorial_BO,nesterov2017random,intro_schein,sartore2024automaticgaintuninghumanoid,pmlr-v80-ilyas18a,pmlr-v97-guo19a,demetrio2021functionality}. The approach we consider is based on finite-difference algorithms. %
These iterative procedures mimic first-order optimization techniques by approximating the gradient of the objective function using finite differences along possibly random directions \cite{nesterov2017random}. Two types of finite-difference methods can be distinguished based on how the directions are generated: unstructured and structured. In the former, directions are sampled i.i.d. from a probability distribution \cite{nesterov2017random,duchi_power_of_two,ghadimi_lan,shamir2017optimal}, while in the latter some structural constraints on the directions (e.g. orthogonality) are imposed  \cite{kozak2021zeroth,Kozak2021,kozak2019stochastic,Rando2024,rando2023optimal,rando2024newformulationzerothorderoptimization,stief_zeroth}. Several works have theoretically and empirically observed that imposing orthogonality on the directions leads to better performance than using unstructured directions \cite{berahas2022theoretical,str_zo_applied,rando2025structuredtouroptimizationfinite}. Intuitively, this improvement comes from the fact that orthogonality prevents the use of similar or redundant directions in the gradient approximation, hence improving the quality of the gradient estimate. 

\noindent For finite-sum problems, finite-difference methods can be used in two ways. The first approach involves approximating the full gradient of the objective function, mimicking gradient descent, while the second approximates gradient of component functions, mimicking stochastic gradient descent. Although the first approach typically converges faster, it is computationally expensive, especially when the objective is the sum of a large number of functions as it would require to perform many function evaluations to compute gradient approximations. In contrast, the second approach requires building cheaper stochastic gradient surrogates, but it requires more iterations to reach convergence. 

\noindent Recently, a third approach balancing the computational cost and convergence speed has been proposed. These algorithms, inspired from variance reduction strategies \cite{svrg_johnson,fang_spider,Traoré2024,saga}, aim to combine the strengths of both approaches by alternating the computations of full and stochastic gradient approximations. Notably, in the gradient-free setting, the majority of these approaches use unstructured finite differences, while the only variance-reduced finite-difference methods with structured directions proposed involve approximating gradients and stochastic gradients using all canonical basis vectors - see e.g. \cite{zo_svrg_avg,liu2018stochastic}. Although these approaches require more function evaluations compared to unstructured estimators, they have been shown to provide more stable results and better performance \cite{Kazemi2024,ji_improved_vr}. However, since each (stochastic) gradient approximation is built using all coordinate directions, this method requires a high number of function evaluations per iteration, making it impractical for many real-world problems. 

\noindent To address this limitation, hybrid approaches have been proposed \cite{ji_improved_vr,mu2024variancereducedgradientestimatornonconvex,Huang_Gu_Huo_Chen_Huang_2019,Kazemi2024}. These methods approximate the full gradient using finite-difference with coordinate directions and stochastic gradients using unstructured approximations, balancing the cost of the approximations with performance. However, no alternatives involving structured directions have been explored. 

\noindent In this work, we close this gap proposing VR-SZD, a structured finite difference algorithm for non-smooth black-box finite-sum minimization. Our algorithm exploits a variance reduction technique and alternates the computations of full-gradient approximations (with coordinate directions) and stochastic gradient approximations built using a set of $\ell \leq d$ orthogonal directions instead of unstructured directions. We analyze our procedure providing convergence rates for non-convex functions achieving a complexity in the terms of function evaluations of $\mathcal{O}(d n^{2/3} \varepsilon^{-1})$ where $d$ is the input dimension and $n$ the number of functions. We then extend the analysis considering the setting of non-convex Polyak-\L{}ojasiewicz functions deriving a rate for the objective function values. Additionally, we compare our algorithm to state-of-the-art approaches in different experiments, showing that VR-SZD outperforms existing methods.
\paragraph*{Contributions.} 
\begin{itemize}
    \item We introduce a novel finite difference algorithm that uses structured directions and a variance reduction technique to improve the efficiency of gradient estimations. 
    \item We provide a rigorous theoretical analysis of the proposed algorithm in optimizing non-convex and non-convex  Polyak-\L{}ojasiewicz targets. 
    \item We provide an empirical comparison between our algorithm and several state-of-the-art optimization methods showing that our algorithm outperforms existing approaches.
\end{itemize}
\noindent The rest of the paper is organized as follows. In Section \ref{sec:problem_setting}, we introduce the problem and describe our algorithm. In Section \ref{sec:main_results}, we state and discuss the main results. In Section \ref{sec:experiments} we provide some numerical experiments and in Section \ref{sec:conclusion} some final remarks.

\section{Problem Setting \& Algorithm}\label{sec:problem_setting}

We consider the problem of minimizing a non-smooth objective $F$ that is expressed as the sum of two functions,
\begin{equation}\label{eqn:problem}
    \min\limits_{x \in \mathbb{R}^d} F(x) := f(x) + h(x), \quad \text{where} \quad f(x) := \frac{1}{n} \sum\limits_{i = 1}^n f_i(x).    
\end{equation}
In particular, the function $f$ is a finite sum of  $n > 0$ differentiable, potentially non-convex functions $\{f_i\}_{i=1}^n$ for which the direct access to the gradients is unavailable. Further, the function $h$ is convex, possibly non-differentiable, and {\it proximable}, meaning that for any  $x \in \mathbb{R}^d$ and $\gamma > 0$, we can compute cheaply
\begin{equation} \label{eqn:prox_operator}
    \prox{\gamma h}{x} := \argmin\limits_{y} \left\{ h(y) + \frac{1}{2\gamma} \| y - x \|^2 \right\}.    
\end{equation}
This setting is well studied \cite{Kazemi2024,Huang_Gu_Huo_Chen_Huang_2019} and is particularly relevant because many real-world optimization problems can be formulated in this way - see e.g. \cite{bb_univ_pert,zo_llm_benchmark}. Notably, the assumptions on $h$ are standard in composite optimization \cite{szd_prox_weakly_conv} and a typical example %
is the indicator function over a convex set, for which the proximal operator (eq. \eqref{eqn:prox_operator}) is the projection onto that set. To address Problem \eqref{eqn:problem}, we propose an iterative procedure that relies solely on stochastic function evaluations and prox computations. At each iteration, our algorithm constructs a search direction based on two types of gradient approximations. The first is a surrogate of the gradient of $f$ and it is computed using finite differences along the canonical basis $(e_j)_{j=1}^d$. Specifically, given a parameter $\beta > 0$, we define it as
\begin{equation}\label{eqn:full_approx}
    g(x, \beta) := \sum\limits_{j = 1}^d \frac{f(x + \beta e_j) - f(x)}{\beta} e_j.    
\end{equation}
The second is an approximation of a stochastic gradient and it is constructed using finite differences along a set of $\ell \leq d$ structured directions. These are represented as orthogonal transformations of the first  $\ell$  canonical basis. Formally, let $i \in [n] := \{1, \dots, n\}$  and $G \in O(d) := \{ G \in \mathbb{R}^{d \times d} \, | \, \det(G) \neq 0 \, \wedge \, G^{-1} = G^\intercal \}$, we define the structured approximation of the stochastic gradient of the function $f_i$ as
\begin{equation}\label{eqn:stochastic_approx}
    \hat{g}_{i}(x, G, \beta) := \frac{d}{\ell} \sum\limits_{j = 1}^\ell \frac{f_i(x + \beta G e_j) - f_i(x)}{\beta} G e_j.
\end{equation}
Notice that this approximation differs from the one proposed in \cite{rando2023optimal}. While their approach relies on central finite differences, our method employs forward finite differences, which are computationally cheaper. Specifically, our approximation requires only $\ell + 1$ function evaluations compared to the $2\ell$ required by their method. 
The algorithm we study is defined by the following iteration.
\begin{algorithm}[H]
\caption{VR-SZD: Variance Reduced Structured Zeroth-order Descent}\label{algo:osvrz}
\begin{algorithmic}[1]
\State \textbf{Input:} $x^0_0 \in \mathbb{R}^d$, $T \in \mathbb{N}_+$, $m \in \mathbb{N}_+$, $\gamma \in \mathbb{R}_+$, $(\beta_\tau)_{\tau \in \mathbb{N}} \subset \mathbb{R}_+$, $\ell \in \mathbb{N}$ s.t. $1 \leq \ell \leq d$, $b > 0$
\For{$\tau = 0$ to $T$}
    \State $g_\tau := g(x_0^\tau, \beta_\tau)$ %
    \For{$k = 0$ to $m - 1$}
        \State Sample $i_{1, k}^\tau, \cdots, i_{b, k}^\tau$ uniformly from $[n]$ (with replacement)
        \State Sample $G_{1,k}^\tau, \cdots, G_{b,k}^\tau$ uniformly from $O(d)$  
        \State Compute
        \begin{equation*}
        v_k^\tau = \frac{1}{b} \sum\limits_{j = 1}^b (\hat{g}_{i^{\tau}_{j, k}}(x_k^\tau, G^{\tau}_{j, k}, \beta_\tau) - \hat{g}_{i_{j, k}^\tau}(x_0^\tau, G^{\tau}_{j, k}, \beta_\tau)) + g_\tau    
        \end{equation*}
        \State $x_{k + 1}^\tau = \text{prox}_{\gamma h}(x_k^\tau - \gamma  v_k^\tau)$
    \EndFor
    \State $x_0^{\tau + 1} = x_m^{\tau}$
\EndFor
\State \textbf{Output:} $x_0^{T}$
\end{algorithmic}
\end{algorithm}
\noindent Given an initial guess $x_0^0 \in \mathbb{R}^d$, at every iteration $\tau \in \mathbb{N}$, the algorithm computes the full gradient surrogate using eq. \eqref{eqn:full_approx} at the current (outer) iterate $x_0^\tau \in \mathbb{R}^d$. Then, for $k =0, \cdots, m - 1$ iterations, it samples a batch of $b$ indices $(i_{j, k}^\tau)_{j=1}^b \subset [n]$ (with replacement) and, for every index $i_{j, k}^\tau$, an orthogonal matrix $G_{j,k}^\tau$ uniformly from $O(d)$. %
Then, the  direction $v_k^\tau \in \mathbb{R}^d$ is computed as
\begin{equation}\label{eqn:osvrz_v}
    v_k^\tau = \frac{1}{b} \sum\limits_{j = 1}^b (\hat{g}_{i_{j, k}^\tau}(x_k^\tau, G_{j,k}^\tau, \beta_\tau) - \hat{g}_{i_{j, k}^\tau}(x_0^\tau, G_{j,k}^\tau, \beta_\tau)) + g(x_0^\tau, \beta_\tau).
\end{equation}
Then, the inner iterate is updated as
\begin{equation}\label{eqn:inner_iter}
    x_{k+1}^\tau = \prox{\gamma h}{x_k^\tau - \gamma v_k^\tau}.
\end{equation}
After $m$ steps, the new outer iterate is set to be the last inner iterate i.e. 
\begin{equation*}
    x_0^{\tau + 1} = x_m^\tau.
\end{equation*}
Notice that Algorithm \ref{algo:osvrz} depends on different parameters, namely the stepsize $\gamma$, the sequence of discretization parameters $(\beta_\tau)_{\tau \in \mathbb{N}}$, the number of inner iterations $m$, the batch size $b$ and the number of directions $\ell$. One of our main contribution is the theoretical and empirical analysis of this algorithm with respect to the choice of these parameters.

\begin{remark}
Notice that the generation of orthogonal matrices in the inner iteration has a high computational cost. In practice, for $\ell < d$, instead of generating $b$ orthogonal $d \times d$ matrices, we can reduce the cost using a QR factorization with diagonal adjustment \cite{mezzadri2006generate} of $d \times \ell$ random matrices with entries sampled from $\mathcal{N}(0, 1)$. This operation has a computational cost of $\mathcal{O}(b d \ell^2)$. Alternatively, by relaxing the theoretical requirements, cheaper methods can be used \cite{rando2023optimal}. Moreover, we recover the iteration of \cite{Kazemi2024} as a special case when $\ell = 1$.  With this choice, we show that our method achieves the same complexity as state-of-the-art algorithms such as \cite{Kazemi2024,ji_improved_vr}, while requiring sampling less functions per iteration in the inner loops - see Section \ref{sec:main_results}.  In Section \ref{sec:experiments}, we also compare our procedure with $b = 1$ and $\ell > 1$ against the algorithm proposed in \cite{Kazemi2024} with $b = \ell$, observing that our algorithm provides better performance. 
\end{remark}

\subsection{Related Works} \label{sec:related_works}
Finite-difference methods have been widely studied in the literature in various settings, with both single and multiple, structured and unstructured directions - see, e.g. \cite{nesterov2017random,ghadimi_lan,duchi_power_of_two,shamir2017optimal,kozak2021zeroth,gasnikov_sph,rando2023optimal,Rando2024,shamir2017optimal} and references therein. Additionally, several gradient-free methods have been proposed for black-box finite-sum optimization (i.e., problem \eqref{eqn:problem}) - see \cite{zo_svrg_avg,liu2018stochastic,ji_improved_vr,fang_spider,Kazemi2024,mu2024variancereducedgradientestimatornonconvex}. These algorithms mimic first-order variance reduction methods such as SVRG \cite{svrg_johnson}, SAGA \cite{saga}, and SPIDER \cite{fang_spider}, replacing (stochastic) gradients with finite-difference approximations. Notably, these methods either use unstructured directions or rely on all coordinate directions to approximate both full and stochastic gradients \cite{zo_svrg_avg,ji_improved_vr,Kazemi2024,Huang_Gu_Huo_Chen_Huang_2019}. Each approach has its pros and cons. Using unstructured directions to approximate stochastic gradients allows for fewer function evaluations per inner iteration than coordinate directions but results in worse gradient approximations and allow to use only a smaller step size. Conversely, approximating stochastic gradients with all coordinate directions enables the use of a larger step size but requires a high number of function evaluations per iteration, which can be prohibitive for some applications. Our approach aims to combine the advantages of both. It approximates stochastic gradients using $\ell \leq d$ directions, avoiding the high cost of coordinate directions, while imposing orthogonality on these directions to achieve better approximations than unstructured directions.

\noindent In the following, we review the most related works and highlight the distinctions between these methods and our proposed algorithm. To the best of our knowledge, for finite-sum optimization with variance reduction, no prior work used structured directions.

\paragraph*{Finite-difference Methods.} Most existing works have focused on the theoretical analysis of zeroth-order methods that use unstructured directions to approximate gradients - see, e.g., \cite{nesterov2017random,duchi_power_of_two,ghadimi_lan,shamir2017optimal,gasnikov_sph,salimans2017evolution,flaxman2005online,gfm_lin_zheng_jordan,chen2015randomized,zoro,ZO-BCD} and reference therein. Notably, most results in the non-convex setting analyze the convergence of the expected norm of the gradient of $f$ and do not extend to composite optimization, where the function $h$ may be non-smooth. Convergence rates for the quantities of interest are typically expressed in terms of number of function evaluations ({\it complexity}), explicitly characterizing the dependence on the dimension of the input space $d$. However, they often do not show the dependence on the number of functions $n$, as the specific case of finite-sum objectives is often not analyzed. In the following, we express the complexity explicitly showing the dependence on $n$. In \cite{nesterov2017random,duchi_power_of_two}, the authors analyze finite-difference algorithms using a single Gaussian direction. In \cite{nesterov2017random}, a complexity of $\mathcal{O}(n d \varepsilon^{-1})$ is achieved in the smooth non-convex setting. In \cite{duchi_power_of_two}, both single and multiple directions are considered, and lower bounds are derived. However the analysis is limited to convex functions. In \cite{ghadimi_lan} a similar zeroth-order algorithm is proposed for stochastic optimization and the analysis shows a complexity of $ \mathcal{O}(d \varepsilon^{-2})$ for smooth non-convex functions. Structured finite-difference methods have also been analyzed in recent works \cite{rando2023optimal,Rando2024,kozak2021zeroth,stief_zeroth,str_zo_applied}. In \cite{rando2023optimal}, a structured finite-difference algorithm is studied across various settings, achieving a complexity of $ \mathcal{O}(n d \varepsilon^{-1})$ for smooth non-convex functions. Similarly, \cite{kozak2021zeroth} introduces a structured method, which is extended to stochastic setting in \cite{Rando2024}. However these methods have not been analyzed for general non-convex functions. In \cite{stief_zeroth}, a finite-difference method that samples direction matrices from the Stiefel manifold \cite{chikuse2012statistics} is proposed, but its analysis is restricted to smooth functions satisfying \L{}ojasiewicz inequalities \cite{lojasiewicz1963topological}. In \cite{str_zo_applied} a finite-difference method that uses Hadamard-Rademacher matrices as directions is proposed but no theoretical analysis is provided. In the domain of adversarial machine learning, \cite{rando2024newformulationzerothorderoptimization} introduces a structured approach, but analysis is provided only for non-smooth non-convex functions. For composite optimization, \cite{Ghadimi2016} presents an unstructured finite-difference method for stochastic optimization, achieving a complexity of $\mathcal{O}(d \varepsilon^{-2})$ in non-convex setting. However, to the best of our knowledge, no structured finite-difference algorithm for problem \eqref{eqn:problem} has been proposed yet. 

\paragraph*{Variance-reduced Finite-difference Methods.} Most of the approaches, including our algorithm, mimic SVRG algorithm; see, e.g., \cite{zo_svrg_avg,liu2018stochastic,ji_improved_vr,Kazemi2024,Huang_Gu_Huo_Chen_Huang_2019,mu2024variancereducedgradientestimatornonconvex}. In \cite{zo_svrg_avg}, two methods are proposed: the first approximates both gradient and stochastic gradients using $\ell$ directions sampled uniformly from a sphere, while the second uses all canonical basis. Both approaches achieve a complexity of $\mathcal{O}( n d \varepsilon^{-1})$ in the smooth non-convex setting. Similarly, in \cite{liu2018stochastic}, a method that approximates the full gradient using $\ell$ Gaussian vectors is proposed. It differs from \cite{zo_svrg_avg} from the fact that stochastic gradients are approximated using finite differences with a single Gaussian direction randomly selected from those used for the full gradient approximation. In \cite{ji_improved_vr}, two strategies are introduced. The first is a hybrid method that approximates the full gradient using coordinate directions and the stochastic gradients with a mini-batch of finite-difference approximations using a single direction sampled uniformly from a sphere. The second method uses coordinate directions for both gradient and stochastic gradient approximations. These methods achieve a complexity of $\mathcal{O}(n^{2/3} d \varepsilon^{-1})$ for smooth non-convex functions. However, these algorithms are not designed for problem \eqref{eqn:problem} and the results do not extend to the cases where $h$ is non-smooth. For composite optimization i.e. problem \eqref{eqn:problem}, \cite{Huang_Gu_Huo_Chen_Huang_2019} introduces a method that uses all coordinate directions to approximate both the full gradient and stochastic gradients; and an algorithm that approximates stochastic gradients using finite differences with Gaussian directions. These methods achieve for non-convex functions a complexity of $\mathcal{O}(n^{2/3} d \varepsilon^{-1})$ and $\mathcal{O}(n^{2/3} d \varepsilon^{-1} + d \sigma^2)$ respectively. However, their analysis relies on the assumption that the norm of the stochastic gradients is bounded by some constant $\sigma^2$, an assumption that our approach does not require. In \cite{Kazemi2024}, a hybrid method is introduced that approximates the full gradient using all coordinate directions and the stochastic gradients with unstructured directions. This approach achieves a complexity of $\mathcal{O}(n^{2/3} d \varepsilon^{-1})$ in the same setting. Furthermore, \cite{Kazemi2024} also proposes an algorithm that uses coordinate directions for both gradient and stochastic gradient approximations, achieving the same rate. While our results match with those of \cite{Kazemi2024}, our approach has several key differences. First, \cite{Kazemi2024} uses central finite differences to construct the full gradient approximation, while our method employs forward finite differences (see equation \eqref{eqn:full_approx}), resulting in a cheaper estimator. Second, in \cite{Kazemi2024}, the direction $v_k^\tau$ is computed by sampling $b$ functions $f_i$ and averaging their stochastic gradient approximations, each constructed with a single direction sampled uniformly from a sphere. In contrast, our method samples $b$ functions and approximates each stochastic gradient using $\ell \leq d$ random orthogonal directions. This makes our approach more general, as it can  recover the inner iteration of \cite{Kazemi2024} as a special case when $\ell = 1$, since every vector $v \in \mathbb{S}^{d - 1}$ can be expressed as a rotation $G \in O(d)$ of the first canonical base $e_1$ \footnote{Notice that in \cite{Kazemi2024}, the authors also consider the case where the gradient approximation in the outer loop is computed on a mini-batch of functions. We do not consider this setting since, as shown in \cite{Kazemi2024}, performing the analysis would require assuming bounded variance, leading to a worse rate.}. Other methods inspired by Spider or SAGA algorithms have been proposed in the literature \cite{ji_improved_vr,Huang_Gu_Huo_Chen_Huang_2019,Kazemi2024,fang_spider,Zhu_Zhang_2023}, but none uses structured directions. In this work, we focus on SVRG-based algorithms, leaving the development of other variants as a research direction. %
In Table~\ref{tab:complexities}, we summarize the complexity of different algorithms to optimize non-convex functions. Specifically, for algorithms analyzed under the smooth non-convex case (i.e., when $h$ is a constant function), the reported complexity corresponds to the number of function evaluations required by the algorithm to obtain a point $x \in \mathbb{R}^d$ such that $\mathbb{E}[\|\nabla f(x) \|^2] \leq \varepsilon$. In contrast, for methods addressing problem~\eqref{eqn:problem} (i.e. where $h$ can be a convex, non-smooth, and proximable function), we use the gradient mapping \cite{Ghadimi2016,Kazemi2024} instead of the gradient  -  see eq.~\eqref{eqn:generalized_gradient}.

\begin{table}[H]
    \centering
    \begin{tabular}{l l l l}
        \toprule
        Algorithm & Grad. Approx. & Complexity &  $h$ non smooth\\
        \midrule
         ZO-GD \cite{nesterov2017random} & UST & $\mathcal{O}(n d \varepsilon^{-1})$ & \ding{55}\\
         ZO-SGD \cite{ghadimi_lan} & UST & $\mathcal{O}(d \varepsilon^{-2})$ & \ding{55} \\
         O-ZD \cite{rando2023optimal} & STR & $\mathcal{O}(n d \varepsilon^{-1})$ &  \ding{55}\\
         RSPGF \cite{Ghadimi2016} & UST & $\mathcal{O}(d^2 \varepsilon^{-2})$ &  \checkmark\\
         ZO-SVRG-Ave \cite{zo_svrg_avg} & UST & $\mathcal{O}(n d \varepsilon^{-1})$ & \ding{55}\\
         ZO-SVRG-Coord \cite{ji_improved_vr} & COO & $\mathcal{O}(n^{2/3} d \varepsilon^{-1})$ & \ding{55}\\
         ZO-SVRG-CoordRand \cite{ji_improved_vr} & UST & $\mathcal{O}(n^{2/3} d \varepsilon^{-1})$ & \ding{55}\\
         ZO-ProxSVRG [GauSGE] \cite{Huang_Gu_Huo_Chen_Huang_2019} & UST & $\mathcal{O}(n^{2/3} d \varepsilon^{-1} + d \sigma^2)$ & \checkmark\\
         ZO-ProxSVRG [CooSGE] \cite{Huang_Gu_Huo_Chen_Huang_2019} & COO & $\mathcal{O}(n^{2/3} d \varepsilon^{-1})$ & \checkmark\\
         ZO-PSVRG+ \cite{Kazemi2024} &  COO & $\mathcal{O}(n^{2/3} d \varepsilon^{-1})$ &  \checkmark\\
         ZO-PSVRG+ [RandSGE] \cite{Kazemi2024} &  UST & $\mathcal{O}(n^{2/3} d \varepsilon^{-1})$ &  \checkmark\\
         {\bf VR-SZD (Our)} &  STR & $\mathcal{O}(n^{2/3} d \varepsilon^{-1})$ & \checkmark \\
        \bottomrule
    \end{tabular}
    \caption{Comparison of different zeroth-order algorithms in terms of complexity for finite-sum optimization. For each algorithm, we specify whether the (stochastic) gradient approximations are constructed using unstructured (UST), structured (STR), or coordinate (COO) directions, the achieved complexity in the non-convex setting, and whether the results extend to cases where the function $h$ is non-smooth (\checkmark = yes, \ding{55} = no).}
    \label{tab:complexities}
\end{table}

\section{Main Results}\label{sec:main_results}
In this section, we provide the main results considering two settings: non-convex and non-convex Polyak-\L{}ojasiewicz \cite{bolte}.  For both settings, we assume that the objective function satisfies the following hypothesis
\begin{assumption}[$L$-smoothness]\label{ass:l_smooth}
    There exists a constant $L > 0$ such that for every $i \in [n]$, %
    the function $f_i$ is $L$-smooth; i.e. $f_i$ is differentiable and, for every $x, y \in \mathbb{R}^d$,
    \begin{equation*}
        \| \nabla f_i(x) - \nabla f_i(y) \| \leq L \| x - y \|.
    \end{equation*}
    Moreover, the function $h$ is convex and proximable.
\end{assumption}
\noindent This is a standard assumption considered also in other works - see e.g. \cite{ji_improved_vr,redd_svrg_nonconv,prox_svrg_nconv,Ghadimi2016,Rando2024}. In the following sections, we provide the rates for non-convex and non-convex Polyak-\L{}ojasiewicz settings while we provide the proofs in Appendix \ref{app:addproofs}. %

\subsection{Non-convex Setting}
To analyze convergence properties of our algorithm in the setting in which $f$ is non-convex, we introduce the gradient mapping (or generalized gradient) as follows  
\begin{equation}\label{eqn:generalized_gradient}
    (\forall x \in \mathbb{R}^d,\,\forall\gamma > 0) \qquad\mathcal{G}_\gamma(x) := \frac{1}{\gamma} \left( x - \prox{\gamma h}{x - \gamma \nabla f(x)} \right).    
\end{equation}
Note that when $h$ is a constant function and so the proximal operator is the identity, the gradient mapping simplifies to $\mathcal{G}_\gamma(x) = \nabla f(x)$ for every $\gamma > 0$. A point $x \in \mathbb{R}^d$ is defined stationary if $\mathcal{G}_\gamma(x) = 0$. The use of gradient mapping as a convergence metric has also been studied in previous works \cite{Ghadimi2016,Kazemi2024}. Next, we provide bounds on the expected squared norm of the gradient mapping. In particular, we use the following notation
\begin{equation*}
    \eta_m^T := \frac{1}{(T + 1)m} \sum\limits_{\tau = 0}^T \sum\limits_{k = 0}^{m - 1} \mathbb{E}[\| \mathcal{G}_\gamma(x_k^\tau) \|^2].
\end{equation*}
Now, we state the main theorem on the rate for non-convex functions.
\begin{theorem}[Rates in Non-convex Setting]
\label{thm:nonconv_rate}
    Under Assumption \ref{ass:l_smooth}, let $m > 1$, $(\beta_\tau )_{\tau \in \mathbb{N}}$ with $\beta_\tau > 0$ for every $\tau \in \mathbb{N}$, $\gamma < \min\left( \frac{1}{4L} , \frac{\sqrt{\ell b}}{\sqrt{32(e - 1) d} L m}\right)$ where $e$ denotes the Nepero's constant and $\Delta := (\frac{1}{2} - L \gamma)$ . 
    Let $x_k^\tau$ be the sequence generated by Algorithm \ref{algo:osvrz} for every $0 \leq k \leq m - 1$ and $\tau \geq 0$,  
        \begin{equation*}
            \begin{aligned}
             \eta_m^T &\leq \frac{ F(x_0^0) - \min F }{\Delta \gamma m (T + 1)}  + \left( \frac{3 d}{\ell b } + 2 + 3d \right) \frac{L^2 d}{\Delta (T + 1)} \sum\limits_{\tau = 0}^T \beta_\tau^2. 
            \end{aligned}
        \end{equation*}
\end{theorem}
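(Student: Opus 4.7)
The argument follows the standard SVRG-style analysis for non-convex proximal algorithms (cf.\ \cite{Kazemi2024, Huang_Gu_Huo_Chen_Huang_2019}), but with extra care to track both the finite-difference bias and the structured-direction variance. I split the proof into three phases: moment bounds on the zeroth-order estimators, a one-step prox-gradient descent inequality, and a telescoping argument.

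For the first phase, $L$-smoothness of $f$ immediately gives $\|g(x,\beta)-\nabla f(x)\|^{2}\leq L^{2}d\beta^{2}/4$. For the structured estimator $\hat g_i(x,G,\beta)$, the fact that $G$ is uniform on $O(d)$ implies that each column $Ge_j$ is uniform on the unit sphere, with $\mathbb{E}_G[Ge_j e_j^\top G^\top] = I/d$; combined with $L$-smoothness this yields $\mathbb{E}_G[\hat g_i(x,G,\beta)] = \nabla f_i(x) + O(L\sqrt{d}\,\beta)$. The key quantitative step is the second-moment bound: exploiting the orthogonality of the $\ell$ columns $(Ge_j)_{j=1}^\ell$ allows me to shave the variance of a single spherical estimator by a factor $\ell$, producing a bound scaling as $(d/\ell)\|\nabla f_i(x)\|^{2} + O(L^{2}d^{2}\beta^{2})$ rather than the naïve $d\|\nabla f_i(x)\|^{2}$. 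Combining this with the independence of the samples $(i_{j,k}^\tau, G_{j,k}^\tau)$ across the inner mini-batch and the SVRG cancellation identity gives $\mathbb{E}[v_k^\tau \mid \mathcal{F}_k^\tau] = \nabla f(x_k^\tau) + O(L\sqrt{d}\,\beta_\tau)$ and
\begin{equation*}
    \mathbb{E}\big[\|v_k^\tau - \nabla f(x_k^\tau)\|^{2} \,\big|\, \mathcal{F}_k^\tau\big] \lesssim \frac{d}{\ell b}\,L^{2}\,\|x_k^\tau - x_0^\tau\|^{2} + L^{2} d^{2} \beta_\tau^{2},
\end{equation*}
and I expect the $(e-1)$ factor appearing in the theorem's stepsize condition to emerge from the sharp numerical constant in the structured variance inequality.

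In the second phase, combining $L$-smoothness of $f$, the three-point inequality for $\prox{\gamma h}{\cdot}$, and Young's inequality to separate $\nabla f(x_k^\tau)$ from $v_k^\tau$ yields a per-step descent of the form
\begin{equation*}
    \mathbb{E}[F(x_{k+1}^\tau)] \leq \mathbb{E}[F(x_k^\tau)] - \Delta\gamma\,\mathbb{E}[\|\mathcal{G}_\gamma(x_k^\tau)\|^{2}] + \gamma\,\mathbb{E}[\|v_k^\tau - \nabla f(x_k^\tau)\|^{2}],
\end{equation*}
with $\Delta = \tfrac{1}{2} - L\gamma$. Plugging the variance bound from phase one and then unrolling $\|x_k^\tau - x_0^\tau\|^{2}$ through the triangle inequality, non-expansiveness of prox, and Jensen (so as to bound it by $\gamma^{2}$ times a sum of squared increments) produces a coupling term with coefficient of order $m^{2}\gamma^{2} d/(\ell b)$ that must be absorbed into the $\Delta\gamma\|\mathcal{G}_\gamma\|^{2}$ descent; this is precisely why the stepsize restriction $\gamma \lesssim \sqrt{\ell b}/(L m\sqrt{d})$ is required. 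Telescoping $F$ across the $m$ inner steps, summing over $\tau = 0,\dots,T$, dividing by $(T+1)m$, and using $F(x_m^T) \geq \min F$ then produces the stated bound, with the $\sum_\tau \beta_\tau^{2}$ term coming from the cumulative forward-difference bias.

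The main obstacle is the variance analysis in phase one: obtaining the sharp $d/\ell$ factor (rather than a naïve $d$) requires carefully handling the cross terms induced by the orthogonality of $(Ge_j)_{j=1}^\ell$, and the bias-variance decomposition must simultaneously absorb the $O(L^{2}d^{2}\beta_\tau^{2})$ forward-difference error. The bookkeeping in the telescoping phase, where the unrolled inner-iterate bound couples the variance across all $m$ inner steps and fixes the precise stepsize condition, is technically involved but largely parallels the SVRG scheme of \cite{Kazemi2024}.
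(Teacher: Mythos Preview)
Your Phase~1 variance analysis is essentially what the paper does (its Lemmas~\ref{lem:approx_error} and~\ref{lem:bound_vk}), and the overall SVRG architecture is right. But there are two concrete gaps in Phases~2--3.

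First, your one-step descent inequality is too coarse: you have dropped the negative increment term $-\bigl(\tfrac{1}{2\gamma}-\tfrac{L}{2}\bigr)\mathbb{E}\bigl[\|x_{k+1}^\tau - x_k^\tau\|^{2}\bigr]$, which the paper retains (Lemma~\ref{lem:funval_bound}). This term is precisely the mechanism by which the coupling $\|x_k^\tau - x_0^\tau\|^{2}$ is absorbed. The paper does \emph{not} unroll $\|x_k^\tau - x_0^\tau\|^{2}$ and feed it into the $\Delta\gamma\|\mathcal{G}_\gamma\|^{2}$ term as you propose; instead it introduces a Lyapunov function $R_k^\tau = F(x_k^\tau) + c_k\|x_k^\tau - x_0^\tau\|^{2}$ with $c_m = 0$ and the backward recursion $c_k = (1+\theta_1)c_{k+1} + \tfrac{6d}{\ell b}L^{2}\gamma$. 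Expanding $\|x_{k+1}^\tau - x_0^\tau\|^{2}$ via Young's inequality with parameter $\theta_1$ produces an extra $(1+\tfrac{1}{\theta_1})c_{k+1}\|x_{k+1}^\tau - x_k^\tau\|^{2}$ which is then cancelled by the negative increment term you discarded. Your direct-unrolling route would instead need $\|x_{j+1}^\tau - x_j^\tau\|^{2} \le 2\gamma^{2}\|\mathcal{G}_\gamma(x_j^\tau)\|^{2} + 2\gamma^{2}\|v_j^\tau - \nabla f(x_j^\tau)\|^{2}$ and hence a circular bootstrap on the variance sum; this can be closed under the same stepsize scaling, but it is not spelled out in your plan and would not reproduce the theorem's constants.

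Second, your attribution of the $(e-1)$ factor is wrong: it has nothing to do with the structured variance inequality. It comes from the Lyapunov recursion. Solving the recursion with $c_m=0$ gives $c_0 = \tfrac{6dL^{2}}{\ell b}\gamma\,\tfrac{(1+\theta_1)^m - 1}{\theta_1}$, and the choice $\theta_1 = 1/m$ yields $c_0 \le \tfrac{6dL^{2}}{\ell b}(e-1)m\gamma$ via $(1+1/m)^m < e$. The stepsize bound $\gamma < \sqrt{\ell b}/(\sqrt{32(e-1)d}\,Lm)$ is exactly what forces $(1+\tfrac{1}{\theta_1})c_0 + \tfrac{L}{2} - \tfrac{1}{2\gamma} < 0$, i.e.\ it makes the coefficient of the increment term negative so it can be dropped \emph{after} the Lyapunov telescoping, not before.
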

\noindent In the next corollary, we derive explicit rates for specific choices of the parameters.
\begin{corollary}
\label{cor:cor1}
    Under Assumption \ref{ass:l_smooth}, let $\gamma = \frac{\sqrt{\ell b}}{10 L m \sqrt{d}}$ and $m \geq \sqrt{b}$. Then:\\
    (i) choosing $\beta_\tau = \frac{\beta}{d} (\tau + 1)^{-\alpha}$ with $\alpha > 1/2$ and $\beta > 0$, we have
    \begin{equation*}
        \eta_m^T \leq 40\frac{ L \sqrt{d}}{ \sqrt{\ell b}}\frac{F(x_0^0) - \min F}{T + 1} + \mathcal{O}\left(\frac{1}{T + 1}\right);
    \end{equation*}
    (ii) choosing $\beta_\tau = \frac{\beta}{d}$ with $\beta > 0$, we get
    \begin{equation*}
        \eta_m^T \leq 40\frac{ L \sqrt{d}}{ \sqrt{\ell b}} \frac{F(x_0^0) - \min F}{T + 1} + C_2 \beta^2,
    \end{equation*}
    where $C_2 > 0$ is a constant defined in the proof.\\
    (iii) for $\varepsilon \in (0,1)$, fix $T$ such that $T + 1 \geq \frac{80 L \sqrt{d} (F(x^{0}_0) - \min F)}{\sqrt{\ell b} \Delta} \varepsilon^{-1} $. Choosing $\beta_\tau = \sqrt{\frac{\varepsilon}{2C_3}}$ where $C_3 =4\left( \frac{3 d}{\ell b } + 2 + 3d \right) L^2 d$, we have $\eta_m^T \leq \varepsilon$ and the complexity is 
    \begin{equation*}
    \mathcal{O}\left( \left( n \frac{d \sqrt{d}}{\sqrt{\ell b} } + \frac{m b (\ell + 1) \sqrt{d}}{ \sqrt{\ell b}} \right) \frac{1}{\varepsilon}\right).
    \end{equation*}
    In particular, choosing $b =\lceil n^{2/3} \rceil$, $m = \lceil \sqrt{b} \rceil$ and $\ell = \lceil d / c \rceil$, for some $c > 0$, the complexity is 
    \begin{equation*}
    \mathcal{O}\left( d n^{2/3} \varepsilon^{-1}\right).
    \end{equation*}
    
\end{corollary}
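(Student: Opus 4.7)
The plan is to apply Theorem \ref{thm:nonconv_rate} with the stated choice of stepsize and then work out the three regimes by directly bounding the tail sum $\sum_{\tau=0}^{T} \beta_\tau^2$ and counting function evaluations.

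First, I would verify that the stepsize $\gamma = \sqrt{\ell b}/(10 L m \sqrt{d})$ satisfies the constraints of Theorem \ref{thm:nonconv_rate}. Since $\ell \le d$ and $m \ge \sqrt{b}$, one has $\sqrt{\ell b}/(m \sqrt{d}) \le \sqrt{db}/(\sqrt{b}\sqrt{d}) = 1$, so $L\gamma \le 1/10 < 1/4$, which gives $\gamma < 1/(4L)$ and, in particular, $\Delta = 1/2 - L\gamma > 1/4$. The second bound on $\gamma$ reduces to checking $1/10 \le 1/\sqrt{32(e-1)}$, i.e., $32(e-1) \le 100$, which holds. With $\Delta > 1/4$, the dominant coefficient in Theorem \ref{thm:nonconv_rate} becomes
\begin{equation*}
    \frac{1}{\Delta \gamma m} \;\le\; \frac{4}{\gamma m} \;=\; \frac{40 L \sqrt{d}}{\sqrt{\ell b}},
\end{equation*}
which is exactly the leading factor appearing in (i) and (ii).

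For part (i), with $\beta_\tau = (\beta/d)(\tau+1)^{-\alpha}$ and $\alpha > 1/2$, the series $\sum_{\tau \ge 0}(\tau+1)^{-2\alpha}$ converges to a finite constant $\zeta(2\alpha)$, so $\sum_{\tau=0}^{T}\beta_\tau^2 \le \beta^2 \zeta(2\alpha)/d^2$ is bounded uniformly in $T$; plugging this into the second term of Theorem \ref{thm:nonconv_rate} and absorbing the dimensional factors into the $\mathcal{O}$ notation gives the claimed $\mathcal{O}(1/(T+1))$ remainder. For part (ii), with $\beta_\tau = \beta/d$ constant, $\sum_{\tau=0}^T \beta_\tau^2 = (T+1)\beta^2/d^2$, and the $(T+1)$ factors cancel, leaving a constant multiple of $\beta^2$; the explicit constant is
\begin{equation*}
    C_2 \;=\; 4\left(\tfrac{3d}{\ell b} + 2 + 3d\right)\frac{L^2}{d},
\end{equation*}
obtained by pulling out the $\beta^2/d^2$ factor.

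Part (iii) is the main bookkeeping step. Setting the first term of Theorem \ref{thm:nonconv_rate} to $\le \varepsilon/2$ yields the stated lower bound on $T+1$. To make the second term also $\le \varepsilon/2$, I would choose $\beta_\tau$ constant and equal to $\sqrt{\varepsilon/(2C_3)}$, where $C_3 = 4(3d/(\ell b) + 2 + 3d) L^2 d$ absorbs the coefficient of $\sum \beta_\tau^2$ after dividing by $(T+1)$. Since per outer iteration the algorithm performs $n(d+1)$ function evaluations for $g_\tau$ and $2b(\ell+1)$ evaluations per inner step (two calls to $\hat g_i$, each using $\ell+1$ evaluations), the total cost is
\begin{equation*}
   (T+1)\bigl(n(d+1) + 2m b (\ell+1)\bigr) \;=\; \mathcal{O}\!\left(\frac{L\sqrt{d}}{\sqrt{\ell b}\,\varepsilon}\bigl(nd + m b(\ell+1)\bigr)\right),
\end{equation*}
which matches the intermediate bound. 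Finally, substituting $b = \lceil n^{2/3}\rceil$, $m = \lceil\sqrt b\rceil$, and $\ell = \lceil d/c\rceil$ gives $\sqrt{\ell b} \asymp \sqrt{d}\, n^{1/3}/\sqrt{c}$, so the first summand becomes $\mathcal{O}(d n^{2/3}/\varepsilon)$ and the second is $\mathcal{O}(n^{1/3}\cdot n^{2/3}\cdot d / (n^{1/3}\varepsilon)) = \mathcal{O}(d n^{2/3}/\varepsilon)$.

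The hard part is not conceptual but notational: one must keep the constants $\Delta$, $1/\gamma m$, and $C_3$ consistent throughout the three regimes, and carefully count both the outer-loop $n(d+1)$ and the inner-loop $2b(\ell+1)$ evaluations so that the final substitution $b = n^{2/3}$, $m = \sqrt{b}$, $\ell = d/c$ balances the two summands at $\mathcal{O}(d n^{2/3}\varepsilon^{-1})$. No new ingredient beyond Theorem \ref{thm:nonconv_rate} and elementary series bounds is needed.
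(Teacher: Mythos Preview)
Your proposal is correct and follows essentially the same route as the paper's proof: invoke Theorem \ref{thm:nonconv_rate}, use $\Delta \ge 1/4$ from the stepsize choice to extract the leading factor $40L\sqrt{d}/\sqrt{\ell b}$, then handle the three regimes by bounding $\sum_\tau \beta_\tau^2$ and counting the $n(d+1)+2mb(\ell+1)$ function evaluations per outer iteration. Your explicit verification that $\gamma$ satisfies both constraints of Theorem \ref{thm:nonconv_rate} (via $\ell\le d$, $m\ge\sqrt{b}$, and $32(e-1)<100$) is actually more careful than the paper, which simply asserts the stepsize is admissible.
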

\paragraph*{Discussion.} The bound in Theorem \ref{thm:nonconv_rate} is composed two terms: one dependent on the initialization and another representing the approximation error. Specifically, the latter arises from two sources of error: the approximation of (stochastic) gradients and the use of outdated gradient surrogates in the inner loop iterations. Notice that this error term depends on $\sum\limits_{\tau = 0}^T \beta_\tau^2$. By selecting  $\beta_\tau^2 \in \ell^{1}$, the error can be made vanish as the (outer) iterations progress. In Corollary \ref{cor:cor1}, we explicitly present the convergence rates by fixing the parameters and exploring several choices for $\beta_\tau$. In the (i) case, $ \beta_\tau $ is chosen to be a square-summable sequence. With this choice, we show that our method converges to a stationary point with a rate of $ \mathcal{O}(1/T) $, which matches the rate achieved by first-order SVRG/ProxSVRG algorithms in the same setting \cite{redd_svrg_nonconv,prox_svrg_nconv}. Moreover, our rate is better than the one achieved by stochastic finite-difference methods — see, e.g., \cite{ghadimi_lan,Ghadimi2016}. Notice that previous works \cite{ji_improved_vr,zo_svrg_avg,liu2018stochastic} do not analyze their algorithm selecting this parameter as a square-summable sequence. %
In point (ii) of the Corollary, we consider $\beta_\tau$ constant. With this choice, our method converges to a region close to a stationary point, with the radius depending on $\beta^2$. Notably, by fixing the total number of iterations $ T $ and setting $ \beta_\tau = \mathcal{O}(1/\sqrt{T m}) $, we find that the size of the error region depends on the total number of iterations as for \cite{ji_improved_vr,zo_svrg_avg,liu2018stochastic}.  %
In point (iii), we derive the complexity of our method. In particular, we find that our approach achieves better complexity than the methods proposed in \cite{zo_svrg_avg}, while matching the complexity of the algorithms introduced in \cite{ji_improved_vr}. However, the analysis of \cite{zo_svrg_avg,ji_improved_vr} does not hold for Problem \ref{eqn:problem}, where the function $h$ can be non-smooth. Moreover, the complexity they derive holds only for specific parameter choices, while in Corollary \ref{cor:cor1}, we also propose a complexity that is agnostic to a specific choice of $b$, $\ell$ and $m$. Our complexity matches also with the complexity of ZO-ProxSVRG (with Gaussian directions) \cite{Huang_Gu_Huo_Chen_Huang_2019}. However, to get such a result, the authors had to assume bounded norm of the stochastic gradients, an assumption that is not required in our analysis. In ZO-PSVRG+ with random directions \cite{Kazemi2024}, they must choose $b \propto n^{2/3} d$ to achieve our same complexity. %
Notice that, even with this choice, our algorithm has cheaper iterations since the full gradient approximation is computed using forward finite differences instead of central finite differences. Moreover, for $\ell > 1$, our algorithm allows for a larger stepsize than ZO-PSVRG+ (with random directions) \cite{Kazemi2024}. %
Our results match those achieved by ZO-PSVRG (with coordinate directions) \cite{Kazemi2024}, ZO-SVRG-Coord \cite{ji_improved_vr}, and ZO-ProxSVRG (with coordinate directions) \cite{Huang_Gu_Huo_Chen_Huang_2019}. However, while these methods require using all $d$ canonical bases to approximate the stochastic gradients, our approach needs only $\ell \leq d$ structured directions, which makes our iteration cheaper. When comparing our results to the ones for O-ZD \cite{rando2023optimal} and ZO-GD \cite{nesterov2017random} algorithms, we observe that our method obtains a better dependence on the number of functions  from $n$ to $n^{2/3}$ in the complexity. %

\subsection{Non-convex Polyak-\L{}ojasiewicz Setting}
In this section, we analyze the performance of Algorithm \ref{algo:osvrz} under the (revised) Polyak-\L{}ojasiewicz (PL) condition. The standard PL condition \cite{polyak1987introduction} states that a differentiable function $F$ is $\mu$-PL if there exists $\mu > 0$ such that, for every $x \in \mathbb{R}^d$,  
\begin{equation}\label{eqn:std_pl}  
    \| \nabla F(x) \|^2 \geq 2 \mu \left(F(x) - \min F \right).  
\end{equation}  
Recall that if for instance $F$ is strongly convex than such a condition holds. However, since in problem \eqref{eqn:problem} the function $h$ is non-smooth, we cannot assume that $F$ satisfies this condition. Instead, we adapt it by replacing the gradient with the gradient mapping $\mathcal{G}_\gamma$ defined in eq. \eqref{eqn:generalized_gradient}. Formally, we consider functions satisfying the following hypothesis. 
\begin{assumption}[Revised Polyak-\L{}ojasiewicz]\label{ass:pl}
    The function $F$ is $\mu$-RPL; namely, it exists $\mu > 0$ such that for every $x \in \mathbb{R}^d$,
    \begin{equation*}
        \| \mathcal{G}_\gamma (x) \|^2 \geq 2 \mu \left( F(x) - \min F  \right).
    \end{equation*}
\end{assumption}
\noindent Notice that this assumption has been proposed and studied in several works \cite{Kazemi2024,prox_svrg_li}. Now, we state the theorem on the rate for non-convex RPL functions.
\begin{theorem}[Rates in Non-convex RPL Setting]\label{thm:pl}
Let Assumptions \ref{ass:l_smooth} and \ref{ass:pl} hold and let $\alpha = (1 - \frac{\gamma \mu}{2})$. If $\displaystyle \gamma < \min\left\{\frac{1}{4L}, \frac{2}{\mu}\frac{1}{(2m+1)}, \frac{\sqrt{b\theta}}{10mL}\sqrt{\frac{\ell}{d}}\right\}$, with $\theta = 1 - \frac{\mu \gamma}{2} - \mu\gamma m$, %
\begin{equation*}
    \begin{aligned}
    \mathbb{E}\left[F\left(x_{0}^{\tau+1}\right) - \min F \right] &\leq \alpha^{m(\tau+1)} \left( \mathbb{E}\left[F(x_0^\tau) - \min F \right]  +  \frac{1-\alpha^m}{1-\alpha}\gamma\left[\frac{3L^2 d^2}{\ell}  + \frac{L^2 d}{2} \right]\sum_{i=0}^{\tau}\frac{\beta_i^2}{\alpha^{m(i + 1)}} \right).        
    \end{aligned}
\end{equation*}
\end{theorem}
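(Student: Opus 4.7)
The plan is to first establish a one-inner-step descent lemma, then turn it into a contractive recursion using the RPL condition, sum over the inner loop, and finally unroll the outer loop. Let me denote $D_k^\tau := \mathbb{E}[F(x_k^\tau) - \min F]$ for brevity, and $C := \tfrac{3L^2 d^2}{\ell} + \tfrac{L^2 d}{2}$.

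\textbf{Step 1 (inner descent lemma).} Starting from the definition of the proximal update $x_{k+1}^\tau = \prox{\gamma h}{x_k^\tau - \gamma v_k^\tau}$ and the $L$-smoothness of $f$, one obtains the standard prox-gradient inequality
\begin{equation*}
F(x_{k+1}^\tau) \;\leq\; F(x_k^\tau) \,-\, \gamma\,\Delta\,\|\mathcal{G}_\gamma(x_k^\tau)\|^2 \,+\, \gamma\, \|v_k^\tau - \nabla f(x_k^\tau)\|^2,
\end{equation*}
up to the universal constants used to derive Theorem~\ref{thm:nonconv_rate}. Taking conditional expectation and invoking the bias/variance bound on the structured VR estimator (which must already appear in the proof of Theorem~\ref{thm:nonconv_rate}), the noise term splits into a discretization piece $\mathcal{O}(L^2 d \,\beta_\tau^2)$ and a VR piece of the order $\tfrac{d L^2}{\ell b}\|x_k^\tau - x_0^\tau\|^2$. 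This is where the assumption $\gamma < \sqrt{b\theta}/(10 m L)\sqrt{\ell/d}$ enters: it ensures that the latter can be absorbed when summing over $k$ (see Step~3).

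\textbf{Step 2 (apply RPL).} Using Assumption~\ref{ass:pl} to lower bound $\|\mathcal{G}_\gamma(x_k^\tau)\|^2 \geq 2\mu (F(x_k^\tau) - \min F)$ and recalling that $\Delta = 1/2 - L\gamma > 1/4$ under the step-size conditions, one recovers the per-step contraction
\begin{equation*}
D_{k+1}^\tau \;\leq\; \alpha\, D_k^\tau \,+\, \gamma\,\tfrac{d L^2}{\ell b}\,\mathbb{E}\|x_k^\tau - x_0^\tau\|^2 \,+\, \gamma\,(\text{const})\,L^2 d\, \beta_\tau^2 ,
\end{equation*}
with $\alpha = 1 - \gamma\mu/2$. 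The condition $\gamma < 2/(\mu(2m+1))$ guarantees $\alpha \in (0,1)$ in the regime where the inner loop is useful.

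\textbf{Step 3 (telescoping the VR drift).} The main obstacle is the term $\mathbb{E}\|x_k^\tau - x_0^\tau\|^2$, which is not directly controllable step by step. I would bound it via
\begin{equation*}
\mathbb{E}\|x_k^\tau - x_0^\tau\|^2 \;\leq\; k\sum_{j=0}^{k-1}\mathbb{E}\|x_{j+1}^\tau - x_j^\tau\|^2 \;\leq\; m\gamma^2 \sum_{j=0}^{k-1}\mathbb{E}\|v_j^\tau\|^2,
\end{equation*}
and then bound $\mathbb{E}\|v_j^\tau\|^2$ by a constant times $\mathbb{E}\|\nabla f(x_j^\tau)\|^2$ plus the variance terms, using the same estimator bounds as in Step~1. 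Absorbing this into the recursion is precisely what produces the quantity $\theta = 1 - \mu\gamma/2 - \mu\gamma m$ in the step-size restriction: the coefficient $\theta > 0$ is what ensures the telescoped inner sum collapses to an effective linear rate $\alpha^m$ for the outer step, with a per-outer-iteration discretization residual of order $\frac{1-\alpha^m}{1-\alpha}\gamma C\beta_\tau^2$.

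\textbf{Step 4 (inner-to-outer and unrolling).} Summing the contraction above over $k = 0,\dots,m-1$ yields
\begin{equation*}
D_0^{\tau+1} \;=\; D_m^{\tau} \;\leq\; \alpha^m\, D_0^\tau \,+\, \tfrac{1-\alpha^m}{1-\alpha}\,\gamma\, C\, \beta_\tau^2 .
\end{equation*}
A one-line induction on $\tau$ then gives $D_0^{\tau+1} \leq \alpha^{m(\tau+1)} D_0^0 + \frac{1-\alpha^m}{1-\alpha}\gamma C \sum_{i=0}^{\tau} \alpha^{m(\tau-i)} \beta_i^2$, which factoring out $\alpha^{m(\tau+1)}$ gives exactly the stated bound. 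The only delicate bookkeeping is keeping the universal constants in the variance inequality consistent with those appearing in $C = 3L^2 d^2/\ell + L^2 d /2$; this is the same calculation as in Theorem~\ref{thm:nonconv_rate} and should reuse the intermediate lemmas established there.
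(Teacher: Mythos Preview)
Your overall architecture (per-step contraction via RPL, then geometric unrolling) matches the paper, and Steps~1, 2 and~4 are essentially correct. The gap is in Step~3, and it is a real one.

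First, the bound $\mathbb{E}\|x_{j+1}^\tau - x_j^\tau\|^2 \leq \gamma^2\,\mathbb{E}\|v_j^\tau\|^2$ fails in the composite setting: with a nontrivial $h$, $x_{j+1}^\tau = \prox{\gamma h}{x_j^\tau - \gamma v_j^\tau}$ is \emph{not} $x_j^\tau - \gamma v_j^\tau$, and non-expansiveness of the prox only gives $\|x_{j+1}^\tau - \prox{\gamma h}{x_j^\tau}\| \leq \gamma\|v_j^\tau\|$, which is useless unless $x_j^\tau$ is a fixed point of the prox. Second, even granting that bound, controlling $\mathbb{E}\|v_j^\tau\|^2$ through $\|\nabla f(x_j^\tau)\|^2$ does not close the loop: Assumption~\ref{ass:pl} lower-bounds the \emph{gradient mapping} $\|\mathcal{G}_\gamma\|^2$, not $\|\nabla f\|^2$, so you cannot convert this back into $F(x_j^\tau)-\min F$ without additional structure. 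The argument as written becomes circular.

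The paper's mechanism is different and hinges on a term you discarded in Step~1. Lemma~\ref{lem:funval_bound} produces, in addition to the drift $+\tfrac{6d}{\ell b}L^2\gamma\,\|x_k^\tau - x_0^\tau\|^2$, a \emph{negative} term $-(\tfrac{1}{2\gamma}-\tfrac{L}{2})\,\|x_{k+1}^\tau - x_k^\tau\|^2$. Rather than dropping it, the paper expands $\|x_{k+1}^\tau - x_k^\tau\|^2$ via $x_0^\tau$ and Young's inequality with parameter $\theta_1$ to obtain
\[
-(\tfrac{1}{2\gamma}-\tfrac{L}{2})\,\|x_{k+1}^\tau - x_k^\tau\|^2
\;\leq\;
-\tfrac{C(\gamma)}{1+\theta_1}\,\|x_{k+1}^\tau - x_0^\tau\|^2
+\tfrac{C(\gamma)}{\theta_1}\,\|x_k^\tau - x_0^\tau\|^2 .
\]
Choosing $\theta_1 = 2k+1$, dividing by $\alpha^{k+1}$ and summing over $k$ makes the anchor-distance terms telescope; the step-size conditions $\gamma < \tfrac{2}{\mu(2m+1)}$ and $\gamma < \tfrac{\sqrt{b\theta}}{10mL}\sqrt{\ell/d}$ are exactly what make each residual coefficient
\[
\Bigl(\tfrac{1}{2\gamma}-\tfrac{L}{2}\Bigr)\Bigl(\tfrac{\alpha}{2k+2}-\tfrac{1}{2k+3}\Bigr) - \tfrac{6d}{\ell b}L^2\gamma
\]
nonnegative, so the whole sum can be dropped. (Your remark that $\gamma < 2/(\mu(2m+1))$ ``guarantees $\alpha\in(0,1)$'' misreads its role: $\alpha<1$ only needs $\gamma<2/\mu$; the stronger bound is what forces $\theta>0$ and hence the telescoping sign.) After that, the one-outer-step bound and the unrolling in your Step~4 go through verbatim.
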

\begin{remark}
    Notice that the condition $\gamma < \frac{\sqrt{b\theta}}{10mL}\sqrt{\frac{\ell}{d}}$ and $\theta > 0$ holds for
    \begin{equation*}
        \gamma < \frac{ \sqrt{\left( \frac{\mu}{2} + \mu m \right)^2 b \ell^2 + 400 b m^2 L^2 d \ell} - \left( \frac{\mu}{2} + \mu m\right)b  }{200 m^2 L^2 d}.
    \end{equation*}
    However, we kept the recursive condition on the stepsize to improve readability and simplify the comparison with state-of-the-art methods since their stepsize is expressed in the same way.
\end{remark}
\noindent In the following corollary, we show rates for specific choices of the parameters.
\begin{corollary}\label{cor:cor2}
    Under the assumptions of Theorem \ref{thm:pl}, we have:\\ %
    (i) let $\beta_\tau = \alpha^{m(\tau + 1)/2} \eta_\tau$ with $\sum\limits_{i =0}^{+\infty} \eta^2_i \leq \tilde{C} < + \infty$. Then,
    \begin{equation*}
        \begin{aligned}
            \mathbb{E}\left[F\left(x_{0}^{\tau+1}\right) - \min F\right] &\leq %
            \alpha^{m(\tau+1)} \left( \left[F(x_0) - \min F\right] + \frac{2}{\mu}\left[\frac{3L^2 d^2}{\ell}  + \frac{L^2 d}{2} \right] \tilde{C} \right).%
        \end{aligned}
    \end{equation*}
    (ii) let $\beta_\tau = \beta$ with $\beta > 0$. Then
    \begin{equation*}
        \begin{aligned}
            \mathbb{E}\left[F\left(x_{0}^{\tau+1}\right) - \min F \right] &\leq \alpha^{m(\tau+1)}\left[F(x_0^\tau) - \min F \right] + \bar{C} \beta^2.%
        \end{aligned}
    \end{equation*}
    where $\bar{C} > 0$ is a constant defined in the proof.\\
    (iii) For $\varepsilon \in (0,1)$, fix $T \geq \mathcal{O}(\frac{1}{m \gamma \mu} \log \frac{1}{\varepsilon})$ and let $\beta_\tau = \sqrt{\frac{\varepsilon}{2 \bar{C}}}$ where $\bar{C} > 0$ is a constant defined in the proof. Then, we have
    \begin{equation*}
        \mathbb{E}[F(x^T_0) - \min F] \leq \varepsilon,
    \end{equation*}
    and the complexity is
    \begin{equation*}
        \mathcal{O} \left( \left(\frac{nd}{\gamma \mu m}  + \frac{b \ell}{\gamma \mu} \right) \log \frac{1}{\varepsilon} \right).
    \end{equation*}
\end{corollary}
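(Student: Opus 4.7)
\textbf{Proof proposal for Corollary \ref{cor:cor2}.}
The plan is to specialize the recursion provided by Theorem \ref{thm:pl} to each of the three parameter regimes. All three parts share a common simplification: since $\alpha = 1 - \gamma\mu/2$, we have $1-\alpha = \gamma\mu/2$, and consequently
\[
\frac{\gamma(1-\alpha^{m})}{1-\alpha} \leq \frac{\gamma}{1-\alpha} = \frac{2}{\mu}.
\]
The three parts then differ only in how one handles the sum $S_\tau := \sum_{i=0}^{\tau}\beta_i^{2}/\alpha^{m(i+1)}$.

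For part (i), the choice $\beta_\tau = \alpha^{m(\tau+1)/2}\eta_\tau$ is engineered so that the factors $\alpha^{-m(i+1)}$ cancel exactly: $\beta_i^{2}/\alpha^{m(i+1)} = \eta_i^{2}$, whence $S_\tau \leq \sum_{i=0}^{\infty}\eta_i^{2} \leq \tilde{C}$. Substituting into Theorem \ref{thm:pl} and using the prefactor bound $\frac{\gamma(1-\alpha^{m})}{1-\alpha}\leq 2/\mu$ gives the claim. For part (ii), $\beta_\tau\equiv\beta$ turns $S_\tau$ into a geometric series with ratio $\alpha^{-m}>1$:
\[
S_\tau \;=\; \beta^{2}\sum_{i=0}^{\tau}\alpha^{-m(i+1)} \;=\; \beta^{2}\,\frac{\alpha^{-m(\tau+1)}-1}{1-\alpha^{m}}.
\]
Multiplying by the outer prefactor $\alpha^{m(\tau+1)}$ that appears in the theorem gives $\alpha^{m(\tau+1)}S_\tau = \beta^{2}(1-\alpha^{m(\tau+1)})/(1-\alpha^{m})$, and multiplying further by $\gamma(1-\alpha^{m})/(1-\alpha)$ produces $\beta^{2}\gamma(1-\alpha^{m(\tau+1)})/(1-\alpha)\leq (2/\mu)\beta^{2}$. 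Folding in the bracket from Theorem \ref{thm:pl} yields the constant
\[
\bar{C} \;=\; \frac{2}{\mu}\!\left[\frac{3L^{2}d^{2}}{\ell}+\frac{L^{2}d}{2}\right].
\]

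For part (iii), I start from the bound in (ii). Choosing $\beta^{2}=\varepsilon/(2\bar{C})$ caps the second term at $\varepsilon/2$; to bound the first term by $\varepsilon/2$, use $-\log\alpha = -\log(1-\gamma\mu/2) \geq \gamma\mu/2$, so that
\[
T \;\geq\; \frac{2}{m\gamma\mu}\log\!\frac{2(F(x_{0}^{0})-\min F)}{\varepsilon} \;=\; \mathcal{O}\!\left(\frac{1}{m\gamma\mu}\log\frac{1}{\varepsilon}\right)
\]
suffices. The per-outer-iteration cost is then: one full-gradient surrogate $g(x_0^\tau,\beta_\tau)$ evaluated through all $d$ canonical directions, each requiring $n$ component-function evaluations, giving $\mathcal{O}(nd)$; plus $m$ inner iterations each costing $\mathcal{O}(b(\ell+1))$ evaluations for the pair of structured surrogates $\hat g_i(x_k^\tau,\cdot)$ and $\hat g_i(x_0^\tau,\cdot)$, giving $\mathcal{O}(mb\ell)$ per outer loop. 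Multiplying by $T$ and using the bound above produces
\[
\mathcal{O}\!\left(\frac{nd + mb\ell}{m\gamma\mu}\log\frac{1}{\varepsilon}\right) \;=\; \mathcal{O}\!\left(\left(\frac{nd}{m\gamma\mu}+\frac{b\ell}{\gamma\mu}\right)\log\frac{1}{\varepsilon}\right),
\]
which is exactly the claimed complexity.

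The routine steps are the geometric series in (ii) and the bookkeeping in (iii). The only real subtlety is verifying that the chosen $\beta_\tau$ in (i) leaves the sum $S_\tau$ summable uniformly in $\tau$: here the construction $\beta_\tau=\alpha^{m(\tau+1)/2}\eta_\tau$ is designed precisely so that the exponentially growing factor $\alpha^{-m(i+1)}$ is neutralized, reducing the control of $S_\tau$ to the hypothesis $\sum_i \eta_i^{2} \leq \tilde{C}$. Since no new analytic estimate is needed beyond Theorem \ref{thm:pl} itself, I do not anticipate any substantial obstacle.
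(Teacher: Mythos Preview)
Your proposal is correct and follows essentially the same route as the paper: specialize the recursion of Theorem~\ref{thm:pl}, use $\gamma/(1-\alpha)=2/\mu$, compute (or cancel) the sum $S_\tau$ in each regime, and in (iii) combine the $\varepsilon/2$ split with the per-iteration function-evaluation count. If anything, your handling of part~(i) is slightly cleaner than the paper's, which inserts an extra factor $\frac{1-\alpha^{m(\tau+1)}}{1-\alpha^{m}}\ge 1$ before bounding by $1$; you go directly from $S_\tau\le\tilde C$ to the final estimate.
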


\paragraph{Discussion.} The rate in Theorem \ref{thm:pl} is composed of two terms: one dependent on the initialization and another representing the approximation error. The first term, since $\alpha < 1$, decreases exponentially fast, while the second term depends on the choice of the sequence $\beta_\tau$. This indicates that the convergence rate is strongly influenced by the choice of this parameter. Notice that, the result obtained matches with the results of first-order counterparts \cite{prox_svrg_nconv,prox_svrg_li}, with an additional error term. In Corollary \ref{cor:cor2}, we present rates for several choices of $\beta_\tau$. Under choice (i), where $\beta_\tau = \alpha^{(\tau + 1) m} \eta_\tau$ with $\eta_\tau$ a square-summable sequence, we observe that Algorithm \ref{algo:osvrz} converges to a global minimum exponentially fast. %
In point (ii), where $\beta_\tau$ is chosen as a constant $\beta > 0$, the algorithm converges to an error region near a stationary point, where the size of this region depends on $\beta^2$.  To the best of our knowledge, the only other zeroth-order methods providing analysis under Assumption \ref{ass:pl} are those introduced in \cite{Kazemi2024}. However, the authors focused solely on the complexity of these algorithms and did not perform the analysis under different choices of $\beta_\tau$. In particular, they did not consider to choose $\beta_\tau$ as a sequence, which would allow convergence to a global minimum rather than an error region. In point (iii), we analyze the complexity of Algorithm \ref{algo:osvrz}. Our results show that Algorithm \ref{algo:osvrz} achieves the same complexity as these methods. Moreover, notice that our stepsize can be larger than the one proposed in ProxSVRG+[RandSGE] \cite{Kazemi2024}. It is larger also than the one proposed in ProxSVRG+ (with coordinate directions) \cite{Kazemi2024}. Notice that the complexity term depends on $\varepsilon$ as $\log(1/\varepsilon)$, as for the first-order ProxSVRG in the same setting \cite{prox_svrg_li}.

\section{Experiments}\label{sec:experiments}
In this section, we present numerical experiments to evaluate the performance of our algorithm. Specifically, we compare Algorithm \ref{algo:osvrz} with ZO-ProxSVRG+\cite{Kazemi2024,Huang_Gu_Huo_Chen_Huang_2019}, ZO-PSpider+ \cite{Kazemi2024} and RSPGF \cite{Ghadimi2016}.
We consider two experiments: LASSO minimization and binary classification. In the following experiments, we fix $b = 1$ for our algorithm and tune the remaining parameters via grid search. For the other methods, since the number of directions $\ell$ is set by design to $\ell = 1$ (for ZO-ProxSVRG+ and ZO-PSpider+ with RandSGE and GaussSGE) or $\ell = d$ (for ZO-ProxSVRG+ and ZO-PSpider+ with CoordSGE), we tune $b$ via grid search with the remaining parameters. To present our findings, we run every experiment $10$ times and report the mean and standard deviation of the results. In Figures \ref{fig:osvrz_changing_l_gamma}, \ref{fig:least_squares} and \ref{fig:bbclass} we plot the mean sequence $F(x^\tau_0) - \min F$ after each function evaluation. Since performing an outer iteration requires performing several (stochastic) function evaluations, the values are repeated by that amount - see Table \ref{tab:algo_cost}. Details on the objective functions and the choice of the parameters for the algorithms are reported in Appendix \ref{app:exp_details}.
\begin{table}[ht]
    \centering
    \caption{Number of (stochastic) function evaluations required to perform an (outer) iteration.}
    \label{tab:algo_cost}
    \begin{tabular}{ll}
        \hline
        Algorithm & Number of function evaluations per iteration \\
        \hline
        RSPGF \cite{Ghadimi2016} & $\ell + 1$\\
        ZO-PSVRG (GaussSGE) \cite{Huang_Gu_Huo_Chen_Huang_2019}& $2 nd + 4 m b$\\
        ZO-PSVRG+ (RandSGE) \cite{Kazemi2024}& $2  nd + 4 m b$\\
        ZO-PSVRG+ (CoordSGE) \cite{Kazemi2024,Huang_Gu_Huo_Chen_Huang_2019}& $2 nd + 4 m b d$\\
        ZO-PSpider+ (RandSGE) \cite{Kazemi2024} & $2 nd + 4 m b$\\
        ZO-PSpider+ (CoordSGE) \cite{Kazemi2024} & $2nd + 4 m b d$\\
        {\bf VR-SZD (our)} & $n(d + 1) + 2 m b (\ell + 1)$\\
    \end{tabular}
\end{table}
\paragraph*{Choice of the parameters.} Here, we want to study how  the number of directions $\ell$ and the number of inner iterations $m$ affect the performance of Algorithm \ref{algo:osvrz}. We set a budget of $10^6$ function evaluations and we fix $b = 1$. We consider the following target function $F(x) := 0.5 \| Ax - y \|^2 + \lambda \| x \|_1$ with $A \in \mathbb{R}^{d \times d}$, $x,y \in \mathbb{R}^d$ and $d = 50$ - see Appendix \ref{app:exp_details} for further details.  
\begin{figure}[h]
    \centering
    \includegraphics[width=0.8\linewidth]{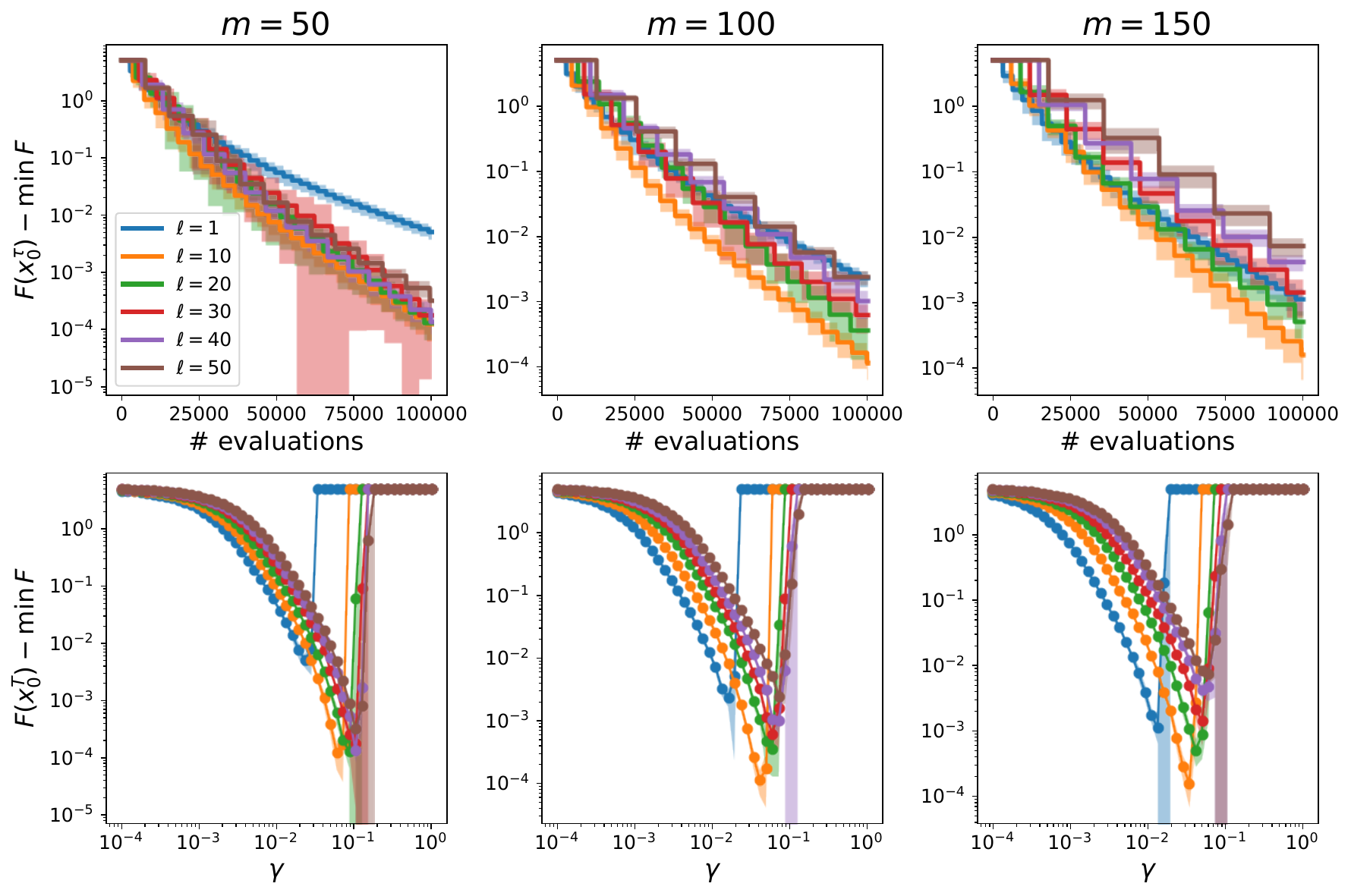}
    \caption{In the first line, function values per number of function evaluation with different numbers of directions $\ell$ and different number of inner iterations $m$. In the second line, function values at the last outer iterate running Algorithm \ref{algo:osvrz} with different stepsize $\gamma$, number of directions $\ell$ and inner iterations $m$. If the algorithm diverges, $F(x^T_0) - \min F$ is clipped to the initial value $ F(x^0_0) - \min F$.}
    \label{fig:osvrz_changing_l_gamma}
\end{figure}
In the first row of Figure \ref{fig:osvrz_changing_l_gamma}, we plot the average sequence $F(x_k^\tau) - \min F$ obtained by running Algorithm \ref{algo:osvrz} with different choices of $\ell$ and $m$. The step size is selected via grid search as the value that minimizes $F(x_0^T) - \min F$ - see the second row of Figure \ref{fig:osvrz_changing_l_gamma}. As observed, given a sufficiently large budget, increasing the number of directions ($\ell$) improves performance. However, since the function evaluation budget is finite, excessively large values of $\ell$ can lead to worse performance. This occurs because larger $\ell$ values require more function evaluations for the inner iterations, leaving fewer updates possible within the fixed budget. This trade-off becomes more pronounced as $m$ increases. In the second row of Figure \ref{fig:osvrz_changing_l_gamma}, we plot the average  $F(x^T_0) - \min F$, where $T$ denotes the last outer iteration, using different step size ($\gamma$) and number of inner iterations $m$. If the algorithm diverges, $F(x^T_0) - \min F$ is clipped to the initial value $ F(x^0_0) - \min F$. We observe that increasing the number of directions allows for selecting a larger step size, which results in improved performance. This improvement occurs because a more accurate approximation of the stochastic gradients reduces the error in the inner loop, enabling the use of a larger step size. Furthermore, our algorithm achieves comparable or even better performance using $\ell < d$. As explained earlier, this is because, with a fixed function evaluation budget, smaller $\ell$ values allow for more iterations.

\paragraph*{LASSO minimization.} Now, we compare our method with several state-of-the-art algorithms to minimize the function $F(x) = 0.5 \|Ax - y \|^2 + \lambda \| x \|_1$ where $A \in \mathbb{R}^{d \times d}$ and $x, y \in \mathbb{R}^d$ with $d = 50$ and $\lambda = 10^{-5}$. We run the different algorithms with a budget of $10^6$ function evaluations. %
\begin{figure}[H]
    \centering
    \includegraphics[width=\linewidth]{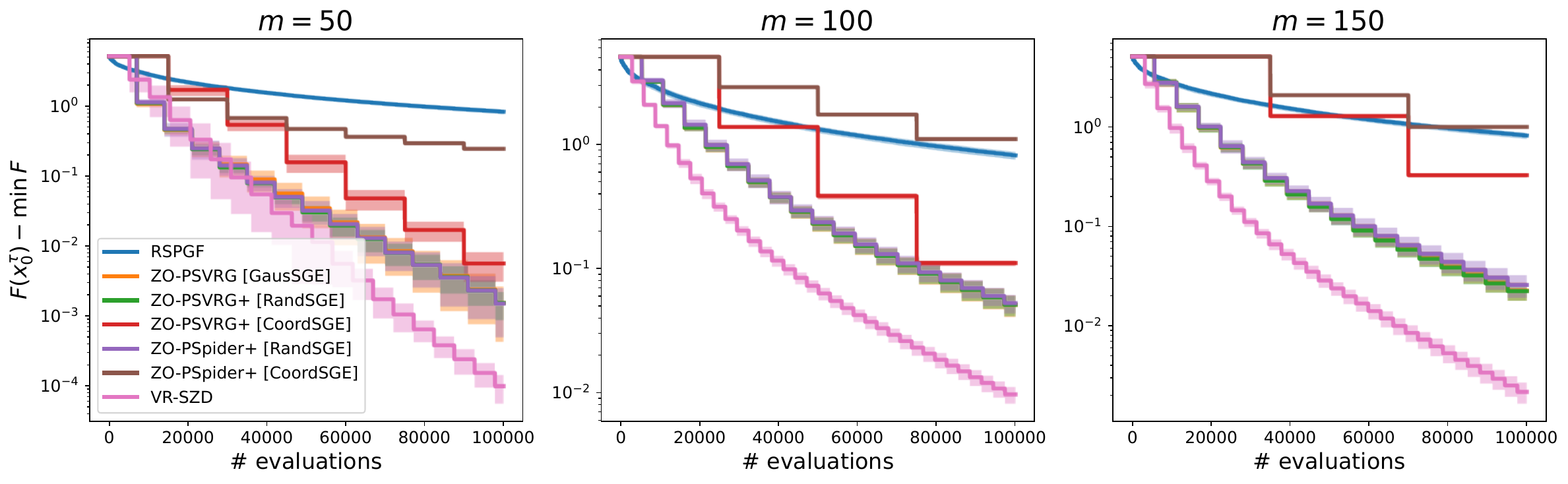}
    \caption{Comparison of different algorithms in LASSO minimization.}
    \label{fig:least_squares}
\end{figure}
\noindent In Figure \ref{fig:least_squares}, we observe that algorithms incorporating variance reduction techniques outperform RSPGF (which does not rely on any variance reduction technique). Moreover, VR-SZD shows better performance compared to other variance-reduced methods. 
This can be motivated by the different computation of the direction $v_k^\tau$. Indeed, for $b = 1$, the inner iterations of our algorithm require fewer evaluations compared to other methods (assuming $b = \ell$ for ZO-PSVRG+ and ZO-Spider+ with RandSGE and GaussSGE estimators where single directions are used). This allows our approach to perform more iterations than other algorithms and thus get a more refined solution. Additionally, the stochastic gradients in VR-SZD are approximated using $\ell$ structured directions rather than random ones that, as observed in previous works \cite{berahas2022theoretical}, typically yield better gradient approximations. Notice that our method outperforms also ZO-PSVRG+ [CoordSGE] and ZO-PSpider+ [CoordSGE], which use $d$ coordinate directions for stochastic gradient estimation. This occurs because these methods require an higher per-iteration costs. Indeed, for large $m$, these methods tend to achieve performance similar to RSPGF \cite{Ghadimi2016}, despite RSPGF having a worse theoretical rate.

\paragraph*{Black-box Binary Classification.} Now, we compare the different algorithms in solving a binary classification problem (see Appendix \ref{app:exp_details} for details on the objective function). %
We run the different algorithms fixing a budget of $10^7$ function evaluations and a number of inner iteration $m = 50$. %
In Figure \ref{fig:bbclass}, ignoring RSPGF, which does not exploit any variance reduction technique, we observe that methods using all coordinate directions perform worse than other variance-reduced algorithms. This is because the per-iteration cost is very high, resulting in fewer iterations being performed and, consequently, less accurate results, despite relying on better (stochastic) gradient approximations. We also observe that our algorithm performs better than the other methods. This can again be attributed to the fact that the iterations of our algorithm require fewer function evaluations and that the (stochastic) gradient approximations are constructed using structured directions.
\begin{figure}[H]
    \centering
    \includegraphics[width=0.45\linewidth]{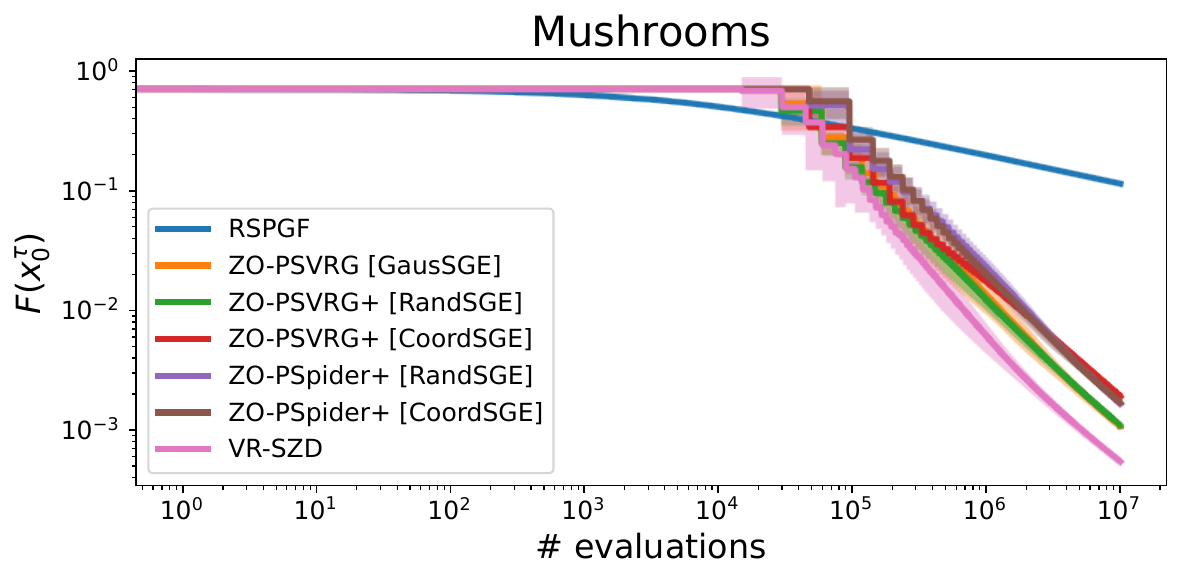}
    \includegraphics[width=0.45\linewidth]{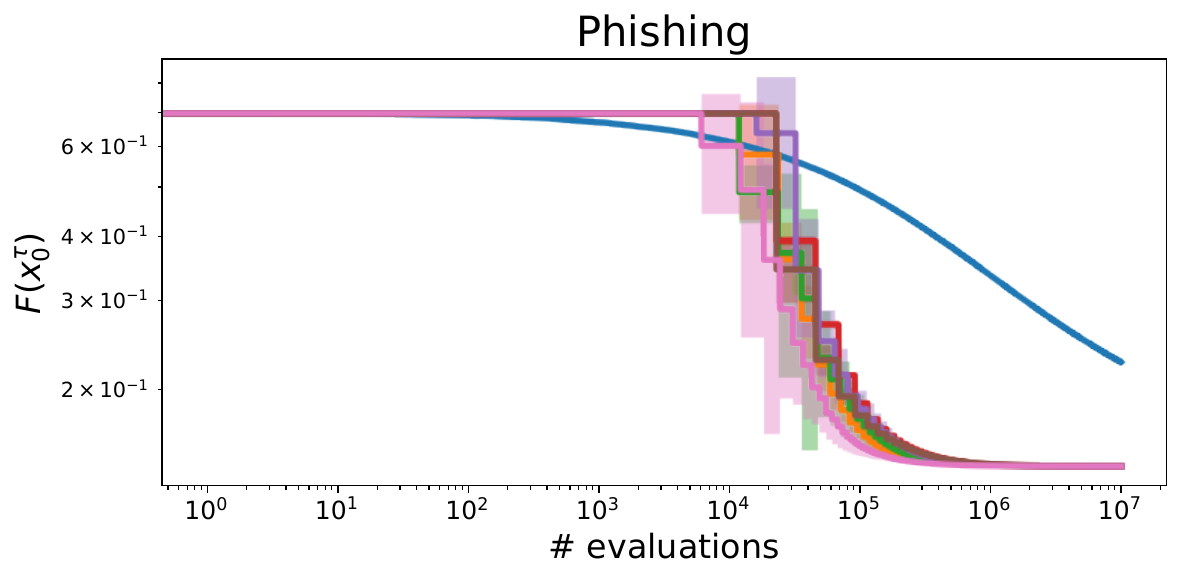}
    \includegraphics[width=0.45\linewidth]{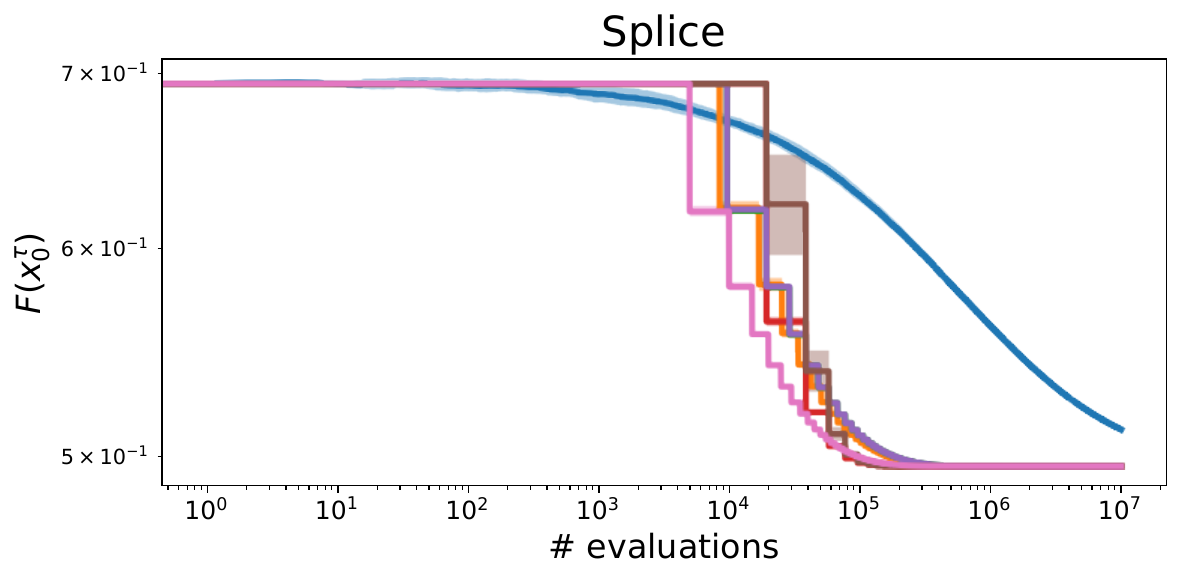}
    \includegraphics[width=0.45\linewidth]{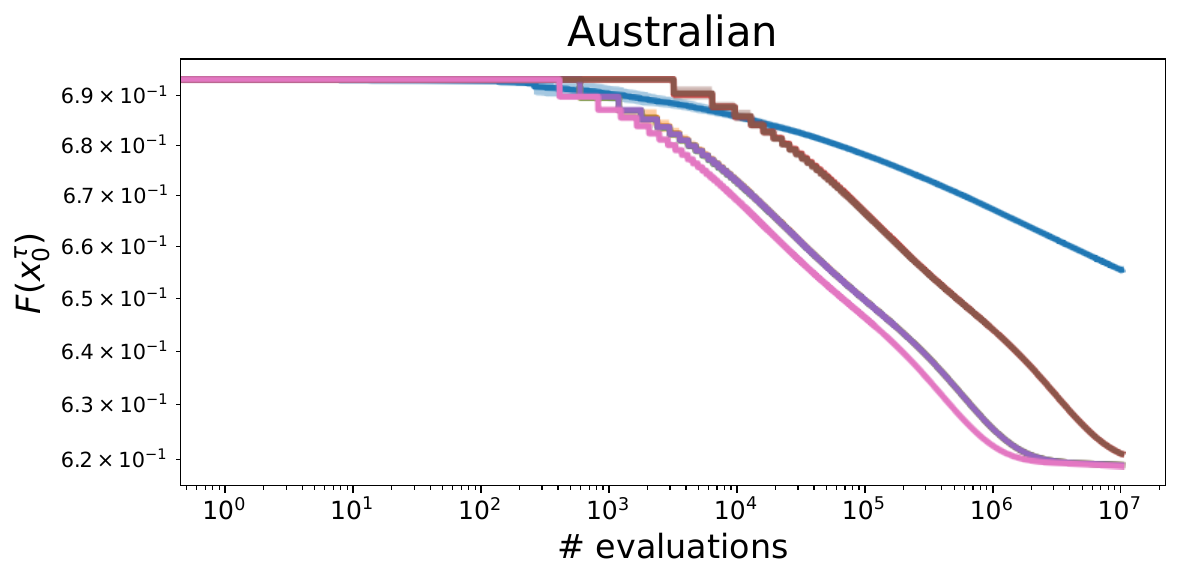}
    \caption{Comparison of different algorithms in solving the black-box binary classification problem.}%
    \label{fig:bbclass}
\end{figure}

\section{Conclusions}\label{sec:conclusion}
We introduced and analyzed VR-SZD, a structured variance-reduced finite-difference algorithm for zeroth-order optimization of non-convex finite-sum targets. We %
derived convergence rates for both non-convex and PL non-convex functions. This work opens several promising research directions. For instance, an interesting one would be the development of adaptive procedures to select the parameters or studying methods to replace the full-gradient approximation with a more cost-effective surrogate. Another potential research direction consists of extending the framework to a more general setting where the non-smooth function $h$ is a finite sum of non-smooth functions, utilizing stochastic proximal operators \cite{Traoré2024}.

\paragraph*{Acknowledgements.}
L. R. acknowledges the financial support of the European Commission (Horizon Europe grant ELIAS 101120237), the Ministry of Education, University and Research (FARE grant ML4IP R205T7J2KP) and the Center for Brains, Minds and Machines (CBMM), funded by NSF STC award CCF-1231216. S. V. acknowledges the support of the European Commission (grant TraDE-OPT 861137). The research by S. V., C. M. and C. T. has been supported by the MUR Excellence Department Project awarded to Dipartimento di Matematica, Universita di Genova, CUP D33C23001110001. L. R., S. V., M. R., C. M. acknowledge the financial support of the European Research Council (grant SLING 819789). L. R., S. V., C. M., C. T. acknowledge the support of the US Air Force Office of Scientific Research (FA8655-22-1-7034), the Ministry of Education, University and Research (grant BAC FAIR PE00000013 funded by the EU - NGEU) and MIUR (PRIN 202244A7YL). M. R., C. M. and S. V. are members of the Gruppo Nazionale per l’Analisi Matematica, la Probabilità e le loro Applicazioni (GNAMPA) of the Istituto Nazionale di Alta Matematica (INdAM). C. M. was also supported by the Programma Operativo Nazionale (PON) “Ricerca 
e Innovazione” 2014–2020. This work represents only the view of the authors. The European Commission and the other organizations are not responsible for any use that may be made of the information it contains.
\newpage
\bibliography{refs}

\newpage
\appendix  %

\section{Experimental Details}\label{app:exp_details}
In this appendix, we provide the details of the experiments presented in Section \ref{sec:experiments}. Every script has been implemented in Python3 (version 3.10.14) using PyTorch \cite{pytorch} v2.0.1, NumPy \cite{numpy} v1.26.4, Matplotlib \cite{matplotlib}  v3.9.2  libraries. The experiments have been
performed on a machine with the following specifications: \texttt{CPU: 64 x AMD EPYC 7301 16-Core}, \texttt{GPU: 1 x NVIDIA Quadro RTX 6000, RAM: 256GB} \footnote{Further details on the GPU can be found at the following link: \url{https://resources.nvidia.com/en-us-general-rtx/quadro-rtx-6000-data-sheet?xs=105058}}.

\paragraph*{Changing the number of directions \& LASSO minimization.} For these two experiments, we considered the problem of minimizing $F(x) := 0.5 \| A x - y \|^2 + \lambda\|x\|_1$ , where $ A \in \mathbb{R}^{d \times d}$, with $d = 50$ and $\lambda = 10^{-5}$ for both the experiments. In both experiments,  $x^* = [0, \cdots, 0]^\intercal \in \argmin F$. The matrices $A$ are built with the following procedure: we sample a random Gaussian matrix $\bar{A}$ (i.e., for every $i,j \in [d] , \bar{A}_{i,j} \sim \mathcal{N}(0, 1)$). We then compute the singular value decomposition  $\bar{A} = U \Sigma V$ and replace $\Sigma$ with a diagonal matrix $\bar{\Sigma}$, where the smallest value on the diagonal is 1, the largest is $\sqrt{10}$, and the values in between are linearly spaced. The final matrices are computed as $A = U \bar{\Sigma} V$. In this way, the objective functions for these experiments are $1$-strongly convex with an $L = 10$ Lipschitz continuous gradient. For the LASSO experiment, we set $\beta_\tau = 10^{-5}$ for every algorithm. Moreover, for every algorithm and every $m \in \{50, 100, 150\}$, we tune the stepsize and the number of direction through grid-search obtained by the Cartesian product of the possible stepsize $\{0.001, 0.01, 0.1, 1.0\}$ and the grid of number of directions $\{1, 10, 25, 50\}$. For RSPGF, the stepsize sequence is expressed as $\gamma_\tau = \gamma / \sqrt{\tau + 1}$ and the $\gamma$ is tuned with grid search using the grid defined previously. In Figure \ref{fig:least_squares}, we reported the results achieved by the different methods with the best combination of parameter (stepsize, number of directions) per number of inner iterations $m$. For the experiment of changing the parameters (Figure \ref{fig:osvrz_changing_l_gamma}) we fixed $\beta_\tau = 10^{-7}$. %

\paragraph*{Black-box Classification.} In experiment, we compared our method with other state-of-the-art algorithm to solve a black-box classification problems. Given a dataset $S = \{(x_i, y_i)\}_{i = 1}^n$ with $x_i \in \mathbb{R}^d$ and $y_i \in \{0,1\}$, we consider the following minimization problem
\begin{equation*}
    w^* \in \argmin\limits_{w} F(w) := \frac{1}{n} \sum\limits_{i = 1}^n L(w^\intercal x_i, y_i) + \lambda \| w \|_1,
\end{equation*}
where $\lambda$ is a regularization parameter and it is fixed to $10^{-5}$ and $L(x, y)$ is the binary cross-entropy loss defined as
\begin{equation*}
L(x, y) := -(y \log x + (1 - y) \log(1 - x)).    
\end{equation*}
The dimension of the input space $d$ and the number of functions $n$ are reported in Table \ref{tab:dataset_details}. For this experiment, we run the algorithms with $\beta_\tau = 10^{-5}$ for every $\tau \in \mathbb{N}$ and $m = 50$. For every algorithm, we tuned the number of directions $\ell$ used to approximate the stochastic gradients and the stepsize by grid-search. %
Datasets can be downloaded from LIBSVM \cite{libsvm} website \footnote{ \url{https://www.csie.ntu.edu.tw/~cjlin/libsvmtools/datasets/binary.html}. Note that data are partially pre-processed}. After downloading the data, we standardize them to zero mean and unit standard deviation and labels are set to be $0$ and $1$. %
\begin{table}[H]
\caption{Details on dataset used for the classification experiments}\label{tab:dataset_details}
\vspace{.1 in}
\begin{center}
\begin{tabular}{lcc}
\hline
Dataset & $n$ & $d$ \\
\hline
Mushrooms & 8124 & 112 \\ 
Phishing & 11055 & 68 \\
Australian & 690 & 14 \\
Splice & 1000 & 60 \\

\hline
\end{tabular}
\end{center}
\end{table}

\section{Limitations}\label{app:limitations}

In this appendix, we discuss the main limitations of Algorithm \ref{algo:osvrz}. The first limitation is the computational cost of our method. At each outer iteration $\tau$, the procedure computes an approximation of the gradient of the objective function $F$ and $m$ approximations of stochastic gradients using $\ell$ directions which corresponds to $n(d + 1) + 2m(\ell + 1)$ function evaluations. This implies that a large budget of function evaluations is required to execute the algorithm and, therefore, in cases where the objective function can only be evaluated a limited number of times, the algorithm may provides poor performance. This limitation is also present in other zeroth-order variance reduction methods (see, e.g., \cite{ji_improved_vr}) and could potentially be mitigated by using cheaper approximations of the full gradient. Another practical limitation is the number of parameters. Running Algorithm \ref{algo:osvrz} requires tuning several parameters, which can be challenging and computationally expensive. One way to address this issue is by defining adaptive strategies for parameter selection.

\section{Preliminary Results}\label{app:aux_results}
In this appendix, we state and collect lemmas and propositions required to prove the main results.
\paragraph*{Notation.} To improve the readability of the proofs, we denote with
\begin{equation*}
    \begin{aligned}
        \mathbb{E}_{I_k^\tau}[ \cdot ] := \mathbb{E}_{i_{1, k}^\tau}[\mathbb{E}_{i_{2, k}^\tau}[\cdots \mathbb{E}_{i_{b, k}^\tau} [ \cdot ]] ]  \quad \text{and} \quad \mathbb{E}_{G_k^\tau}[ \cdot ] := \mathbb{E}_{G_{1, k}^\tau}[ \mathbb{E}_{G_{2, k}^\tau}[\cdots \mathbb{E}_{G_{b, k}^\tau} [ \cdot ]]],
    \end{aligned}
\end{equation*}
the conditional expectations. Moreover, every time that we use such expectations we consider them to be conditioned to all past random variables. Furthermore, note that the random variables $i_{j,k}^\tau$  and $G_{j, k}^\tau$ for $j=1,\cdots,b$ are independent.
We denote the unit ball and the unit sphere as follow
\begin{equation*}
    \mathbb{B}^d := \{u \in \mathbb{R}^d \,|\, \|u\| \leq 1\} \qquad \text{and} \qquad \mathbb{S}^{d - 1} := \{v \in \mathbb{R}^d \,|\, \|v\| = 1\}.
\end{equation*}
\noindent Moreover, we denote with $\mu$ and $\sigma^{d - 1}$ the normalized Haar measure and the normalized spherical measure on $\mathbb{S}^{d - 1}$. We report here some general lemmas that we use in the proofs.
\begin{lemma}\label{lem:sum_orth}
    Let $(v_i)_{i = 1}^\ell \in \mathbb{R}^d$ such that for $i, j = 1, \cdots, \ell$ and $i \neq j$, $v_i^\intercal v_i = 1$ and $v_i^\intercal v_j = 0$.
    Then  
    \begin{equation*}
        \Bigg\| \sum\limits_{i = 1}^\ell v_i \Bigg\|^2 = \sum\limits_{i=1}^\ell \| v_i \|^2
    \end{equation*}
\end{lemma}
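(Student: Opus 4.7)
The plan is to expand the squared norm of the sum directly via the inner product and exploit the orthogonality hypothesis to discard the cross terms. Concretely, I would write
\begin{equation*}
    \Bigg\| \sum_{i=1}^\ell v_i \Bigg\|^2 = \Bigg\langle \sum_{i=1}^\ell v_i, \sum_{j=1}^\ell v_j \Bigg\rangle = \sum_{i=1}^\ell \sum_{j=1}^\ell v_i^\intercal v_j,
\end{equation*}
splitting the double sum into the diagonal part $i=j$ and the off-diagonal part $i\neq j$.

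Next I would invoke the hypotheses on the family $(v_i)_{i=1}^\ell$: since $v_i^\intercal v_j = 0$ whenever $i\neq j$, every off-diagonal term vanishes, leaving only $\sum_{i=1}^\ell v_i^\intercal v_i$. Because $v_i^\intercal v_i = \|v_i\|^2$ by definition of the Euclidean norm, this sum equals $\sum_{i=1}^\ell \|v_i\|^2$, which is exactly the right-hand side.

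I do not anticipate any real obstacle here: this is simply the Pythagorean identity for finitely many pairwise orthogonal vectors, and the assumption $v_i^\intercal v_i = 1$ is not even needed for the stated equality (it only pins down the common value to $\ell$). The one stylistic choice is whether to present it as a one-line expansion or to introduce the diagonal/off-diagonal splitting explicitly; I would favor the latter for readability, since the lemma is used as a building block in later variance-type computations where this decomposition is the conceptual point.
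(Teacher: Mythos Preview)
Your proof is correct and is precisely the standard Pythagorean expansion; the paper itself does not spell out any argument but simply defers to a linear algebra textbook, so your write-up is in fact more explicit than the paper's own ``proof.'' Your observation that the normalization $v_i^\intercal v_i = 1$ is unnecessary for the stated identity is also accurate.
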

\begin{proof}
    This is a standard result and is covered in many linear algebra textbooks, such as \cite{lin_alg_book}.

\end{proof}

\begin{lemma}\label{lem:sph_exp}
    Let $I \in \mathbb{R}^{d \times d}$ be the identity matrix and $\sigma$ be the normalized measure on the sphere. Then
    \begin{equation*}
        \int_{\mathbb{S}^{d - 1}} v v^\intercal \, d\sigma(v) = \frac{1}{d} I.
    \end{equation*}
\end{lemma}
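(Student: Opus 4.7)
The plan is to exploit the rotational invariance of the normalized spherical measure to force the integral $M := \int_{\mathbb{S}^{d-1}} vv^\intercal \, d\sigma(v)$ to be a scalar multiple of the identity, then pin down the scalar via a trace computation.

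First I would show that $M$ is invariant under orthogonal conjugation. For any $Q \in O(d)$, the change of variables $u = Q^\intercal v$ preserves $\mathbb{S}^{d-1}$ and, by the rotational invariance of the normalized Haar measure $\sigma$ on the sphere, preserves $\sigma$ as well. Hence
\begin{equation*}
Q^\intercal M Q = \int_{\mathbb{S}^{d-1}} (Q^\intercal v)(Q^\intercal v)^\intercal \, d\sigma(v) = \int_{\mathbb{S}^{d-1}} u u^\intercal \, d\sigma(u) = M,
\end{equation*}
so $M Q = Q M$ for every $Q \in O(d)$.

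Next I would deduce that $M = c I$ for some $c \in \mathbb{R}$. Since $M$ is symmetric, it has a real spectral decomposition; if it had two distinct eigenvalues, one could pick an orthogonal $Q$ that maps an eigenvector of one eigenvalue to an eigenvector of the other and reach a contradiction with $MQ = QM$. Equivalently, taking $Q$ to be a permutation of the standard basis shows all diagonal entries of $M$ are equal, and taking $Q$ to be a reflection $v_i \mapsto -v_i$ in a single coordinate shows all off-diagonal entries vanish.

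Finally I would compute the scalar by taking the trace: using linearity of the trace and $\mathrm{tr}(vv^\intercal) = \|v\|^2 = 1$ on the sphere,
\begin{equation*}
\mathrm{tr}(M) = \int_{\mathbb{S}^{d-1}} \|v\|^2 \, d\sigma(v) = 1,
\end{equation*}
while $\mathrm{tr}(cI) = cd$, so $c = 1/d$ and the identity follows. No real obstacle is expected here since the result is classical; the only subtle point is invoking the rotational invariance of $\sigma$ cleanly, which is built into the definition of the normalized Haar measure on the sphere.
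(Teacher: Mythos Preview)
Your argument is correct and is the standard symmetry-plus-trace proof of this classical identity. The paper does not actually give its own proof of this lemma; it simply refers the reader to \cite[Lemma 7.3(b)]{Gao2018}, so there is nothing substantive to compare against --- your self-contained argument is more informative than what the paper provides.
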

\begin{proof}
    This result follows the same line of \cite[Lemma 7.3, point (b)]{Gao2018}.
\end{proof}

\begin{lemma}\label{lem:equivmeasures}
    Let $f \colon \mathbb{S}^{d-1} \longrightarrow \mathbb{R}$ be an integrable function. Let $i \in [d]$.
    Then
    \begin{align*}
        \int_{O(d)} f(Ge_{i}) d\mu(G) = \int_{S^{d-1}} f(v) d\sigma^{d-1}(v).
    \end{align*}
\end{lemma}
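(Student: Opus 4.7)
The plan is to prove the identity by recognizing that the left-hand side is the integral of $f$ against the pushforward of the Haar measure under the evaluation map $G \mapsto Ge_i$, and showing that this pushforward equals $\sigma^{d-1}$ via a uniqueness argument.

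More concretely, define the continuous map $\Phi_i \colon O(d) \to \mathbb{S}^{d-1}$ by $\Phi_i(G) = G e_i$ (note that $\|G e_i\| = 1$ since $G$ is orthogonal). Let $\nu := (\Phi_i)_* \mu$ denote the pushforward probability measure on $\mathbb{S}^{d-1}$. By the standard change of variables formula for pushforwards,
\begin{equation*}
    \int_{O(d)} f(G e_i) \, d\mu(G) = \int_{\mathbb{S}^{d-1}} f(v) \, d\nu(v).
\end{equation*}
It therefore suffices to show that $\nu = \sigma^{d-1}$.

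To identify $\nu$, I would verify that it is invariant under the natural action of $O(d)$ on $\mathbb{S}^{d-1}$. For any $H \in O(d)$ and Borel set $A \subseteq \mathbb{S}^{d-1}$,
\begin{equation*}
    \nu(H A) = \mu\bigl(\{G \in O(d) : G e_i \in H A\}\bigr) = \mu\bigl(\{G \in O(d) : H^{-1} G e_i \in A\}\bigr) = \mu\bigl(H \cdot \Phi_i^{-1}(A)\bigr),
\end{equation*}
which equals $\mu(\Phi_i^{-1}(A)) = \nu(A)$ by the left invariance of the Haar measure on $O(d)$. Hence $\nu$ is a rotation-invariant probability measure on $\mathbb{S}^{d-1}$.

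Finally, I would invoke the well-known uniqueness of the rotation-invariant (Borel) probability measure on $\mathbb{S}^{d-1}$, which forces $\nu = \sigma^{d-1}$ and completes the proof. The only subtle point is this uniqueness claim; it is classical (follows, e.g., from Haar's theorem applied to the transitive action of the compact group $O(d)$ on $\mathbb{S}^{d-1}$ with stabilizer $O(d-1)$), so I would simply cite a standard reference rather than reprove it. Note that the result is independent of the particular index $i \in [d]$, since all canonical basis vectors can be mapped to one another by elements of $O(d)$, and indeed the argument above never uses any special property of $e_i$ beyond it being a unit vector.
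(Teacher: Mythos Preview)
Your proposal is correct and follows essentially the same approach as the paper: both define the evaluation map $G \mapsto Ge_i$, identify its pushforward of the Haar measure with $\sigma^{d-1}$, and then apply the change-of-variables formula. The only difference is that the paper cites a reference (Mattila, Theorem~3.7) for the pushforward identification, whereas you supply a self-contained argument via rotation invariance and uniqueness of the invariant probability measure on $\mathbb{S}^{d-1}$.
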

\begin{proof} Let $h_{e_i} \colon O(d) \longrightarrow \mathbb{S}^{d-1}$ the function $G \mapsto Ge_i$. $h_{e_i}$ is a measurable function as it is continuous. Thanks to \cite[Theorem 3.7]{mattila_1995}, we know that the uniformly distributed measure $\sigma^{d-1}$ on $\mathbb{S}^{d-1}$ is the pushforward measure ${h_{e_i}}_{\#}\mu$ of the normalized Haar measure $\mu$ on $O(d)$ by $h_{e_i}$. So, by the change of variable formula,
\begin{align*}
    \int_{O(d)} f(Ge_{i}) d\mu(G) &= \int_{O(d)} f\circ h_{e_i}(G) d\mu(G) \\
    &= \int_{S^{d-1}} f(v) d{h_{e_i}}_{\#}\mu(v) \\
    &= \int_{S^{d-1}} f(v) d\sigma^{d-1}(v).
\end{align*}
\end{proof}

\paragraph*{Smoothing.} The analysis of Algorithm \ref{algo:osvrz} relies on the fact that the structured gradient approximation in eq. \eqref{eqn:stochastic_approx} %
is an unbiased estimator of the gradient of a smooth approximation of the objective function $f$. More precisely, for any $\beta > 0$, let $f_\beta$ be the smooth approximation of $f$ defined as
\begin{equation}\label{eqn:smoothing}
    f_\beta(x) := \mathbb{E}_{u \sim \mathcal{U}(\mathbb{B}^{d})}[f(x + \beta u)],
\end{equation}
where $\mathcal{U}(\mathbb{B}^{d})$ denotes the uniform distribution over the unit ball $\mathbb{B}^{d}$. Notice that $f_\beta$ is differentiable even when $f$ is not \cite{Bertsekas1973} and the relationship between $f$ and $f_\beta$  depends on the properties of $f$ - see Proposition \ref{prop:smt_properties}. Moreover, with the following Lemma, we show that the gradient of the smooth approximation $f_\beta$ can be expressed as an expectation of the surrogate in eq. \eqref{eqn:stochastic_approx}.
\begin{lemma}[Smoothing Lemma]\label{lem:smt_lemma}
Let $(\Omega, \mathcal{F}, \mathbb{P})$ and $(\Omega^\prime, \mathcal{F}^\prime, \mathbb{P}^\prime)$ be probability spaces. Let $G : \Omega \to O(d)$ and $i : \Omega^\prime \to [n]$ be random variables where $O(d)$ is the orthogonal group and $[n] := \{1, \dots, n\}$. Assume that the probability distribution of $G$ is the (normalized) Haar measure and $i$ is uniformly distributed. Let $\beta > 0$ and let $\hat{g}$ be the stochastic gradient surrogate defined in eq. \eqref{eqn:stochastic_approx}. Then,
\begin{equation}\label{eqn:smt_lemma}
    \mathbb{E}_{G}[\mathbb{E}_i[\hat{g}_{i}(x, G, \beta)]] = \nabla f_{\beta}(x).
\end{equation}
\end{lemma}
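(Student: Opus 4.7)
\textbf{Proof proposal for Lemma \ref{lem:smt_lemma}.} The plan is to reduce the stochastic sum estimator to a single spherical expectation and then invoke the classical ball-smoothing identity. First, since $i$ is uniform over $[n]$ and $f = \frac{1}{n}\sum_{i=1}^n f_i$, linearity of expectation gives
\begin{equation*}
    \mathbb{E}_i[\hat g_i(x,G,\beta)] = \frac{d}{\ell}\sum_{j=1}^\ell \frac{f(x+\beta G e_j) - f(x)}{\beta}\,G e_j,
\end{equation*}
so all dependence on the random index $i$ is absorbed into $f$. Next, I would pull the expectation over $G$ inside the finite sum by linearity, reducing the problem to computing $\mathbb{E}_G\!\left[\frac{f(x+\beta G e_j) - f(x)}{\beta} G e_j\right]$ for each fixed $j \in \{1,\dots,\ell\}$.

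For each such $j$, I would apply Lemma \ref{lem:equivmeasures} with the integrand $v \mapsto \frac{f(x+\beta v)-f(x)}{\beta} v$. This converts the Haar expectation over $O(d)$ into a spherical expectation:
\begin{equation*}
    \mathbb{E}_G\!\left[\tfrac{f(x+\beta Ge_j) - f(x)}{\beta}Ge_j\right] = \int_{S^{d-1}}\tfrac{f(x+\beta v)-f(x)}{\beta}\,v\,d\sigma^{d-1}(v).
\end{equation*}
Crucially, the marginal distribution of each column $G e_j$ is the same uniform spherical measure, even though the columns are not jointly independent; this is precisely what Lemma \ref{lem:equivmeasures} delivers. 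By the symmetry $v \mapsto -v$ of $\sigma^{d-1}$, we have $\int_{S^{d-1}} v\,d\sigma^{d-1}(v) = 0$, so the $f(x)$ subtraction does not contribute and the integral reduces to $\frac{1}{\beta}\int_{S^{d-1}} f(x+\beta v)\,v\,d\sigma^{d-1}(v)$.

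The final step is to identify this spherical integral with $\nabla f_\beta$. Starting from $f_\beta(x) = \frac{1}{\mathrm{vol}(\mathbb{B}^d)}\int_{\mathbb{B}^d} f(x+\beta u)\,du$, I would differentiate under the integral sign and apply the divergence theorem (equivalently, rescale $u = \beta^{-1}(y-x)$ and use Stokes) to obtain the standard identity
\begin{equation*}
    \nabla f_\beta(x) = \frac{d}{\beta}\int_{S^{d-1}} f(x+\beta v)\,v\,d\sigma^{d-1}(v).
\end{equation*}
Plugging this in shows that each of the $\ell$ summands equals $\frac{1}{d}\nabla f_\beta(x)$, so summing and multiplying by the prefactor $d/\ell$ yields $\nabla f_\beta(x)$, as desired.

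\textbf{Main obstacle.} The step I expect to need the most care is the passage from the Haar expectation to the spherical one for each column $Ge_j$, because the joint distribution of $(Ge_1,\dots,Ge_\ell)$ is \emph{not} a product measure (the columns are orthonormal). Fortunately, since the estimator is a sum and not a product, only the marginal distribution of each $Ge_j$ matters, and Lemma \ref{lem:equivmeasures} handles this cleanly. The other ingredient, the Stokes-type identity for $\nabla f_\beta$, is classical (see e.g.\ Flaxman--Kalai--McMahan) and only requires $f$ to be locally integrable, which holds under Assumption \ref{ass:l_smooth}.
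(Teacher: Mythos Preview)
Your proposal is correct and follows essentially the same route as the paper: take $\mathbb{E}_i$ first to replace $f_i$ by $f$, apply Lemma \ref{lem:equivmeasures} termwise to reduce each $Ge_j$ to a uniform spherical variable, drop the $f(x)$ term by symmetry, and conclude with the Flaxman--Kalai--McMahan Stokes identity for $\nabla f_\beta$. Your remark that only the marginals of the columns matter (despite their joint dependence) is a nice clarification that the paper leaves implicit.
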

\begin{proof}
    Let $\hat{g}$ be the stochastic gradient surrogate defined in eq. \eqref{eqn:stochastic_approx}. By the definition of $\hat{g}$, we have
\begin{equation*}
    \mathbb{E}_{G}[\mathbb{E}_i[\hat{g}_{i}(x, G, \beta)]] = \mathbb{E}_{G}\left[\mathbb{E}_i\left[\frac{d}{\ell} \sum\limits_{j = 1}^\ell \frac{f_i(x + \beta G e_j) - f_i(x)}{\beta}Ge_j \right] \right].
\end{equation*}
By the definition of the objective function $f$ (eq. \eqref{eqn:problem}), we have
\begin{equation*}
    \mathbb{E}_{G}[\mathbb{E}_i[\hat{g}_{i}(x, G, \beta)]] = \mathbb{E}_{G}\left[\frac{d}{\ell} \sum\limits_{j = 1}^\ell \frac{f(x + \beta G e_j) - f(x)}{\beta}Ge_j \right].
\end{equation*}
By Lemma \ref{lem:equivmeasures}, 
\begin{equation*}
\begin{aligned}
\mathbb{E}_{G}\left[\frac{d}{\ell} \sum\limits_{j = 1}^\ell \frac{f(x + \beta G e_j) - f(x)}{\beta}Ge_j \right] &= \int_{O(d)} \frac{d}{\ell} \sum\limits_{j = 1}^\ell \frac{f(x + \beta G e_j) - f(x)}{\beta}Ge_j d \mu(G)\\
&= \frac{d}{\ell} \sum\limits_{j = 1}^\ell  \int_{S^{d - 1}} \frac{f(x + \beta v^{(j)}) - f(x)}{\beta}v^{(j)}  d\sigma^{d - 1}(v^{(j)})\\
&= d  \int_{S^{d - 1}} \frac{f(x + \beta v) - f(x)}{\beta}v  d\sigma^{d - 1}(v).
\end{aligned}
\end{equation*}
Since $v$ is uniformly random over the sphere, by symmetry, we have
\begin{equation*}
\begin{aligned}
d  \int_{S^{d - 1}} \frac{f(x + \beta v) - f(x)}{\beta}v  d\sigma^{d - 1}(v) &= d \int_{S^{d - 1}} \frac{ f(x + \beta v) }{\beta}  v  d\sigma^{d - 1}(v).    
\end{aligned}
\end{equation*}
As a consequence of Stokes’ Theorem (see \cite[Lemma 1]{flaxman2005online}), we have
\begin{equation*}
\begin{aligned}
d  \int_{S^{d - 1}} \frac{ f(x + \beta v) }{\beta}  v  d\sigma^{d - 1}(v) = \nabla f_\beta(x).    
\end{aligned}
\end{equation*}
This concludes the proof.
\end{proof}

\begin{proposition}[Smoothing Properties]\label{prop:smt_properties}
    Let $f_\beta$ be defined as in eq. \eqref{eqn:smoothing}. If $f$ satisfies Assumption \ref{ass:l_smooth}, then also $f_\beta$ satisfies it and, for every $x \in \mathbb{R}^d$, 
    \begin{equation}\label{eqn:smt_props_1}
        f_\beta(x) \leq f(x) + \frac{L}{2}\beta^2 \qquad \text{and} \qquad \|\nabla f_\beta(x) - \nabla f(x) \|^2 \leq \frac{L^2 d^2}{4}\beta^2.
    \end{equation}
    Moreover, for every $x\in\mathbb{R}^d$,
    \begin{equation}\label{eqn:smt_props_2}
        \| \nabla f_\beta(x) \|^2 \leq 2 \| \nabla f(x) \|^2 + \frac{L^2 d^2}{2} \beta^2.
    \end{equation}
\end{proposition}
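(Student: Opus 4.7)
The plan is to prove the four claims in turn, each following from the definition $f_\beta(x) = \mathbb{E}_{u \sim \mathcal{U}(\mathbb{B}^d)}[f(x+\beta u)]$ combined with either the descent lemma (quadratic upper bound for $L$-smooth functions) or the sphere representation of $\nabla f_\beta$ that has already been derived inside the proof of Lemma~\ref{lem:smt_lemma}.

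For the $L$-smoothness of $f_\beta$, I would first justify the exchange of gradient and expectation: since $\|\nabla f\|$ is continuous and $u$ ranges over the compact set $\mathbb{B}^d$, dominated convergence yields $\nabla f_\beta(x) = \mathbb{E}_u[\nabla f(x+\beta u)]$. Then Jensen's inequality followed by the Lipschitz property of $\nabla f$ gives $\|\nabla f_\beta(x) - \nabla f_\beta(y)\| \leq \mathbb{E}_u[\|\nabla f(x+\beta u) - \nabla f(y+\beta u)\|] \leq L\|x-y\|$. For the value gap $f_\beta(x) \leq f(x) + \tfrac{L}{2}\beta^2$, I would apply the descent inequality $f(x+\beta u) \leq f(x) + \beta\langle \nabla f(x), u\rangle + \tfrac{L}{2}\beta^2 \|u\|^2$ pointwise in $u$ and take the expectation; the linear term vanishes by the symmetry $u\sim -u$ on the unit ball, and $\mathbb{E}\|u\|^2 \leq 1$ closes the bound.

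The bound on $\|\nabla f_\beta(x) - \nabla f(x)\|^2$ is the delicate step, since the constant $\tfrac{L^2 d^2}{4}\beta^2$ carries a factor of $d^2$ and hence cannot be obtained from $\nabla f_\beta(x) = \mathbb{E}_u[\nabla f(x+\beta u)]$ (which would give a dimension-free bound). Instead, I would use the sphere representation derived inside the proof of Lemma~\ref{lem:smt_lemma}, namely $\nabla f_\beta(x) = \tfrac{d}{\beta}\int_{\mathbb{S}^{d-1}} f(x+\beta v)\, v\, d\sigma^{d-1}(v)$. Because $\int v\, d\sigma = 0$ and, by Lemma~\ref{lem:sph_exp}, $\int v v^\top d\sigma = \tfrac{1}{d} I$, I may subtract the constant term $f(x)$ and insert $\beta \langle \nabla f(x), v\rangle$ without changing the value of the integral, to get
\[
\nabla f_\beta(x) - \nabla f(x) = \frac{d}{\beta}\int_{\mathbb{S}^{d-1}} \bigl(f(x+\beta v) - f(x) - \beta \langle \nabla f(x), v\rangle\bigr)\, v\, d\sigma^{d-1}(v).
\]
The scalar factor in parentheses is the first-order Taylor remainder of $f$, and is bounded in absolute value by $\tfrac{L}{2}\beta^2 \|v\|^2 = \tfrac{L}{2}\beta^2$ thanks to $L$-smoothness. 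Taking norms inside the integral and using $\|v\|=1$ then yields $\|\nabla f_\beta(x) - \nabla f(x)\| \leq \tfrac{Ld\beta}{2}$, whose square is the claimed bound.

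Finally, inequality~\eqref{eqn:smt_props_2} follows immediately by writing $\nabla f_\beta(x) = \bigl(\nabla f_\beta(x) - \nabla f(x)\bigr) + \nabla f(x)$ and applying Young's inequality $\|a+b\|^2 \leq 2\|a\|^2 + 2\|b\|^2$, then substituting the previous estimate. The only subtle point throughout is choosing the sphere-based representation of $\nabla f_\beta$ in the third step to recover the required $d^2$ dependence; all the remaining calculations are routine applications of smoothness and Jensen-type arguments.
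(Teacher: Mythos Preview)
Your proposal is correct. The argument you give for \eqref{eqn:smt_props_2} is exactly the paper's own proof. For the claims in \eqref{eqn:smt_props_1} and the $L$-smoothness of $f_\beta$, the paper does not give an argument at all but simply cites \cite[Propositions~6.8 and~7.5]{Gao2018}; your explicit derivations (descent lemma plus symmetry for the value gap, and the sphere representation of $\nabla f_\beta$ combined with Lemma~\ref{lem:sph_exp} to extract the $d^2$ constant in the gradient gap) are precisely what those references contain, so you have effectively unpacked the citation rather than taken a different route.
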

\begin{proof}
    The inequalities in \eqref{eqn:smt_props_1} are standard results and the proofs can be found in different works, see for example \cite[Proposition 7.5, Proposition 6.8]{Gao2018}. To prove inequality \eqref{eqn:smt_props_2}, %
    \begin{equation*}
        \| \nabla f_\beta(x) \|^2 = \| \nabla f_\beta(x) - \nabla f(x) + \nabla f(x) \|^2 \leq 2 \| \nabla f_\beta(x) - \nabla f(x)\|^2 + 2\|\nabla f(x) \|^2,
    \end{equation*}
    where in the inequality we used $\|a + b\|^2 \leq 2\|a\|^2 + 2\|b\|^2$. %
    We conclude the proof by eq. \eqref{eqn:smt_props_1}.
\end{proof}

\subsection{Auxiliary Results}

\begin{lemma}\label{lem:prox_fun_bound}
Under Assumption \ref{ass:l_smooth}, let $x, v \in \mathbb{R}^d$, $\gamma > 0$ and $\bar{x} := \prox{\gamma h}{x - \gamma v}$. Then, for every $w \in \mathbb{R}^d$, we have
\begin{equation*}
    \begin{aligned}
            F(\bar{x}) &\leq F(w) + \scalT{\nabla f(x) - v}{\bar{x} - w} - \frac{1}{\gamma} \scalT{\bar{x} - x}{ \bar{x} - w} + \frac{L}{2}\|\bar{x} - x\|^2 + \frac{L}{2} \| w - x \|^2.
    \end{aligned}
\end{equation*}    
\end{lemma}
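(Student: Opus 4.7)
The plan is to combine three standard ingredients: (i) the descent lemma applied to $f$ at $\bar x$ around $x$; (ii) a "reverse" smoothness inequality relating $f(x)$ to $f(w)$ via $\nabla f(x)$; (iii) the subgradient inequality for $h$ coming from the first-order optimality condition of the proximal operator. Since $f = \frac{1}{n}\sum f_i$ is an average of $L$-smooth functions, $f$ itself is $L$-smooth under Assumption~\ref{ass:l_smooth}, and $h$ is convex and proximable, so all three ingredients are available.

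First I would use $L$-smoothness to write
\begin{equation*}
f(\bar x) \le f(x) + \scalT{\nabla f(x)}{\bar x - x} + \tfrac{L}{2}\|\bar x - x\|^2.
\end{equation*}
Next, I would need the slightly less standard bound
\begin{equation*}
f(x) \le f(w) + \scalT{\nabla f(x)}{x - w} + \tfrac{L}{2}\|x - w\|^2,
\end{equation*}
which follows by writing $f(x) - f(w) = \int_0^1 \scalT{\nabla f(w + t(x - w))}{x - w}\,dt$, adding and subtracting $\nabla f(x)$ inside the integrand, and controlling the remainder with the Lipschitz estimate $\|\nabla f(w + t(x - w)) - \nabla f(x)\| \le L(1 - t)\|x - w\|$. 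Summing the two inequalities makes the $\nabla f(x)$ terms telescope to $\scalT{\nabla f(x)}{\bar x - w}$, yielding
\begin{equation*}
f(\bar x) \le f(w) + \scalT{\nabla f(x)}{\bar x - w} + \tfrac{L}{2}\|\bar x - x\|^2 + \tfrac{L}{2}\|w - x\|^2.
\end{equation*}

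For the non-smooth part, the optimality condition for $\bar x = \prox{\gamma h}{x - \gamma v}$ gives
\begin{equation*}
-v - \tfrac{1}{\gamma}(\bar x - x) \in \partial h(\bar x),
\end{equation*}
and convexity of $h$ therefore yields
\begin{equation*}
h(\bar x) \le h(w) - \scalT{v}{\bar x - w} - \tfrac{1}{\gamma}\scalT{\bar x - x}{\bar x - w}.
\end{equation*}
Adding the bounds on $f(\bar x)$ and $h(\bar x)$, and collecting $\scalT{\nabla f(x)}{\bar x - w} - \scalT{v}{\bar x - w} = \scalT{\nabla f(x) - v}{\bar x - w}$, gives exactly the claimed inequality, since $f(w) + h(w) = F(w)$ and $f(\bar x) + h(\bar x) = F(\bar x)$.

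There is no real obstacle here: each step is elementary. The only point that requires mild care is the reverse-smoothness bound in the second step, where one evaluates the gradient at $x$ rather than at $w$; this is not the textbook form of the descent lemma but follows directly from the integral representation of $f(x) - f(w)$ and Cauchy--Schwarz. Everything else is a bookkeeping exercise combining the descent lemma with the prox subgradient inequality.
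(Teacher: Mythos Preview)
Your proof is correct: the descent lemma at $\bar x$, the ``reverse'' smoothness bound at $w$ (which you justify cleanly via the integral representation), and the subgradient inequality from the prox optimality condition combine exactly as you describe to give the stated inequality. The paper itself does not prove this lemma; it simply cites \cite[Lemma~13]{Kazemi2024}, so there is no in-paper argument to compare against---your self-contained derivation is a strict addition.
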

\begin{proof}
    The proof of this Lemma can be found in \cite[Lemma 13]{Kazemi2024}.
\end{proof}

\begin{lemma}\label{lem:prox_scal_bound}
    Under Assumption \ref{ass:l_smooth}, let $x, v \in \mathbb{R}^d$, $\gamma >0$, $\tilde{x} := \prox{\gamma h}{x - \gamma v}$ and $\bar{x} := \prox{\gamma h}{ x - \gamma \nabla f(x)}$. Then, 
    \begin{equation*}
        \scalT{\nabla f(x) - v}{\tilde{x} - \bar{x}} \leq \gamma \| \nabla f(x) - v \|^2.
    \end{equation*}
\end{lemma}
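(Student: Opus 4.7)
\textbf{Proof plan for Lemma \ref{lem:prox_scal_bound}.} The statement is a one-line consequence of two standard facts: the Cauchy--Schwarz inequality and the non-expansiveness of the proximal operator of a convex function. Since $h$ is convex (Assumption \ref{ass:l_smooth}), the map $y \mapsto \prox{\gamma h}{y}$ is $1$-Lipschitz on $\mathbb{R}^d$. The plan is therefore to bound the inner product by the product of norms and then control $\|\tilde x - \bar x\|$ by $\gamma \|\nabla f(x) - v\|$.

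Concretely, I would proceed as follows. First, apply Cauchy--Schwarz to obtain
\begin{equation*}
    \scalT{\nabla f(x) - v}{\tilde x - \bar x} \leq \| \nabla f(x) - v \| \cdot \| \tilde x - \bar x \|.
\end{equation*}
Second, invoke non-expansiveness of $\prox{\gamma h}{\cdot}$ with the two arguments $x - \gamma v$ and $x - \gamma \nabla f(x)$ to get
\begin{equation*}
    \| \tilde x - \bar x \| = \| \prox{\gamma h}{x - \gamma v} - \prox{\gamma h}{x - \gamma \nabla f(x)} \| \leq \| (x - \gamma v) - (x - \gamma \nabla f(x)) \| = \gamma \| \nabla f(x) - v \|.
\end{equation*}
Combining the two displays yields the claimed bound $\scalT{\nabla f(x) - v}{\tilde x - \bar x} \leq \gamma \| \nabla f(x) - v \|^2$.

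There is no real obstacle here; the only mild subtlety is recalling that the prox of a convex, proper, lower semicontinuous function is firmly non-expansive (hence $1$-Lipschitz), which is exactly what Assumption \ref{ass:l_smooth} guarantees for $h$. No smoothness of $f$ or any property of the finite-sum structure is used in this lemma.
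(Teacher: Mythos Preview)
Your proposal is correct and matches the paper's own proof essentially verbatim: Cauchy--Schwarz followed by non-expansiveness of $\prox{\gamma h}{\cdot}$ applied to the arguments $x-\gamma v$ and $x-\gamma\nabla f(x)$. There is nothing to add.
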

\begin{proof}
    The inequality holds by Cauchy-Schwarz and non-expansiveness of the prox operator,
    \begin{equation*}
        \scalT{\nabla f(x) - v}{\tilde{x} - \bar{x}} \leq \| \nabla f(x) - v\| \|\tilde{x} - \bar{x} \| \leq \gamma \| \nabla f(x) - v \|^2.        
    \end{equation*}
\end{proof}

\begin{lemma}[Approximation Error]\label{lem:approx_error}
    Under Assumption \ref{ass:l_smooth}, for $\beta > 0$, let $g(\cdot, \beta)$ be the deterministic gradient approximation defined in eq. \eqref{eqn:full_approx} and let $f_\beta$ be the smooth approximation of $f$ defined in eq. \eqref{eqn:smoothing}. Then, for every $x\in\mathbb{R}^d$,
    \begin{equation}\label{eqn:det_approx_error}
        \begin{aligned}
        \| g(x, \beta) - \nabla f(x) \|^2 &\leq \frac{L^2d}{4} \beta^2 \qquad \text{and} \qquad \| g(x, \beta) - \nabla f_\beta(x) \|^2 &\leq (1 + d)\frac{L^2 d}{2}\beta^2.
        \end{aligned}
    \end{equation}
    Moreover, let $\hat{g}_i(\cdot, G, \beta)$ be the stochastic gradient approximation defined in eq. \eqref{eqn:stochastic_approx} for an arbitrary $G \in O(d)$ and $h >0$. Then, for every $x, y \in \mathbb{R}^d$,
    \begin{equation}\label{eqn:sto_approx_lip}
        \mathbb{E}_{i} \mathbb{E}_G[ \| \hat{g}_i(x, G, \beta) - \hat{g}_i(y, G, \beta) \|^2] \leq 3\frac{d}{\ell}  \mathbb{E}_i \Bigg[ \| \nabla f_i(x) - \nabla f_i(y) \|^2\Bigg] + 3\frac{L^2d^2}{2 \ell} \beta^2.
    \end{equation}
\end{lemma}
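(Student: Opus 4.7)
I would prove the three inequalities separately, all via the standard descent-lemma estimate that for an $L$-smooth function $\varphi$ and any direction $u$ with $\|u\|=1$,
\begin{equation*}
    \left| \varphi(x + \beta u) - \varphi(x) - \beta \scalT{\nabla \varphi(x)}{u} \right| \leq \tfrac{L \beta^2}{2},
\end{equation*}
which follows from Assumption \ref{ass:l_smooth} by Taylor expansion with integral remainder.

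For the first inequality in \eqref{eqn:det_approx_error}, I would write $g(x,\beta) - \nabla f(x) = \sum_{j=1}^d \Big( \frac{f(x + \beta e_j) - f(x)}{\beta} - \scalT{\nabla f(x)}{e_j} \Big) e_j$. Since the canonical basis is orthonormal, Lemma~\ref{lem:sum_orth} turns the squared norm into a sum of squared scalar coefficients, each of which is bounded by $L^2\beta^2/4$ using the descent-lemma estimate above (applied to $f$, which is $L$-smooth as an average of $L$-smooth functions). Summing the $d$ contributions yields $\frac{L^2 d}{4}\beta^2$. The second inequality in \eqref{eqn:det_approx_error} then follows by $\|a+b\|^2\le 2\|a\|^2+2\|b\|^2$, combining the first inequality with the bound $\| \nabla f_\beta - \nabla f \|^2 \leq \frac{L^2 d^2}{4}\beta^2$ from Proposition~\ref{prop:smt_properties}.

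The third inequality \eqref{eqn:sto_approx_lip} is the main step. Since $\{Ge_j\}_{j=1}^\ell$ is orthonormal (because $G\in O(d)$), Lemma~\ref{lem:sum_orth} gives
\begin{equation*}
    \| \hat{g}_i(x, G, \beta) - \hat{g}_i(y, G, \beta) \|^2 = \frac{d^2}{\ell^2} \sum_{j = 1}^\ell \Big| \tfrac{f_i(x + \beta G e_j) - f_i(x)}{\beta} - \tfrac{f_i(y + \beta G e_j) - f_i(y)}{\beta} \Big|^2.
\end{equation*}
I would add and subtract $\scalT{\nabla f_i(x)}{Ge_j}$ and $\scalT{\nabla f_i(y)}{Ge_j}$ inside each term and apply $(a+b+c)^2 \le 3(a^2+b^2+c^2)$. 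The two descent-lemma residuals (at $x$ and at $y$) each contribute $\tfrac{L^2\beta^2}{4}$ per $j$, giving the constant term $\tfrac{3 L^2 d^2}{2\ell}\beta^2$. The remaining cross term is $\tfrac{3d^2}{\ell^2}\sum_{j=1}^{\ell} |\scalT{\nabla f_i(x)-\nabla f_i(y)}{Ge_j}|^2$.

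To close the argument I would take expectation over $G$ on the cross term and compute
\begin{equation*}
    \E_G\!\left[\sum_{j=1}^\ell (Ge_j)(Ge_j)^\top \right] = \ell \int_{\mathbb{S}^{d-1}} vv^\top \, d\sigma^{d-1}(v) = \frac{\ell}{d} I,
\end{equation*}
where the first equality uses Lemma~\ref{lem:equivmeasures} (the push-forward of Haar by $G\mapsto Ge_j$ is the uniform measure on $\mathbb{S}^{d-1}$, independently of $j$) and the second is Lemma~\ref{lem:sph_exp}. This collapses the cross term to $\tfrac{3d}{\ell}\|\nabla f_i(x)-\nabla f_i(y)\|^2$; taking then expectation over $i$ yields \eqref{eqn:sto_approx_lip}. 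The only subtle point, and the one I expect to double-check carefully, is the passage from a sum of Haar expectations involving different $Ge_j$ to the single spherical integral via Lemma~\ref{lem:equivmeasures}; once that is justified, the rest is bookkeeping.
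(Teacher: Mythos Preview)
Your proposal is correct and follows essentially the same approach as the paper's proof: orthonormality of the direction set (Lemma~\ref{lem:sum_orth}) to split the squared norm, the descent lemma to control the Taylor residuals, the triangle inequality $\|a+b\|^2\le 2\|a\|^2+2\|b\|^2$ together with Proposition~\ref{prop:smt_properties} for the second bound, and Lemmas~\ref{lem:equivmeasures} and~\ref{lem:sph_exp} to reduce the cross term to $\tfrac{3d}{\ell}\|\nabla f_i(x)-\nabla f_i(y)\|^2$. The only cosmetic difference is ordering: the paper first converts each $Ge_j$-integral to a spherical one (collapsing the $\ell$ summands before decomposing), whereas you decompose first and take the Haar expectation only on the quadratic cross term; both routes yield the same constants.
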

\begin{proof}
    To prove eq. \eqref{eqn:det_approx_error}, we observe that, by Lemma \ref{lem:sum_orth},
    \begin{equation*}
        \begin{aligned}
            \| g(x, \beta) - \nabla f(x) \|^2 &= \Bigg\| g(x, \beta) - \sum\limits_{j = 1}^d \scalT{\nabla f(x)}{e_j}e_j \Bigg\|^2\\
            &= \frac{1}{\beta^2} \sum\limits_{j = 1}^d (f(x + \beta e_j) - f(x) - \scalT{\nabla f(x)}{ \beta e_j})^2.
        \end{aligned}
    \end{equation*}
    By the Descent Lemma \cite{polyak1987introduction}, we have
    \begin{equation*}
        |f(x + \beta e_j) - f(x) - \scalT{\nabla f(x)}{ \beta e_j}| \leq \frac{L}{2} \| \beta e_i \|^2.
    \end{equation*}
    Using such an inequality, we get the claim
    \begin{equation}\label{eqn:det_bound}
        \begin{aligned}
            \| g(x, \beta) - \nabla f(x) \|^2 &\leq \frac{L^2d}{4} \beta^2.
        \end{aligned}
    \end{equation}
    To prove the second inequality in eq. \eqref{eqn:det_approx_error}, we add and subtract $\nabla f(x)$,
    \begin{equation*}
        \begin{aligned}
        \| g(x, \beta) - \nabla f_\beta(x)\|^2 &= \| g(x, \beta) - \nabla f_\beta(x) + \nabla f(x) - \nabla f(x) \|^2\\
        &\leq 2\| g(x, \beta) - \nabla f(x) \|^2 + 2\| \nabla f(x) - \nabla f_\beta(x) \|^2.            
        \end{aligned}
    \end{equation*}
    By inequality \eqref{eqn:det_bound} and Proposition \ref{prop:smt_properties}, we get the claim,
    \begin{equation*}
        \begin{aligned}
        \| g(x, \beta) - \nabla f_\beta(x)\|^2 \leq  \frac{L^2 d}{2}\beta^2  + \frac{L^2 d^2}{2}\beta^2.            
        \end{aligned}
    \end{equation*}
    Now, we prove eq. \eqref{eqn:sto_approx_lip}. By orthogonality and since $\|G e_j\|^2 = 1$ for every $j = 1, \cdots, \ell$,
    \begin{equation*}
        \begin{aligned}
            \mathbb{E}_{i} \mathbb{E}_G[ \| \hat{g}_i(x, G,\beta) - \hat{g}_i(y, G, \beta) \|^2] &= \frac{d^2}{\ell^2 \beta^2} \sum\limits_{j = 1}^\ell \mathbb{E}_{i} \mathbb{E}_G[ (f_i(x + \beta G e_j) - f_i(x) - (f_i(y + \beta G e_j) - f_i(y)))^2].
        \end{aligned}
    \end{equation*}
    By Lemma \ref{lem:equivmeasures},
    \begin{equation*}
        \begin{aligned}
            \mathbb{E}_{i}\mathbb{E}_G[ \| \hat{g}_i(x, G, \beta) - \hat{g}_i(y, G, \beta) \|^2] &= \frac{d^2}{\ell^2 \beta^2} \sum\limits_{j = 1}^\ell\mathbb{E}_{i}\mathbb{E}_{v^{(j)}} [ (f_i(x + \beta v^{(j)}) - f_i(x) - (f_i(y + \beta v^{(j)}) - f_i(y)))^2]\\
            &= \frac{d^2}{\ell \beta^2} \mathbb{E}_{i}\mathbb{E}_{v} [ (f_i(x + \beta v) - f_i(x) - (f_i(y + \beta v) - f_i(y)))^2].
        \end{aligned}
    \end{equation*}
    Adding and subtracting $\scalT{\nabla f_i(x)}{\beta v}$ and $\scalT{\nabla f_i(y)}{\beta v}$,
    \begin{equation*}
        \begin{aligned}
            \mathbb{E}_{i}\mathbb{E}_G[ \| \hat{g}_i(x, G, \beta) - \hat{g}_i(y, G, \beta) \|^2] &= \frac{d^2}{\ell \beta^2} \mathbb{E}_{i}\mathbb{E}_{v} \Bigg[ \Bigg(f_i(x + \beta v) - f_i(x) - \scalT{\nabla f_i(x)}{\beta v}\\
            &- \Bigg( f_i(y + \beta v) - f_i(y) - \scalT{\nabla f_i(y)}{\beta v} \Bigg)\\
            & + \scalT{\nabla f_i(x)}{\beta v} - \scalT{\nabla f_i(y)}{\beta v} \Bigg)^2 \Bigg].
        \end{aligned}
    \end{equation*}
    Since $\| a + b + c \|^2 \leq 3 \|a\|^2 + 3\|b\|^2 + 3\|c\|^2$,
    \begin{equation*}
        \begin{aligned}
            \mathbb{E}_{i}\mathbb{E}_G[ \| \hat{g}_i(x, G, \beta) - \hat{g}_i(y, G, \beta) \|^2] &\leq 3\frac{d^2}{\ell \beta^2} \mathbb{E}_{i}\mathbb{E}_{v} \Bigg[\Bigg( f_i(x + \beta v) - f_i(x) - \scalT{\nabla f_i(x)}{\beta v} \Bigg)^2 \Bigg]\\
            &+ 3\frac{d^2}{\ell \beta^2} \mathbb{E}_i \mathbb{E}_{v} \Bigg[ \Bigg( f_i(y + \beta v) - f_i(y) - \scalT{\nabla f_i(y)}{\beta v} \Bigg)^2 \Bigg]\\
            &+ 3\frac{d^2}{\ell \beta^2} \mathbb{E}_i \mathbb{E}_{v} \Bigg[ \Bigg( \scalT{\nabla f_i(x) - \nabla f_i(y)}{\beta v} \Bigg)^2\Bigg].
        \end{aligned}
    \end{equation*}
    By the Descent Lemma \cite{polyak1987introduction}, we get
    \begin{equation*}
        \begin{aligned}
            \mathbb{E}_{i}\mathbb{E}_G[ \| \hat{g}_i(x, G, \beta) - \hat{g}_i(y, G, \beta) \|^2] &\leq 3\frac{L^2d^2}{2 \ell} \beta^2 + 3\frac{d^2}{\ell} \mathbb{E}_i \mathbb{E}_{v} \Bigg[ \Bigg( \scalT{\nabla f_i(x) - \nabla f_i(y)}{ v} \Bigg)^2\Bigg]\\
            &= 3\frac{L^2d^2}{2 \ell} \beta^2 + 3\frac{d^2}{\ell} \mathbb{E}_i \scalT{\nabla f_i(x) - \nabla f_i(y)}{\mathbb{E}_v[v v^\intercal] (\nabla f_i(x) - \nabla f_i(y)) }. %
        \end{aligned}
    \end{equation*}
    By Lemma \ref{lem:sph_exp}, we get the claim,
    \begin{equation*}
        \begin{aligned}
            \mathbb{E}_{i}\mathbb{E}_G[ \| \hat{g}_i(x, G, \beta) - \hat{g}_i(y, G, \beta) \|^2] &\leq 3\frac{L^2d^2}{2 \ell} \beta^2 + 3\frac{d}{\ell}  \mathbb{E}_i \Bigg[ \| \nabla f_i(x) - \nabla f_i(y) \|^2\Bigg].
        \end{aligned}
    \end{equation*}

\end{proof}

\begin{lemma}[Bound on direction]\label{lem:bound_vk}
Under Assumption \ref{ass:l_smooth}, let $v_k^\tau$ be the direction vector at inner iteration $k \in [m]$ and outer iteration $\tau \in \mathbb{N}$ defined in eq. \eqref{eqn:osvrz_v}. Then,
     \begin{equation*}
        \begin{aligned}
            \mathbb{E}_{I_k^\tau, G_k^\tau}[\|v_k^\tau - \nabla f(x_k^\tau) \|^2 ] &\leq \frac{6d}{\ell b} L^2 \|x_k^{\tau} - x_0^\tau \|^2 + \left( \frac{3 d}{\ell b} + 2 + 3d \right)L^2 d \beta_\tau^2.%
        \end{aligned}
     \end{equation*}
\end{lemma}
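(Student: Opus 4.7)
My plan is to decompose the error $v_k^\tau - \nabla f(x_k^\tau)$ into a stochastic (variance) part and a deterministic (bias) part, where the splitting point is the conditional mean $\mathbb{E}_{I_k^\tau,G_k^\tau}[v_k^\tau]$. Using Lemma \ref{lem:smt_lemma}, the mean simplifies nicely: since the inner samples $(i_{j,k}^\tau,G_{j,k}^\tau)$ are i.i.d.\ and $\mathbb{E}[\hat{g}_{i}(x,G,\beta_\tau)]=\nabla f_{\beta_\tau}(x)$, one obtains
\begin{equation*}
    \mathbb{E}_{I_k^\tau,G_k^\tau}[v_k^\tau] \;=\; \nabla f_{\beta_\tau}(x_k^\tau) - \nabla f_{\beta_\tau}(x_0^\tau) + g(x_0^\tau,\beta_\tau).
\end{equation*}
Applying $\|a+b\|^2\leq 2\|a\|^2+2\|b\|^2$ with the mean as pivot yields two terms: the variance $2\,\mathbb{E}\|v_k^\tau-\mathbb{E} v_k^\tau\|^2$ and the (squared) bias $2\|\mathbb{E} v_k^\tau - \nabla f(x_k^\tau)\|^2$. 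This is the splitting that produces the factor $2$ visible in the claimed constants $\tfrac{6d}{\ell b}$ and $\tfrac{3d}{\ell b}+2+3d$.

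For the variance term, I would use independence of the $b$ samples to write
\begin{equation*}
    \mathbb{E}\|v_k^\tau-\mathbb{E} v_k^\tau\|^2 \;=\; \frac{1}{b^2}\sum_{j=1}^b \mathbb{E}\bigl\|\xi_j-\mathbb{E}\xi_j\bigr\|^2 \;\leq\; \frac{1}{b}\,\mathbb{E}\bigl\|\hat{g}_{i}(x_k^\tau,G,\beta_\tau)-\hat{g}_{i}(x_0^\tau,G,\beta_\tau)\bigr\|^2,
\end{equation*}
where $\xi_j$ denotes the $j$-th summand defining $v_k^\tau$ and the inequality uses $\mathrm{Var}\leq$ second moment. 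The right-hand side is precisely the quantity bounded by Lemma \ref{lem:approx_error} equation \eqref{eqn:sto_approx_lip}, which together with Assumption \ref{ass:l_smooth} (i.e.\ $\mathbb{E}_i\|\nabla f_i(x_k^\tau)-\nabla f_i(x_0^\tau)\|^2\leq L^2\|x_k^\tau-x_0^\tau\|^2$) gives the $\tfrac{3d}{\ell b}L^2\|x_k^\tau-x_0^\tau\|^2$ and $\tfrac{3L^2 d^2}{2\ell b}\beta_\tau^2$ contributions; the factor $2$ from the initial splitting doubles these to the constants in the statement.

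For the bias term, I rewrite
\begin{equation*}
    \mathbb{E}[v_k^\tau]-\nabla f(x_k^\tau) \;=\; \bigl[\nabla f_{\beta_\tau}(x_k^\tau)-\nabla f(x_k^\tau)\bigr] + \bigl[g(x_0^\tau,\beta_\tau)-\nabla f_{\beta_\tau}(x_0^\tau)\bigr],
\end{equation*}
and apply $\|a+b\|^2\leq 2\|a\|^2+2\|b\|^2$ again. The first bracket is controlled by the smoothing estimate $\|\nabla f_{\beta_\tau}(x)-\nabla f(x)\|^2\leq \tfrac{L^2d^2}{4}\beta_\tau^2$ from Proposition \ref{prop:smt_properties}, and the second bracket by the deterministic finite-difference error bound $(1+d)\tfrac{L^2 d}{2}\beta_\tau^2$ from the second inequality in \eqref{eqn:det_approx_error}. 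Summing the two gives $(d+2+2d)L^2 d\beta_\tau^2=(3d+2)L^2 d\beta_\tau^2$ after the outer factor of $2$ is absorbed.

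Adding the variance and bias contributions reproduces the stated coefficients. The proof is essentially mechanical once the centering decomposition is in place; the only real bookkeeping obstacle is being careful about where each factor of $2$ comes from so that the final constants $\tfrac{6d}{\ell b}$ and $\tfrac{3d}{\ell b}+2+3d$ appear exactly, without introducing extra slack or losing the tight dependence on $\ell b$ that drives the variance-reduction effect in the subsequent analysis.
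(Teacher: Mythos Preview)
Your proposal is correct and is essentially the same argument as the paper's proof, just framed in bias--variance language: the paper writes $v_k^\tau-\nabla f(x_k^\tau)=\tfrac1b\sum_j\delta_{j,k}^\tau+[g(x_0^\tau,\beta_\tau)-\nabla f_{\beta_\tau}(x_0^\tau)]+[\nabla f_{\beta_\tau}(x_k^\tau)-\nabla f(x_k^\tau)]$ and applies $\|a+b+c\|^2\le 2\|a\|^2+4\|b\|^2+4\|c\|^2$, which is identical to your two successive applications of $\|a+b\|^2\le 2\|a\|^2+2\|b\|^2$ since your variance term is exactly $\tfrac1b\sum_j\delta_{j,k}^\tau$ and your bias term is the sum of the other two. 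All subsequent steps (independence of the $b$ samples, $\mathrm{Var}\le$ second moment, then Lemma~\ref{lem:approx_error} and Proposition~\ref{prop:smt_properties}) match the paper line for line, and your constant tracking is accurate.
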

\begin{proof}
    To simplify the notation and improve readability, we denote
    \begin{equation}\label{eqn:direction_bound_delta}
        \delta_{j, k}^{\tau} := \hat{g}_{i_{j, k}^\tau}(x_k^\tau, G_{j,k}^\tau, \beta_\tau) - \hat{g}_{i_{j, k}^\tau}(x_0^\tau, G_{j,k}^\tau, \beta_\tau) - (\nabla f_{\beta_\tau}(x_k^\tau) - \nabla f_{\beta_\tau}(x_0^\tau))).
    \end{equation}
    Thus, adding and subtracting $\nabla f_{\beta_\tau}(x_k^\tau)$ and $\nabla f_{\beta_\tau}(x_0^\tau)$ to $v_k^\tau - \nabla f(x_k^\tau)$, we get
    \begin{equation}\label{eqn:bound_dir_eq1}
        \begin{aligned}
            \mathbb{E}_{I_k^\tau} [\mathbb{E}_{G_k^\tau} \left[\|v_k^\tau - \nabla f(x_k^\tau) \|^2 \, |\,\mathcal{H}_k^\tau \right] \, | \, \mathcal{F}_k^\tau ]&\leq 2\mathbb{E}_{I_k^\tau} \left[ \mathbb{E}_{G_k^\tau} \left[ \left\| \frac{1}{b} \sum\limits_{j = 1}^b \delta_{j, k}^\tau \right\|^2 \right]  \right]\\
            &+ 4 \| g(x_0^\tau, \beta_\tau) - \nabla f_{\beta_\tau}(x_0^\tau) \|^2\\
            &+ 4\| \nabla f_{\beta_{\tau}}(x_k^\tau)- \nabla f(x_k^\tau) \|^2 .
        \end{aligned}
    \end{equation}
    Now, we bound the first term of the inequality,
    \begin{equation*}
        \begin{aligned}
            2\mathbb{E}_{I_k^\tau} \left[ \mathbb{E}_{G_k^\tau} \left[ \left\| \frac{1}{b} \sum\limits_{j = 1}^b \delta_{j, k}^\tau \right\|^2 \right]  \right] &= \frac{2}{b^2} \Bigg( \sum\limits_{j = 1}^b  \mathbb{E}_{I_k^\tau} \left[ \mathbb{E}_{G_k^\tau} \left[ \| \delta_{j, k}^\tau \|^2 \right] \right]\\
            &+ \sum\limits_{j=1}^b \sum\limits_{t \neq j} \mathbb{E}_{I_k^\tau} \left[ \mathbb{E}_{G_k^\tau} \left[ \scalT{\delta_{j, k}^\tau}{\delta_{t, k}^\tau}  \right] \right] \Bigg).
        \end{aligned}
    \end{equation*}
    Since $i_{j, k}^\tau, i_{t, k}^\tau, G_{j,k}^\tau, G_{t, k}^\tau$ are independent for every $j \neq t$, we have
    \begin{equation*}
        \begin{aligned}
            2\mathbb{E}_{I_k^\tau} \left[ \mathbb{E}_{G_k^\tau} \left[ \left\| \frac{1}{b} \sum\limits_{j = 1}^b \delta_{j, k}^\tau \right\|^2 \right]  \right] &= \frac{2}{b^2} \Bigg( \sum\limits_{j = 1}^b  \mathbb{E}_{I_k^\tau} [ \mathbb{E}_{G_k^\tau} [ \| \delta_{j, k}^\tau \|^2 ] ]\\
            &+ \sum\limits_{j=1}^b \sum\limits_{t \neq j}  \scalT{\mathbb{E}_{I_k^\tau} [ \mathbb{E}_{G_k^\tau} [\delta_{j, k}^\tau \, ] ]}{ \mathbb{E}_{I_k^\tau} [ \mathbb{E}_{G_k^\tau} [\delta_{t, k}^\tau ]  ]}  \Bigg).
        \end{aligned}
    \end{equation*}
    By Lemma \ref{lem:smt_lemma}, we have that $\mathbb{E}_{I_k^\tau} [ \mathbb{E}_{G_k^\tau} [\delta_{j, k}^\tau ]  ] = 0$. Thus,
    \begin{equation*}
        \begin{aligned}
            2\mathbb{E}_{I_k^\tau} \left[ \mathbb{E}_{G_k^\tau} \left[ \left\| \frac{1}{b} \sum\limits_{j = 1}^b \delta_{j, k}^\tau \right\|^2 \right] \right] &= \frac{2}{b^2}  \sum\limits_{j = 1}^b  \mathbb{E}_{I_k^\tau} [ \mathbb{E}_{G_k^\tau} [ \| \delta_{j, k}^\tau \|^2 ]  ].
        \end{aligned}
    \end{equation*}
    By eq. \eqref{eqn:direction_bound_delta} and Lemma \ref{lem:smt_lemma}, we have 
    \begin{equation*}
        \begin{aligned}
            \mathbb{E}_{I_k^\tau} [ \mathbb{E}_{G_k^\tau} [ \| \delta_{j, k}^\tau \|^2 ]  ] &= \mathbb{E}_{I_k^\tau} [ \mathbb{E}_{G_k^\tau} [ \| \hat{g}_{i_{j, k}^\tau}(x_k^\tau, G_{j,k}^\tau, \beta_\tau) - \hat{g}_{i_{j, k}^\tau}(x_0^\tau, G_{j,k}^\tau, \beta_\tau) \|^2 ] ] +  \| \nabla f_{\beta_\tau}(x_k^\tau) - \nabla f_{\beta_\tau}(x_0^\tau) \|^2\\
            &-2 \mathbb{E}_{I_k^\tau} \left[ \mathbb{E}_{G_k^\tau} \left[ \scalT{\hat{g}_{i_{j, k}^\tau}(x_k^\tau, G_{j,k}^\tau, \beta_\tau) - \hat{g}_{i_{j, k}^\tau}(x_0^\tau, G_{j,k}^\tau, \beta_\tau) }{ \nabla f_{\beta_\tau}(x_k^\tau) - \nabla f_{\beta_\tau}(x_0^\tau)} \right]  \right]\\
            &= \mathbb{E}_{I_k^\tau} [ \mathbb{E}_{G_k^\tau} [ \| \hat{g}_{i_{j, k}^\tau}(x_k^\tau, G_{j,k}^\tau, \beta_\tau) - \hat{g}_{i_{j, k}^\tau}(x_0^\tau, G_{j,k}^\tau, \beta_\tau) \|^2 ]  ] -  \| \nabla f_{\beta_\tau}(x_k^\tau) - \nabla f_{\beta_\tau}(x_0^\tau) \|^2\\
            &\leq \mathbb{E}_{I_k^\tau} [ \mathbb{E}_{G_k^\tau} [ \| \hat{g}_{i_{j, k}^\tau}(x_k^\tau, G_{j,k}^\tau, \beta_\tau) - \hat{g}_{i_{j, k}^\tau}(x_0^\tau, G_{j,k}^\tau, \beta_\tau) \|^2 ]  ].
        \end{aligned}
    \end{equation*}
    By using this inequality in \eqref{eqn:bound_dir_eq1} , we get
    \begin{equation*}
        \begin{aligned}
            \mathbb{E}_{I_k^\tau} [\mathbb{E}_{G_k^\tau} \left[\|v_k^\tau - \nabla f(x_k^\tau) \|^2  \right]  ] &\leq \frac{2}{b^2}  \sum\limits_{j = 1}^b \mathbb{E}_{G_k^\tau} \left[\mathbb{E}_{I_k^\tau} \left[ \left\| \hat{g}_{i_{j, k}^\tau}(x_k^\tau, G_k^\tau, \beta_\tau) - \hat{g}_{i_{j, k}^\tau}(x_0^\tau, G_k^\tau, \beta_\tau) \right\|^2 \right]  \right]\\
            &+ 4 \| g(x_0^\tau, \beta_\tau) - \nabla f_{\beta_\tau}(x_0^\tau) \|^2\\
            &+ 4\| \nabla f_{\beta_{\tau}}(x_k^\tau)- \nabla f(x_k^\tau) \|^2 .
        \end{aligned}
    \end{equation*}
    By Proposition \ref{prop:smt_properties}, Lemma \ref{lem:approx_error} and Assumption \ref{ass:l_smooth},
    \begin{equation*}
        \begin{aligned}
            \mathbb{E}_{I_k^\tau} \left[\mathbb{E}_{G_k^\tau} \left[\|v_k^\tau - \nabla f(x_k^\tau) \|^2  \right]  \right] %
            &\leq \frac{6d}{\ell b} L^2 \|x_k^{\tau} - x_0^\tau \|^2 + \left( \frac{3 d}{\ell b} + 2 + 3d \right)L^2 d \beta_\tau^2. %
        \end{aligned}
    \end{equation*}

\end{proof}

\begin{lemma}[Function Values bound]\label{lem:funval_bound}
    Under Assumption \ref{ass:l_smooth},
    \begin{equation*}
        \begin{aligned}
            \mathbb{E}_{I_k^\tau}[\mathbb{E}_{G_k^\tau}[F(x_{k + 1}^\tau) ] ] &\leq F(x_k^\tau) - \left( \frac{1}{2} - L \gamma \right) \gamma \| \mathcal{G}_\gamma(x_k^\tau) \|^2 + \frac{6d}{\ell b} L^2 \gamma \|x_k^{\tau} - x_0^\tau \|^2\\
            &- \left(\frac{1}{2\gamma} - \frac{L}{2} \right) \mathbb{E}_{I_k^\tau} \left[\mathbb{E}_{G_k^\tau} \left[ \| x_{k + 1}^\tau - x_k^\tau \|^2  \right]  \right]\\
            &+ \left( \frac{3 d}{\ell b} + 2 + 3d \right)L^2 d \gamma \beta_\tau^2.
        \end{aligned}
    \end{equation*}
\end{lemma}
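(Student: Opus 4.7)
The plan is to introduce the \emph{auxiliary prox-gradient iterate} $\bar{x}_k^\tau := \prox{\gamma h}{x_k^\tau - \gamma \nabla f(x_k^\tau)}$, so that $\|\bar{x}_k^\tau - x_k^\tau\|^2 = \gamma^2 \|\mathcal{G}_\gamma(x_k^\tau)\|^2$ by definition of the gradient mapping, and then to apply Lemma~\ref{lem:prox_fun_bound} twice around this point. The first application uses $x = w = x_k^\tau$, $v = \nabla f(x_k^\tau)$ and $\bar{x} = \bar{x}_k^\tau$, which kills both the inner product and the $\tfrac{L}{2}\|w-x\|^2$ terms and yields the deterministic descent
\begin{equation*}
F(\bar{x}_k^\tau) \;\leq\; F(x_k^\tau) - \left(\tfrac{1}{\gamma} - \tfrac{L}{2}\right) \|\bar{x}_k^\tau - x_k^\tau\|^2.
\end{equation*}

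Next, I apply Lemma~\ref{lem:prox_fun_bound} again with $x = x_k^\tau$, $v = v_k^\tau$, $\bar{x} = x_{k+1}^\tau$ and $w = \bar{x}_k^\tau$. Substituting the first bound into the resulting inequality makes the coefficient of $\|\bar{x}_k^\tau - x_k^\tau\|^2$ equal to $-(1/\gamma - L)$ and leaves two cross terms, $\langle \nabla f(x_k^\tau) - v_k^\tau,\, x_{k+1}^\tau - \bar{x}_k^\tau\rangle$ and $-\tfrac{1}{\gamma}\langle x_{k+1}^\tau - x_k^\tau,\, x_{k+1}^\tau - \bar{x}_k^\tau\rangle$, together with $\tfrac{L}{2}\|x_{k+1}^\tau - x_k^\tau\|^2$.

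I would then process these cross terms. The first is bounded by $\gamma \|\nabla f(x_k^\tau) - v_k^\tau\|^2$ via Lemma~\ref{lem:prox_scal_bound}. The second is expanded by the polarization identity
\begin{equation*}
\langle x_{k+1}^\tau - x_k^\tau,\, x_{k+1}^\tau - \bar{x}_k^\tau\rangle = \tfrac{1}{2}\!\left(\|x_{k+1}^\tau - x_k^\tau\|^2 + \|x_{k+1}^\tau - \bar{x}_k^\tau\|^2 - \|\bar{x}_k^\tau - x_k^\tau\|^2\right).
\end{equation*}
After multiplying by $-1/\gamma$ and discarding the non-positive term $-\tfrac{1}{2\gamma}\|x_{k+1}^\tau - \bar{x}_k^\tau\|^2$, the coefficient of $\|\bar{x}_k^\tau - x_k^\tau\|^2$ shifts from $-(1/\gamma - L)$ to $-(1/(2\gamma) - L)$, and the coefficient of $\|x_{k+1}^\tau - x_k^\tau\|^2$ becomes $-(1/(2\gamma) - L/2)$, which matches the two target signs.

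To conclude, I take the conditional expectation over $I_k^\tau$ and $G_k^\tau$. Since $\bar{x}_k^\tau$ and $x_k^\tau$ are measurable with respect to the past, $\|\bar{x}_k^\tau - x_k^\tau\|^2 = \gamma^2\|\mathcal{G}_\gamma(x_k^\tau)\|^2$ factors out of the expectation and $-(1/(2\gamma) - L)\gamma^2 = -(1/2 - L\gamma)\gamma$ reproduces the first target term. Applying Lemma~\ref{lem:bound_vk} to $\gamma\,\mathbb{E}\|\nabla f(x_k^\tau) - v_k^\tau\|^2$ then produces the $\tfrac{6d}{\ell b}L^2\gamma\|x_k^\tau - x_0^\tau\|^2$ and $\left(\tfrac{3d}{\ell b} + 2 + 3d\right)L^2 d\gamma\beta_\tau^2$ contributions with exactly the stated constants. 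The main delicate point is to use the polarization identity rather than Young's inequality on the bilinear term $\langle x_{k+1}^\tau - x_k^\tau,\, x_{k+1}^\tau - \bar{x}_k^\tau\rangle$: any Young step with a free parameter would produce residual constants that fail to combine cleanly with the $-(1/\gamma - L)\|\bar{x}_k^\tau - x_k^\tau\|^2$ inherited from the first application, breaking the exact matching of coefficients in the claimed bound.
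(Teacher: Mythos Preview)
Your proposal is correct and follows essentially the same route as the paper: two applications of Lemma~\ref{lem:prox_fun_bound} around the auxiliary prox-gradient point, Lemma~\ref{lem:prox_scal_bound} for the first cross term, a rearrangement of the second cross term, and Lemma~\ref{lem:bound_vk} after taking the conditional expectation. The only cosmetic difference is in handling $-\tfrac{1}{\gamma}\langle x_{k+1}^\tau - x_k^\tau,\, x_{k+1}^\tau - \bar{x}_k^\tau\rangle$: the paper splits off $\|x_{k+1}^\tau - x_k^\tau\|^2$ and then applies Young's inequality to the remaining inner product, which yields exactly the same bound as your polarization-plus-drop step, so your closing remark that ``any Young step \ldots would fail to combine cleanly'' is slightly overstated.
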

\begin{proof}
    Let $\bar{x}_{k+1}^\tau := \prox{\gamma h}{x_k^\tau - \gamma \nabla f(x_k^\tau)}$. By Lemma \ref{lem:prox_fun_bound} with $\bar{x} = \bar{x}_{k + 1}^\tau, x =x_k^\tau, v = \nabla f(x_k^\tau)$ and $w = x_k^\tau$, we get
    \begin{equation}\label{eqn:fb_eq1}
        F(\bar{x}_{k + 1}^\tau) \leq F(x_k^\tau) - \left( \frac{1}{\gamma} - \frac{L}{2} \right) \| \bar{x}_{k + 1}^\tau - x_k^\tau \|^2.
    \end{equation}    
    Using again Lemma \ref{lem:prox_fun_bound} with $\bar{x} = x_{k + 1}^\tau, w = \bar{x}_{k + 1}^\tau, v = v_{k}^\tau$ and $x = x_k^\tau$, we get
    \begin{equation*}
        \begin{aligned}
            F(x_{k + 1}^\tau) &\leq F(\bar{x}_{k + 1}^\tau) + \scalT{\nabla f(x_k^\tau) - v_k^\tau}{x_{k + 1}^\tau - \bar{x}_{k + 1}^\tau} - \frac{1}{\gamma} \scalT{x_{k + 1}^\tau - x_k^\tau}{x_{k + 1}^\tau - \bar{x}_{k + 1}^\tau}\\
            &+\frac{L}{2} \| x_{k + 1}^\tau - x_k^\tau \|^2 +\frac{L}{2} \| \bar{x}_{k + 1}^\tau - x_k^\tau \|^2.
        \end{aligned}
    \end{equation*}
    By eq. \eqref{eqn:fb_eq1},
    \begin{equation*}
        \begin{aligned}
            F(x_{k + 1}^\tau) &\leq F(x_k^\tau) - \left( \frac{1}{\gamma} - L \right) \| \bar{x}_{k + 1}^\tau - x_k^\tau \|^2 +\frac{L}{2} \| x_{k + 1}^\tau - x_k^\tau \|^2 \\
            &+ \scalT{\nabla f(x_k^\tau) - v_k^\tau}{x_{k + 1}^\tau - \bar{x}_{k + 1}^\tau} - \frac{1}{\gamma} \scalT{x_{k + 1}^\tau - x_k^\tau}{x_{k + 1}^\tau - \bar{x}_{k + 1}^\tau}.
        \end{aligned}
    \end{equation*}
    By Lemma \ref{lem:prox_scal_bound}, recalling that $x_{k+1}^\tau = \prox{\gamma h}{x_k^\tau - \gamma v_k^\tau}$,
    \begin{equation}\label{eqn:fb_eq2}
        \begin{aligned}
            F(x_{k + 1}^\tau) &\leq F(x_k^\tau) - \left( \frac{1}{\gamma} - L \right) \| \bar{x}_{k + 1}^\tau - x_k^\tau \|^2 +\frac{L}{2} \| x_{k + 1}^\tau - x_k^\tau \|^2 \\
            &+ \gamma \| \nabla f(x_k^\tau) - v_k^\tau \|^2 - \frac{1}{\gamma} \scalT{x_{k + 1}^\tau - x_k^\tau}{x_{k + 1}^\tau - \bar{x}_{k + 1}^\tau}.
        \end{aligned}
    \end{equation}
    Now, to bound the last term, we add and subtract $x_k^\tau$, %
    \begin{equation*}
        \begin{aligned}
        - \frac{1}{\gamma} \scalT{x_{k + 1}^\tau - x_k^\tau}{x_{k + 1}^\tau - \bar{x}_{k + 1}^\tau} = - \frac{1}{\gamma} \| x_{k + 1}^\tau - x_k^\tau \|^2 + \frac{1}{\gamma} \scalT{x_{k +1}^\tau - x_k^\tau}{ \bar{x}_{k + 1}^\tau - x_k^\tau   }.
        \end{aligned}
    \end{equation*}
    By Young's inequality, we get
    \begin{equation*}
        \begin{aligned}
        - \frac{1}{\gamma} \scalT{x_{k + 1}^\tau - x_k^\tau}{x_{k + 1}^\tau - \bar{x}_{k + 1}^\tau} &\leq - \frac{1}{2\gamma} \| x_{k + 1}^\tau - x_k^\tau \|^2 + \frac{1}{2\gamma} \| \bar{x}_{k + 1}^\tau - x_k^\tau \|^2.
        \end{aligned}
    \end{equation*}
    Plugging this inequality in eq. \eqref{eqn:fb_eq2},
    \begin{equation*}
        \begin{aligned}
            F(x_{k + 1}^\tau) &\leq F(x_k^\tau) - \left( \frac{1}{2\gamma} - L \right) \| \bar{x}_{k + 1}^\tau - x_k^\tau \|^2\\
            &- \left(\frac{1}{2\gamma} - \frac{L}{2} \right) \| x_{k + 1}^\tau - x_k^\tau \|^2 + \gamma \| \nabla f(x_k^\tau) - v_k^\tau \|^2.
        \end{aligned}
    \end{equation*}
    Now, we take the expectations with respect to all $i_{j,k}^\tau$ and $G_{j, k}^\tau$ and conditioning on all past random variables. 
    By Lemma \ref{lem:bound_vk} and the definition of the generalized gradient $\mathcal{G}_\gamma$ (eq. \eqref{eqn:generalized_gradient}), we get
    \begin{equation*}
        \begin{aligned}
            \mathbb{E}_{G_k^\tau}[\mathbb{E}_{I_k^\tau}[F(x_{k + 1}^\tau) ] ] &\leq F(x_k^\tau) - \left( \frac{1}{2} - L \gamma \right) \gamma \| \mathcal{G}_\gamma(x_k^\tau) \|^2\\
            &- \left(\frac{1}{2\gamma} - \frac{L}{2} \right) \mathbb{E}_{G_k^\tau} \left[\mathbb{E}_{I_k^\tau} \left[ \| x_{k + 1}^\tau - x_k^\tau \|^2 \right] \right]\\
            &+ \frac{6d}{\ell b} L^2 \gamma \|x_k^{\tau} - x_0^\tau \|^2 + \left( \frac{3 d}{\ell b} + 2 + 3d \right)L^2 d \gamma \beta_\tau^2.
        \end{aligned}
    \end{equation*}
\end{proof}

\section{Proofs of the Main Results}\label{app:addproofs}

\subsection*{Proof of Theorem \ref{thm:nonconv_rate}}
We start the proof by defining the following Lyapunov function 
\begin{equation}\label{eqn:lyap}
    R_k^\tau := F(x_k^\tau) + c_k \| x_k^\tau - x_0^\tau \|^2,
\end{equation}
where $(c_k)_{k = 0}^{m} \subseteq \mathbb{R}_+$ with $c_m = 0$ will be defined recursively in the proof. Thus,
\begin{equation*}
    \begin{aligned}
        R_{k + 1}^\tau &= F(x_{k + 1}^\tau) + c_{k + 1} \| x_{k + 1}^\tau - x_0^\tau \|^2\\    
        &= F(x_{k + 1}^\tau) + c_{k + 1} \| x_{k + 1}^\tau - x_k^\tau + x_k^\tau - x_0^\tau \|^2\\
        &= F(x_{k + 1}^\tau) + c_{k + 1} \left[\| x_{k + 1}^\tau - x_k^\tau \|^2 + \| x_k^\tau - x_0^\tau \|^2 + 2 \scalT{x_{k + 1}^\tau - x_k^\tau}{x_k^\tau - x_0^\tau} \right].
    \end{aligned}
\end{equation*}
By Young's inequality with parameter $\theta_1 > 0$,
\begin{equation*}
    \begin{aligned}
        R_{k + 1}^\tau &\leq F(x_{k + 1}^\tau) + \left(1 + \frac{1}{\theta_1} \right)c_{k + 1} \| x_{k + 1}^\tau - x_k^\tau \|^2 + \left(1 + \theta_1 \right) c_{k + 1} \| x_k^\tau - x_0^\tau \|^2.
    \end{aligned}
\end{equation*}
Now, we take the expectation on all $G_{j,k}^\tau$ and $i_{j,k}^\tau$ conditioning on all past random variables. 
By Lemma \ref{lem:funval_bound} we have
\begin{equation*}
    \begin{aligned}
        \mathbb{E}_{G_k^\tau}[ \mathbb{E}_{I_k^\tau} [R_{k + 1}^\tau  ] ] &\leq F(x_k^\tau) - \left( \frac{1}{2} - L \gamma \right) \gamma \| \mathcal{G}_\gamma(x_k^\tau) \|^2\\
        &+ \left(\left(1 + \frac{1}{\theta_1} \right)c_{k + 1} + \frac{L}{2} - \frac{1}{2 \gamma} \right) \mathbb{E}_{G_k^\tau}[ \mathbb{E}_{I_k^\tau} [\| x_{k + 1}^\tau - x_k^\tau \|^2 ] ]\\
        &+ \left(\left(1 + \theta_1 \right) c_{k + 1} + \frac{6d}{\ell b} L^2 \gamma \right) \| x_k^\tau - x_0^\tau \|^2\\
        &+ \underbrace{\left( \frac{3 d}{\ell b} + 2 + 3d \right)L^2 d}_{=: C_1} \gamma \beta_\tau^2. 
    \end{aligned}
\end{equation*}
Let $c_k = \left(1 + \theta_1 \right) c_{k + 1} + \frac{6d}{\ell b} L^2 \gamma$. Then by eq. \eqref{eqn:lyap}
\begin{equation}\label{eqn:thm_nconv_1}
    \begin{aligned}
        \mathbb{E}_{G_k^\tau}[ \mathbb{E}_{I_k^\tau} [R_{k + 1}^\tau  ] ] &\leq R_k^\tau - \left( \frac{1}{2} - L \gamma \right) \gamma \| \mathcal{G}_\gamma(x_k^\tau) \|^2\\
        &+ \left(\left(1 + \frac{1}{\theta_1} \right)c_{k + 1} + \frac{L}{2} - \frac{1}{2 \gamma} \right) \mathbb{E}_{G_k^\tau}[ \mathbb{E}_{I_k^\tau} [\| x_{k + 1}^\tau - x_k^\tau \|^2  ] ]\\
        &+ C_1 \gamma \beta_\tau^2. 
    \end{aligned}
\end{equation}
For every $k \geq 0$,
\begin{equation}\label{eqn:ck_ub}
    \begin{aligned}
        c_{k} \leq c_0 = \frac{6 d L^2}{\ell b}\gamma \frac{(1 + \theta_1)^m - 1}{\theta_1} 
    \end{aligned}
\end{equation}
Thus, plugging eq. \eqref{eqn:ck_ub} in eq. \eqref{eqn:thm_nconv_1},
\begin{equation*}
    \begin{aligned}
        \mathbb{E}_{G_k^\tau}[ \mathbb{E}_{I_k^\tau} [R_{k + 1}^\tau  ] ] &\leq R_k^\tau - \left( \frac{1}{2} - L \gamma \right) \gamma \| \mathcal{G}_\gamma(x_k^\tau) \|^2\\
        &+ \left(\left(1 + \frac{1}{\theta_1} \right)c_0 + \frac{L}{2} - \frac{1}{2 \gamma} \right) \mathbb{E}_{G_k^\tau}[ \mathbb{E}_{I_k^\tau} [\| x_{k + 1}^\tau - x_k^\tau \|^2 ] ]\\
        &+ C_1 \gamma \beta_\tau^2. 
    \end{aligned}
\end{equation*}
Let $\theta_1 = \frac{1}{m}$. We have
\begin{equation*}
    c_0 \leq \frac{6 d L^2}{\ell b}\gamma \frac{\left(\left(1 + \frac{1}{m} \right)^m - 1 \right)}{\theta_1} \leq \frac{6 d L^2}{\ell b} \left( e  - 1 \right) m \gamma.
\end{equation*}
Thus, taking into account that, since $m> 1$ and $(m + 1)m \leq 2m^2$, we get
\begin{equation*}
    \begin{aligned}
        \mathbb{E}_{G_k^\tau}[ \mathbb{E}_{I_k^\tau} [R_{k + 1}^\tau ]] &\leq R_k^\tau - \left( \frac{1}{2} - L \gamma \right) \gamma \| \mathcal{G}_\gamma(x_k^\tau) \|^2\\
        &+ \left(\frac{12 d L^2}{\ell b} \left( e  - 1 \right) m^2 \gamma + \frac{L}{2} - \frac{1}{2 \gamma} \right) \mathbb{E}_{G_k^\tau}[ \mathbb{E}_{I_k^\tau} [\| x_{k + 1}^\tau - x_k^\tau \|^2  ] ]\\
        &+ C_1 \gamma \beta_\tau^2. 
    \end{aligned}
\end{equation*}
Since $\gamma < \min \left( \frac{1}{4L} , \frac{\sqrt{\ell b}}{\sqrt{32(e - 1) d} L m} \right)$, we have
\begin{equation*}
    \begin{aligned}
        \frac{12 d L^2}{\ell b} \left( e  - 1 \right) m^2 \gamma + \frac{L}{2} - \frac{1}{2 \gamma} < 0.
    \end{aligned}
\end{equation*}
This implies,
\begin{equation}\label{eqn:thm_nconv_2}
    \begin{aligned}
        \mathbb{E}_{G_k^\tau}[ \mathbb{E}_{I_k^\tau} [R_{k + 1}^\tau ] ] &\leq R_k^\tau - \left( \frac{1}{2} - L \gamma \right) \gamma \| \mathcal{G}_\gamma(x_k^\tau) \|^2 + C_1 \gamma \beta_\tau^2. 
    \end{aligned}
\end{equation}
Taking the full-expectation, summing for $k = 0, \cdots, m - 1$ and observing that $R_0^\tau = F(x_0^\tau)$ and $R_m^\tau = F(x_m^\tau) = F(x_0^{\tau + 1})$, we get
\begin{equation*}
    \begin{aligned}
        \mathbb{E} \left[F(x_0^{\tau + 1}) \right] &\leq \mathbb{E} \left[F(x_0^\tau) \right] - \left( \frac{1}{2} - L \gamma \right) \gamma \sum\limits_{k = 0}^{m - 1} \mathbb{E} \left[\| \mathcal{G}_\gamma(x_k^\tau) \|^2 \right] + C_1 m \gamma \beta_\tau^2. 
    \end{aligned}
\end{equation*}
Since $\frac{1}{2} - L \gamma  > 0$, summing for $\tau = 0, \cdots, T$, we have
\begin{equation*}
    \begin{aligned}
        \underbrace{\left( \frac{1}{2} - L \gamma \right)}_{=: \Delta} \gamma \sum\limits_{\tau = 0}^T \sum\limits_{k = 0}^{m - 1} \mathbb{E} \left[\| \mathcal{G}_\gamma(x_k^\tau) \|^2\right] &\leq \mathbb{E}\left[F(x_0^0) - F(x_0^{T + 1}) \right]  + C_1 m \gamma \sum\limits_{\tau = 0}^T \beta_\tau^2. 
    \end{aligned}
\end{equation*}
Noticing that $F(x_0^{T + 1}) \geq \min F$ and dividing by $\Delta \gamma m (T + 1)$, we conclude the proof:
\begin{equation*}
    \begin{aligned}
        \frac{1}{(T + 1) m} \sum\limits_{\tau = 0}^T \sum\limits_{k = 0}^{m - 1} \mathbb{E} \left[ \| \mathcal{G}_\gamma(x_k^\tau) \|^2 \right] &\leq \frac{ \mathbb{E}\left[F(x_0^0) - \min F \right]}{\Delta \gamma m (T + 1)}  + \frac{C_1}{\Delta (T + 1)} \sum\limits_{\tau = 0}^T \beta_\tau^2. 
    \end{aligned}
\end{equation*}

\subsection*{Proof of Corollary \ref{cor:cor1}}
From Theorem \ref{thm:nonconv_rate}, choosing the stepsize $\gamma < \min(\frac{1}{4L}, \frac{\sqrt{\ell b}}{10 L m \sqrt{d}})$, we have
\begin{equation*}%
    \begin{aligned}
        \frac{1}{(T + 1)m} \sum\limits_{\tau = 0}^T \sum\limits_{k = 0}^{m - 1} \mathbb{E} \left[ \| \mathcal{G}_\gamma (x_k^\tau) \|^2 \right] &\leq \frac{10 L \sqrt{d}}{\sqrt{\ell b}}\frac{ F(x_0^0) - \min F }{\Delta (T + 1)}\\
        &+ \left( \frac{3 d}{\ell b } + 2 + 3d \right) \frac{L^2 d}{\Delta (T + 1)} \sum\limits_{\tau = 0}^T \beta_\tau^2.
    \end{aligned} 
\end{equation*}
Note that, due to the choice of the stepsize, $\Delta \geq \frac{1}{4}$. Thus,
\begin{equation}\label{eqn:cor1_main_ineq}
    \begin{aligned}
        \frac{1}{(T + 1)m} \sum\limits_{\tau = 0}^T \sum\limits_{k = 0}^{m - 1} \mathbb{E} \left[ \| \mathcal{G}_\gamma (x_k^\tau) \|^2 \right] &\leq \frac{40 L \sqrt{d}}{\sqrt{\ell b}}\frac{ F(x_0^0) - \min F }{T + 1}\\
        &+ 4\left( \frac{3 d}{\ell b } + 2 + 3d \right) \frac{L^2 d}{T + 1} \sum\limits_{\tau = 0}^T \beta_\tau^2.
    \end{aligned} 
\end{equation}
Points (i) and (ii) can be proved by replacing the corresponding value of $\beta_\tau$ in eq. \eqref{eqn:cor1_main_ineq}. Indeed, with the choice (i), we have $\beta_\tau = \frac{\beta}{d} (\tau + 1)^{-\alpha}$ with $\alpha > 1/2$. Thus, we get
\begin{equation*}
    \begin{aligned}
        \frac{1}{(T + 1)m} \sum\limits_{\tau = 0}^T \sum\limits_{k = 0}^{m - 1} \mathbb{E} \left[ \| \mathcal{G}_\gamma (x_k^\tau) \|^2 \right] &\leq \frac{40 L \sqrt{d}}{\sqrt{\ell b}}\frac{ F(x_0^0) - \min F }{T + 1}\\
        &+ 4\left( \frac{3 d}{\ell b } + 2 + 3d \right) \frac{2 \alpha + 2}{2\alpha +1} \frac{L^2 \beta^2 }{ d (T + 1)}.
    \end{aligned}   
\end{equation*}
Instead, with the choice $\beta_\tau = \frac{\beta}{d}$ (ii) we get
\begin{equation*}
\begin{aligned}
        \frac{1}{(T + 1)m} \sum\limits_{\tau = 0}^T \sum\limits_{k = 0}^{m - 1} \mathbb{E} \left[ \| \mathcal{G}_\gamma (x_k^\tau) \|^2 \right] &\leq \frac{40 L \sqrt{d}}{\sqrt{\ell b}}\frac{ F(x_0^0) - \min F }{T + 1}\\
        &+ \underbrace{4\left( \frac{3 d}{\ell b } + 2 + 3d \right) \frac{L^2}{d}}_{=: C_2} \beta^2.
\end{aligned}
\end{equation*}
We prove point (iii) in a constructive way. We fix an $\varepsilon \in (0, 1)$. We upper bound eq.\eqref{eqn:cor1_main_ineq} with it and we consider $\beta_\tau = \beta$ i.e. 
\begin{equation*}
    \frac{40 L \sqrt{d}}{\sqrt{\ell b}}\frac{ F(x_0^0) - \min F }{T + 1}  + \underbrace{4\left( \frac{3 d}{\ell b } + 2 + 3d \right) L^2 d}_{=: C_3} \beta^2 \leq \varepsilon
\end{equation*}
Let $\beta = \sqrt{\frac{\varepsilon}{2C_3}}$. If we impose, %
\begin{equation*}
    \frac{40 L \sqrt{d}}{\sqrt{\ell b}}\frac{ F(x_0^0) - \min F }{T + 1} \leq \frac{\varepsilon}{2},
\end{equation*}
namely,
\begin{equation*}
    T + 1 \geq  \frac{80 L \sqrt{d} \left(F(x_0^0) - \min F \right)}{\sqrt{\ell b} \varepsilon},
\end{equation*}
we have $\frac{1}{(T + 1)m} \sum\limits_{\tau = 0}^T \sum\limits_{k = 0}^{m - 1} \mathbb{E} \left[ \| \mathcal{G}_\gamma (x_k^\tau) \|^2 \right] \leq \varepsilon.$ Considering that, to perform an outer iteration, $n (d + 1) + 2 m b (\ell + 1)$ (stochastic) function evaluations are required, the complexity of the algorithm is
\begin{equation*}
    \mathcal{O}\left( \left( n \frac{d \sqrt{d}}{\sqrt{\ell b}}  + \frac{m b (\ell + 1) \sqrt{d}}{\sqrt{\ell b}} \right)\varepsilon^{-1} \right).
\end{equation*}
With the choice $b = \lceil n^{2/3} \rceil$ , $m =  \lceil \sqrt{b} \rceil$ and $\ell = \lceil d / c \rceil$ for some $c > 0$, we get
\begin{equation*}
    \mathcal{O}\left( n^{2/3} d \varepsilon^{-1} \right).
\end{equation*}

\subsection*{Proof of Theorem \ref{thm:pl}}
Let $C(\gamma) := \left(\frac{1}{2\gamma} - \frac{L}{2} \right)$. By Lemma \ref{lem:funval_bound}, with total expectation, we have
\begin{equation}\label{eqn:eqn_pl_1}
    \begin{aligned}
        \mathbb{E}[F(x_{k + 1}^\tau)] &\leq \mathbb{E}[F(x_k^\tau)] - \left( \frac{1}{2} - L \gamma \right) \gamma \mathbb{E}[ \left\| \mathcal{G}_\gamma(x_k^\tau) \right\|^2]\\
        &+ \frac{6d}{\ell b}L^2 \gamma \mathbb{E}[ \left\| x_k^\tau - x_0^\tau \right\|^2]\\
        &\underbrace{- C(\gamma) \mathbb{E}[ \left\| x_{k+1}^\tau - x_k^\tau \right\|^2]}_{=:(a)} + \left( \frac{3d}{\ell b} + 2 + 3d \right) L^2 d \gamma \beta^2_\tau.
    \end{aligned}
\end{equation}
Now we bound $(a)$. Adding and subtracting $x_0^\tau$, we get
\begin{equation*}
    \begin{aligned}
        -C(\gamma) \mathbb{E}[ \left\| x_{k+1}^\tau - x_k^\tau \right\|^2] &= -C(\gamma) \mathbb{E}[ \left\| x_{k+1}^\tau - x_0^\tau + x_0^\tau - x_k^\tau \right\|^2]\\
        &= -C(\gamma) \mathbb{E}[ \left\| x_{k+1}^\tau - x_0^\tau \right\|^2] -C(\gamma) \mathbb{E}[ \left\| x_{k}^\tau - x_0^\tau \right\|^2]\\
        &- 2 C(\gamma) \scalT{x_{k + 1}^\tau - x_0^\tau}{x_0^\tau - x_k^{\tau}}.
    \end{aligned}
\end{equation*}
By Cauchy-Schwartz and Young inequality with parameter $\xi > 0$, we have
\begin{equation*}
    \begin{aligned}
        -C(\gamma) \mathbb{E}[ \left\| x_{k+1}^\tau - x_k^\tau \right\|^2] &\leq -C(\gamma) \mathbb{E}[ \left\| x_{k+1}^\tau - x_0^\tau \right\|^2] -C(\gamma) \mathbb{E}[ \left\| x_{k}^\tau - x_0^\tau \right\|^2]\\
        &+ 2 C(\gamma)\left[ \frac{1}{2 \xi} \|x_{k + 1}^\tau - x_0^\tau \|^2 + \frac{\xi}{2} \|x_0^\tau - x_k^{\tau}\|^2 \right]\\
        &= - \left( 1 - \frac{1}{\xi} \right) C(\gamma) \|x_{k + 1}^\tau - x_0^\tau \|^2 + \left( \xi - 1 \right) C(\gamma)\mathbb{E}[ \left\| x_{k}^\tau - x_0^\tau \right\|^2].
    \end{aligned}
\end{equation*}
Let $\theta_1 > 0$ and $\xi = \frac{1}{\theta_1} + 1$. Then,
\begin{equation*}
    \begin{aligned}
        -C(\gamma) \mathbb{E}[ \left\| x_{k+1}^\tau - x_k^\tau \right\|^2] &\leq -\frac{C(\gamma)}{1 + \theta_1} \mathbb{E}[ \left\| x_{k+1}^\tau - x_0^\tau \right\|^2] + \frac{C(\gamma)}{\theta_1} \mathbb{E}[ \left\| x_{k}^\tau - x_0^\tau \right\|^2].
    \end{aligned}
\end{equation*}
Plugging this upper bound in inequality \eqref{eqn:eqn_pl_1}, we get
    \begin{equation*}
        \begin{aligned}
        \mathbb{E}[F(x_{k + 1}^\tau) ] &\leq \mathbb{E}[F(x_k^\tau)] - \left( \frac{1}{2} - L \gamma \right) \gamma \mathbb{E}[\| \mathcal{G}_\gamma(x_k^\tau) \|^2] \\
        &- \frac{C(\gamma)}{1 + \theta_1} \mathbb{E} \left[ \| x_{k + 1}^\tau - x_0^\tau \|^2  \right] + \left(\frac{6d}{\ell b}L^2\gamma + \frac{C(\gamma)}{\theta_1} \right)  \mathbb{E}[\| x_k^\tau - x_0^\tau \|^2]  \\
        &+ \left( \frac{3 d}{\ell b} + 2 + 3d \right)L^2 d \gamma \beta_\tau^2.  \\ %
        \end{aligned}
    \end{equation*}
Due to the choice of $\gamma$, we have that $\gamma L \leq \frac{1}{4}$. Thus,
\begin{equation*}
    \begin{aligned}
        \mathbb{E}[F(x_{k + 1}^\tau) ]&\leq \mathbb{E}[F(x_k^\tau)] - \frac{\gamma}{4} \mathbb{E}[\| \mathcal{G}_\gamma(x_k^\tau) \|^2] \\
        &- \frac{C(\gamma)}{1 + \theta_1} \mathbb{E} \left[ \| x_{k + 1}^\tau - x_0^\tau \|^2  \right] + \left(\frac{6d}{\ell b}L^2\gamma + \frac{C(\gamma)}{\theta_1} \right)  \mathbb{E}[\| x_k^\tau - x_0^\tau \|^2]  \\
        &+ \left( \frac{3 d}{\ell b} + 2 + 3d \right)L^2 d \gamma \beta_\tau^2.
    \end{aligned}
\end{equation*}
Using the RPL property of $F$ (Assumption \ref{ass:pl}),
\begin{equation*}
\begin{aligned}
\mathbb{E}\left[F\left(x_{k+1}^{\tau}\right)\right] 
&\leq \mathbb{E}[F(x_k^\tau)] - \frac{\gamma\mu}{2} \left(\mathbb{E}[F(x_k^\tau)] - \min F\right) \\
        &\qquad- \frac{C(\gamma)}{1 + \theta_1} \mathbb{E} \left[ \| x_{k + 1}^\tau - x_0^\tau \|^2  \right] + \left(\frac{6d}{\ell b}L^2\gamma + \frac{C(\gamma)}{\theta_1} \right)  \mathbb{E}[\| x_k^\tau - x_0^\tau \|^2] \\
        &\qquad + \left( \frac{3 d}{\ell b} + 2 + 3d \right)L^2 d \gamma \beta_\tau^2. 
\end{aligned}    
\end{equation*}
Adding and subtracting $\min F$, we have
\begin{equation*}%
\begin{aligned}
\mathbb{E}\left[F\left(x_{k+1}^{\tau}\right) - \min F\right] 
&\leq \left(1 - \frac{\gamma\mu}{2}\right) \mathbb{E}\left[F(x_k^\tau) - \min F\right] \\
        &\qquad- \frac{C(\gamma)}{1 + \theta_1} \mathbb{E} \left[ \| x_{k + 1}^\tau - x_0^\tau \|^2  \right] + \left(\frac{6d}{\ell b}L^2\gamma + \frac{C(\gamma)}{\theta_1} \right)  \mathbb{E}[\| x_k^\tau - x_0^\tau \|^2]  \\
        &\qquad + \left( \frac{3 d}{\ell b} + 2 + 3d \right)L^2 d \gamma \beta_\tau^2. 
\end{aligned}    
\end{equation*}
Let $\alpha = \left( 1 -  \frac{\gamma \mu}{2} \right)$ and $\theta_1 = 2k + 1$. We obtain that,%
\begin{equation*}
    \begin{aligned}
        \frac{\mathbb{E}\left[F\left(x_{k+1}^{\tau}\right) - \min F\right]}{\alpha^{k+1}}
        &\leq \frac{\mathbb{E}\left[F(x_k^\tau) - \min F\right]}{\alpha^{k}} \\
        &- \frac{1}{\alpha^{k+1}} \left[\frac{C(\gamma)}{2k + 2} \mathbb{E} \left[ \| x_{k + 1}^\tau - x_0^\tau \|^2  \right] - \left(\frac{6d}{\ell b}L^2\gamma + \frac{C(\gamma)}{2k + 1} \right)  \mathbb{E}[\| x_k^\tau - x_0^\tau \|^2] \right]  \\
        &+ \frac{1}{\alpha^{k+1}} \left( \frac{3 d}{\ell b} + 2 + 3d \right)L^2 d \gamma \beta_\tau^2.%
    \end{aligned}
\end{equation*}
Now, summing over $k$ from $0$ to $m-1$, we have
\begin{equation*}
    \begin{aligned}
        \mathbb{E}\left[F\left(x_{m}^{\tau}\right) - \min F\right]
        &\leq \alpha^m \mathbb{E}\left[F(x_0^\tau) - \min F\right] + \alpha^m \sum_{k=0}^{m-1} \frac{1}{\alpha^{k+1}} \left( \frac{3 d}{\ell b} + 2 + 3d \right)L^2 d \gamma \beta_\tau^2 \\
        &- \alpha^m  \mathbb{E} \left[ \sum_{k=0}^{m-1}\frac{C(\gamma)}{2k + 2}  \frac{1}{\alpha^{k+1}} \| x_{k + 1}^\tau - x_0^\tau \|^2   - \sum_{k=0}^{m-1} \frac{1}{\alpha^{k+1}} \left(\frac{6d}{\ell b}L^2\gamma + \frac{C(\gamma)}{2k + 1} \right)  \| x_k^\tau - x_0^\tau \|^2 \right]  \\
        &= \alpha^m \mathbb{E}\left[F(x_0^\tau) - \min F\right] + \frac{1-\alpha^m}{1-\alpha} \left( \frac{3 d}{\ell b} + 2 + 3d \right)L^2 d \gamma \beta_\tau^2 \\
        &- \alpha^m  \mathbb{E} \left[ \sum_{k=0}^{m-1}\frac{C(\gamma)}{2k + 2}  \frac{1}{\alpha^{k+1}} \| x_{k + 1}^\tau - x_0^\tau \|^2   - \sum_{k=1}^{m-1} \frac{1}{\alpha^{k+1}} \left(\frac{6d}{\ell b}L^2\gamma + \frac{C(\gamma)}{2k + 1} \right)  \| x_k^\tau - x_0^\tau \|^2 \right]  \\
        &= \alpha^m \mathbb{E}\left[F(x_0^\tau) - \min F\right] + \frac{1-\alpha^m}{1-\alpha} \left( \frac{3 d}{\ell b} + 2 + 3d \right)L^2 d \gamma \beta_\tau^2  \\
        &- \alpha^m  \mathbb{E} \left[ \sum_{k=0}^{m-2}  \frac{1}{\alpha^{k+2}} \left( \left(\frac{1}{2\gamma} - \frac{L}{2} \right)\left(\frac{\alpha}{2k + 2} - \frac{1}{2k + 3}\right) - \frac{6d}{\ell b}L^2\gamma \right)\| x_{k + 1}^\tau - x_0^\tau \|^2   \right].        
    \end{aligned}
\end{equation*}
Let $\displaystyle \theta = 1 - \frac{\mu \gamma}{2} - \mu\gamma m$. Due to the choice of the stepsize $\gamma < \min\left\{\frac{1}{4L}, \frac{2}{\mu}\frac{1}{(2m+1)},\frac{\sqrt{b\theta}}{10mL}\sqrt{\frac{\ell}{d}}\right\}$, we have that $\theta >0$ and, for every  $k \in \{0, 1, \cdots, m-1\}$,
\begin{equation*}
     \left( \left(\frac{1}{2\gamma} - \frac{L}{2} \right)\left(\frac{\alpha}{2k + 2} - \frac{1}{2k + 3}\right) - \frac{6d}{\ell b}L^2\gamma \right) \geq 0.
\end{equation*}
Indeed, since $\gamma < \frac{2}{\mu}\frac{1}{(2m+1)}$, we have that for all $k \in \{0, 1, \cdots, m-1\}$,
\begin{equation*}
\begin{aligned}
    \frac{6d}{\ell b}L^2\gamma &\leq \left(\frac{1}{2\gamma} - \frac{L}{2} \right)\left(\frac{\alpha}{2k + 2} - \frac{1}{2k +3}\right)\\
    &\leq\left(\frac{1}{2\gamma} - \frac{L}{2} \right)\left(\frac{\alpha}{2m} - \frac{1}{2m + 1}\right) \\
    &= \frac{1}{\gamma} \left(\frac{1}{2} - \frac{L\gamma}{2} \right) \frac{\alpha + 2m(\alpha -1 )}{2m(2m+1)} .
\end{aligned}    
\end{equation*}
Since $\gamma L \leq \frac{1}{4}$, we have
\begin{equation*}
\begin{aligned}
    \frac{6d}{\ell b}L^2\gamma &\leq \frac{3}{8\gamma} \frac{\theta}{6 m^2}. %
\end{aligned}    
\end{equation*}
Notice that this inequality is true since $\gamma^2 \leq \frac{b\theta}{100m^2L^2}\frac{\ell}{d}$.
Thus, we have
\begin{equation*}
    \begin{aligned}
        \mathbb{E}\left[F\left(x_{m}^{\tau}\right) - \min F\right] &\leq \alpha^m \mathbb{E}\left[F(x_0^\tau) - \min F\right] + \frac{1-\alpha^m}{1-\alpha} \left( \frac{3 d}{\ell b} + 2 + 3d \right)L^2 d \gamma \beta_\tau^2.%
    \end{aligned}
\end{equation*}
Using the facts that $x_{m}^{\tau} = x_{0}^{\tau+1}$, we can write
\begin{equation}
    \mathbb{E}\left[F\left(x_{0}^{\tau+1}\right) - \min F\right] \leq \alpha^m \mathbb{E}\left[F(x_0^\tau) - \min F\right] + \frac{1-\alpha^m}{1-\alpha} \left( \frac{3 d}{\ell b} + 2 + 3d \right)L^2 d \gamma \beta_\tau^2.%
\end{equation}
Let $\bar{\alpha} := \alpha^{m}$ and $\Psi_\tau := \frac{\mathbb{E}\left[F(x_0^\tau) - \min F\right]}{\bar{\alpha}^\tau}$. Then, multiplying both sides by $\frac{1}{\bar{\alpha}^{\tau + 1}}$, we get
\begin{equation*}
    \Psi_{\tau + 1} \leq \Psi_\tau  + \frac{1-\alpha^m}{1-\alpha} \left( \frac{3 d}{\ell b} + 2 + 3d \right)L^2 d \gamma \frac{\beta_\tau^2}{\bar{\alpha}^{\tau + 1}}.%
\end{equation*}
Now, summing for $i$ from $0$ to $\tau$, we get
\begin{equation*}
    \Psi_{\tau + 1} \leq \Psi_0  + \frac{1-\alpha^m}{1-\alpha} \left( \frac{3 d}{\ell b} + 2 + 3d \right)L^2 d \gamma \sum\limits_{i = 0}^\tau \frac{\beta_i^2}{\alpha^{(i + 1)m}}.%
\end{equation*}
Multiplying by $\alpha^{(\tau + 1)m}$, we get the claim
\begin{equation*}
    \begin{aligned}
    \mathbb{E}[F(x_0^{\tau + 1}) - \min F] &\leq \alpha^{(\tau + 1)m} [F(x_0^{0}) - \min F]\\
    &+ \alpha^{(\tau + 1)m}\frac{1-\alpha^m}{1-\alpha} \left( \frac{3 d}{\ell b} + 2 + 3d \right)L^2 d \gamma \sum\limits_{i = 0}^\tau \frac{\beta_i^2}{\alpha^{(i + 1)m}}.        
    \end{aligned}
\end{equation*}

\subsection*{Proof of Corollary \ref{cor:cor2}}

By Theorem \ref{thm:pl}, we have
\begin{equation*}
    \begin{aligned}
    \mathbb{E}\left[F\left(x_{0}^{\tau+1}\right) - \min F\right] &\leq \alpha^{m(\tau+1)}\left[F(x_0) - \min F\right]  \\
    &\qquad + \alpha^{m(\tau+1)}\frac{1-\alpha^m}{1-\alpha}\gamma\left[\frac{3L^2 d^2}{\ell}  + \frac{L^2 d}{2} \right]\sum_{i=0}^{\tau}\frac{\beta_i^2}{\alpha^{m(i + 1)}},
    \end{aligned} 
\end{equation*}
where $\alpha = 1 - \frac{\gamma \mu}{2}$. To prove point (i), we choose $\beta_\tau = \alpha^{m(\tau + 1)/2} \eta_\tau$ with $\sum\limits_{i =0}^{+\infty} \eta^2_i \leq \tilde{C} < + \infty$. Such a choice yields 
\begin{equation*}
    \begin{aligned}
    \mathbb{E}\left[F\left(x_{0}^{\tau+1}\right) - \min F\right] &\leq \alpha^{m(\tau+1)}\left[F(x_0) - \min F\right]  \\
    &+ \alpha^{m(\tau+1)}\frac{1-\alpha^m}{1-\alpha}\gamma\left[\frac{3L^2 d^2}{\ell}  + \frac{L^2 d}{2} \right]  \sum_{i=0}^{\tau} \eta_i^2 %
    \end{aligned} 
\end{equation*}
Since $\sum_{i=0}^{\tau} \alpha^{(\tau + 1)m} = \frac{1 - \alpha^{(\tau + 1)m}}{1 - \alpha^{m}}$ and $\sum\limits_{i =0}^{+\infty} \eta^2_i \leq \tilde{C}$, we have 
\begin{equation*}
    \begin{aligned}
    \mathbb{E}\left[F\left(x_{0}^{\tau+1}\right) - \min F\right] &\leq \alpha^{m(\tau+1)}\left[F(x_0) - \min F\right]  \\
    &+ \alpha^{m(\tau+1)}\frac{1-\alpha^m}{1-\alpha} \frac{1 - \alpha^{m(\tau + 1)}}{1 - \alpha^m} \gamma\left[\frac{3L^2 d^2}{\ell}  + \frac{L^2 d}{2} \right] \tilde{C}\\
    &=\alpha^{m(\tau+1)}\left[F(x_0) - \min F\right]  \\
    &+ \alpha^{m(\tau+1)} \frac{1 - \alpha^{m(\tau + 1)}}{1 - \alpha} \gamma\left[\frac{3L^2 d^2}{\ell}  + \frac{L^2 d}{2} \right] \tilde{C}. %
    \end{aligned} 
\end{equation*}
Observing that $1 - \alpha^{m(\tau + 1)} \leq 1$ and due to the definition of $\alpha$, we get the first claim i.e.
\begin{equation*}
    \begin{aligned}
    \mathbb{E}\left[F\left(x_{0}^{\tau+1}\right) - \min F\right] &\leq \alpha^{m(\tau+1)}\left[F(x_0) - \min F\right] +\frac{\alpha^{m(\tau+1)}}{1 - \alpha} \gamma\left[\frac{3L^2 d^2}{\ell}  + \frac{L^2 d}{2} \right] \tilde{C}\\
    &=\alpha^{m(\tau+1)}\left[F(x_0) - \min F\right] + \alpha^{m(\tau+1)}  \frac{2}{\mu}\left[\frac{3L^2 d^2}{\ell}  + \frac{L^2 d}{2} \right]\tilde{C}%
    \end{aligned} 
\end{equation*}
To prove point (ii) of the Corollary, we fix $\beta_\tau = \beta$. Thus, we get
\begin{equation*}
    \begin{aligned}
    \mathbb{E}\left[F\left(x_{0}^{\tau+1}\right) - \min F\right] &\leq \alpha^{m(\tau+1)}\left[F(x_0) - \min F\right] \\
    &\qquad + \left(\alpha^{m(\tau+1)}\frac{1-\alpha^m}{1-\alpha}\gamma\left[\frac{3L^2 d^2}{\ell}  + \frac{L^2 d}{2} \right]  \sum_{i=0}^{\tau}\frac{1}{\alpha^{(i+1)m}} \right) \beta^2 .
    \end{aligned} 
\end{equation*}
Again, since $\alpha^{(\tau + 1)m}\sum_{i=0}^{\tau}\frac{1}{\alpha^{(i+1)m}} = \frac{1 - \alpha^{(\tau + 1)m}}{1 - \alpha^{m}}$, we conclude the proof
\begin{equation*}
    \begin{aligned}
    \mathbb{E}\left[F\left(x_{0}^{\tau+1}\right) - \min F\right] &\leq \alpha^{m(\tau+1)}\left[F(x_0) - \min F\right] \\
    &+ \left(\frac{1-\alpha^m}{1-\alpha} \frac{1 - \alpha^{m(\tau+1)}}{1 - \alpha^{m}} \gamma\left[\frac{3L^2 d^2}{\ell}  + \frac{L^2 d}{2} \right]   \right) \beta^2\\
    &= \alpha^{m(\tau+1)}\left[F(x_0) - \min F\right]\\
    &+ \left( \frac{1 - \alpha^{m(\tau+1)}}{1 - \alpha} \gamma\left[\frac{3L^2 d^2}{\ell}  + \frac{L^2 d}{2} \right]   \right) \beta^2.\\
    \end{aligned} 
\end{equation*}
Since $1 - \alpha^{m(\tau+1)} \leq 1$,
\begin{equation}\label{eqn:cor_pl_ii}
    \begin{aligned}
    \mathbb{E}\left[F\left(x_{0}^{\tau+1}\right) - \min F\right] &\leq \alpha^{m(\tau+1)}\left[F(x_0) - \min F\right] + \left( \frac{1}{1 - \alpha} \gamma\left[\frac{3L^2 d^2}{\ell}  + \frac{L^2 d}{2} \right]   \right) \beta^2\\
    &=\alpha^{m(\tau+1)}\left[F(x_0) - \min F\right] + \underbrace{  \frac{2}{\mu} \left[\frac{3L^2 d^2}{\ell}  + \frac{L^2 d}{2} \right] }_{=:\bar{C}} \beta^2.
    \end{aligned} 
\end{equation}
We prove point (iii) starting from point (ii) of the Corollary. Let $\varepsilon \in (0, 1)$, we choose $\beta_\tau = \beta >0$ and we upper-bound eq. \eqref{eqn:cor_pl_ii} with $\varepsilon$ i.e.
\begin{equation*}
    \begin{aligned}
    \mathbb{E}\left[F\left(x_{0}^{\tau+1}\right) - \min F\right] &\leq \alpha^{m(\tau+1)}\left[F(x_0) - \min F\right] + \bar{C} \beta^2 \leq \varepsilon.
    \end{aligned} 
\end{equation*}
Let $\beta = \sqrt{\frac{\varepsilon}{2\bar{C}}}$ and recalling that $\alpha = \left( 1 - \frac{\gamma \mu}{2} \right) < 1$, we have that for $T \geq \mathcal{O}\left(\frac{1}{\gamma \mu m} \log \frac{1}{\varepsilon} \right)$,
\begin{equation*}
    \mathbb{E}[F(x_0^T) - \min F] \leq \varepsilon
\end{equation*}
and, considering that an outer iteration requires $n(d + 1) + 2mb(\ell + 1)$ function evaluations, the complexity is
\begin{equation*}
    \begin{aligned}
    \mathcal{O} \left(\frac{nd}{\gamma \mu m} \log \frac{1}{\varepsilon} + \frac{b \ell}{\gamma \mu} \log \frac{1}{\varepsilon} \right).
    \end{aligned} 
\end{equation*}

\end{document}